\newlength{\vslength}
\newcommand{\ie}{{\it i.e.}}
\newcommand{\cf}{{\it c.f.}}
\newcommand{\eg}{{\it e.g.}}
\newcommand{\ea}{{\it et al.}}
\newcommand{\lhs}{{\it l.h.s.}}
\newcommand{\rhs}{{\it r.h.s.}}
\newcommand{\iid}{{\it i.i.d.}}
\newcommand{\wrt}{{\it w.r.t.}}
\newcommand{\GG}{{\mathbb G}}
\newcommand{\PP}{{\mathbb P}}
\newcommand{\RR}{{\mathbb R}}
\newcommand{\scrB}{{\mathscr B}}
\newcommand{\scrD}{{\mathscr D}}
\newcommand{\scrF}{{\mathscr F}}
\newcommand{\scrL}{{\mathscr L}}
\newcommand{\scrP}{{\mathscr P}}
\newcommand{\scrS}{{\mathscr S}}
\newcommand{\scrX}{{\mathscr X}}
\newcommand{\score}{{\dot{\ell}}}
\newcommand{\FI}{{I}}
\newcommand{\effscore}{\tilde{\ell}}
\newcommand{\effFI}{{\tilde{I}}}
\newcommand{\effDelta}{{\tilde{\Delta}}}
\newcommand{\al}{{\alpha}}
\newcommand{\ep}{{\epsilon}}
\newcommand{\tht}{{\theta}}
\newcommand{\Tht}{{\Theta}}
\newcommand{\sample}{{\underline{X}}}
\newcommand{\Exp}{\mathrm{Exp}^{+}}
\newcommand{\NExp}{\mathrm{Exp}^{-}}
\renewcommand{\emptyset}{{\varnothing}}
\newcommand{\ft}[2]{{\textstyle{\frac{#1}{#2}}}}
\newcommand{\convas}[1]%
  {{\mathrel{\,\stackrel{#1-a.s.}{\longrightarrow}\,}}}
\newcommand{\convprob}[1]%
  {{\mathrel{\,\stackrel{#1}{\longrightarrow}\,}}}
\newcommand{\convweak}[1]%
  {{\mathrel{\,\stackrel{#1}{\rightsquigarrow}\,}}}
\newcommand{\twobytwo}[4]%
  {\left(\begin{array}{cc} #1 & #2 \\ #3 & #4 \end{array}\right)}
\newcommand{\twovec}[2]%
  {\left({\begin{array}{c} #1\\#2 \end{array}}\right)}
\newtheorem{theorem}{Theorem}[section]
\newtheorem{lemma}[theorem]{Lemma}
\newtheorem{corollary}[theorem]{Corollary}
\newtheorem{conjecture}[theorem]{Conjecture}
\newenvironment{definition}[0]
  {\begin{trivlist}\item[]
    \stepcounter{theorem}
    {\bfseries{Definition }\thetheorem.}~\ignorespaces}
  {{\hfill$\Box$}\end{trivlist}}
\newenvironment{example}[0]
  {\begin{trivlist}\item[]
    {\bfseries{Example }}~\ignorespaces}
  {{\hfill$\Box$}\end{trivlist}}
\renewenvironment{proof}[0]
  {\begin{trivlist}\item[]
    \stepcounter{theorem}
    {\bfseries{Proof }}~\ignorespaces}
  {{\hfill$\Box$}\end{trivlist}}
\begin{document}

\thispagestyle{empty}

\title{Semiparametric posterior limits}
\author{B. J. K. Kleijn\\
  {\small\it Korteweg-de~Vries Institute for Mathematics,
    University of Amsterdam}}%
\date{May 2013}
\maketitle

\begin{abstract}
  We review the Bayesian theory of semiparametric inference
  following Bickel and Kleijn (2012) \cite{Bickel12} and
  Kleijn and Knapik (2013) \cite{Kleijn13}. After an overview
  of efficiency in parametric and semiparametric estimation
  problems, we consider the Bernstein-von~Mises theorem
  (see, \eg, Le~Cam and Yang (1990) \cite{LeCam90}) and
  generalize it to (LAN) regular and (LAE) irregular
  semiparametric estimation problems. We formulate a version
  of the semiparametric Bernstein-von~Mises theorem that
  does not depend on least-favourable submodels, thus bypassing
  the most restrictive condition in the presentation of
  \cite{Bickel12}. The results are applied to the (regular)
  estimation of the linear coefficient in partial linear regression
  (with a Gaussian nuisance prior) and of the kernel bandwidth
  in a model of normal location mixtures (with a Dirichlet nuisance
  prior), as well as the (irregular) estimation of the boundary
  of the support of a monotone family of densities (with a
  Gaussian nuisance prior). 
\end{abstract}

\maketitle

\tableofcontents


\section{Introduction}
\label{sec:intro} 

Consider estimation of a functional $\theta:\scrP
\rightarrow\RR^k$ on a dominated nonparametric model $\scrP$ with 
metric $g$, based on a sample $X_1,X_2,\ldots$, distributed \iid\
according to $P_0\in\scrP$. We introduce a prior $\Pi$ on $\scrP$
and consider the subsequent sequence of posteriors,
\begin{equation}
  \label{eq:posterior}
  \Pi\bigl(\,A\bigm|X_1,\ldots,X_n\bigr)= 
  {\displaystyle{\int_A\prod_{i=1}^n p(X_i)\,d\Pi(P)}} \biggm/
  {\displaystyle{\int_\scrP\prod_{i=1}^n p(X_i)\,d\Pi(P)}},
\end{equation}
where $A$ is any measurable model subset. Typically, optimal
(\eg\ minimax) nonparametric posterior rates of convergence
\cite{Ghosal00} are powers of $n$ (possibly modified by a
slowly varying function) that converge to zero more
slowly than the parametric $n^{-1/2}$-rate. 
Instances of inconsistency in nonparametric Bayesian statistics 
are numerous \cite{Freedman63, Diaconis86, Cox93,
Diaconis98, Freedman99} but practical sufficient conditions for
posterior consistency (Schwartz (1965) \cite{Schwartz65}) and
rates of convergence (Ghosal, Ghosh and van~der~Vaart (2000)
\cite{Ghosal00}, Shen and Wasserman (2001) \cite{Shen01}) are
well-known. Together, negative and positive results demonstrate
that the choice of a nonparametric prior is a sensitive one that
leaves room for unintended consequences unless due care is taken.

This lesson must also be taken seriously when one asks the question
whether the marginal posterior for the parameter of interest in
a semiparametric estimation problem displays
Bern\-stein-\-Von~Mises-type limiting behaviour. In this paper,
our primary goal is efficient estimation of smooth, real-valued
aspects of $P_0$: parametrize the model in terms of a finite-dimensional
{\em parameter of interest} $\tht\in\Tht$ and an infinite-dimensional
{\em nuisance parameter} $\eta\in H$: $\scrP=\{\,P_{\tht,\eta}\,:
\,\tht\in\Tht,\eta\in H\,\}$.
We look for general sufficient conditions on model and prior such
that the {\em marginal posterior for the parameter of interest}
satisfies,
\begin{equation}
  \label{eq:assertBvM}
  \sup_{B}\Bigl|\,
    \Pi\bigl(\,\sqrt{n}(\tht-\tht_0)\in B\bigm|X_1,\ldots,X_n\bigr)
      - N_{\effDelta_n,\effFI_{\tht_0,\eta_0}^{-1}}(B)
          \,\Bigr| \rightarrow 0,
\end{equation}
in $P_{\tht_0}$-probability, where,
\begin{equation}
  \label{eq:DefDeltaMS}
  \effDelta_n = \frac{1}{\sqrt{n}}\sum_{i=1}^n
    \effFI_{\tht_0,\eta_0}^{-1}\effscore_{\tht_0,\eta_0}(X_i),
\end{equation}
$\effscore_{\tht,\eta}$ is the efficient score function and 
$\effFI_{\tht,\eta}$ the (non-singular) efficient Fisher information
(for definitions, see subsection~\ref{sub:effsemi} below). Assertion
(\ref{eq:assertBvM}) (roughly) implies efficiency of point-estimators
like the posterior median, mode or mean
and justifies asymptotic identification of credible regions with
efficient confidence regions (see Section~\ref{sec:eff}).
From a practical point of view, the latter 
conclusion has an important implication: whereas, in many
semiparametric estimation problems, it is hard to calculate
optimal semiparametric confidence regions directly, simulation of
a large sample from the marginal posterior (\eg\ by MCMC techniques,
see Robert (2001) \cite{Robert01} and many others) is sometimes
comparatively straightforward.

Instances of the Bern\-stein-\-Von~Mises limit have been studied 
in various semiparametric models; we mention references of
a general nature and several model-specific discussions. The first
general reference in this area is Shen (2002) \cite{Shen02} with
application to partial linear regression, but his conditions appear
hard to verify in other examples. Castillo (2012) \cite{Castillo12}
is inspired by
and related to \cite{Shen02} and provides general conditions with
two applications. Cheng and Kosorok (2008) \cite{Cheng08} give a
general perspective too, proving weak convergence of the posterior
under sufficient conditions. Rivoirard and Rousseau (2009)
\cite{Rivoirard09} prove a version for linear functionals over the
model, using a class of nonparametric priors based on infinite-dimensional
exponential families on Sobolev and Besov spaces. Boucheron and
Gassiat (2009) \cite{Boucheron09} consider the Bernstein-\-Von~Mises
theorem for families of discrete distributions with Dirichlet
priors, motivated by information-theoretic questions. Johnstone
(2010) \cite{Johnstone10} studies various marginal posteriors
in the Gaussian sequence model, taking sieve-like limits of
finite-dimensional posteriors. De~Blasi and Hjort (2007,2009)
\cite{Deblasi07,Deblasi09} analyse partial likelihood and Bayesian
methods in Cox' proportional hazards model with a Beta process
prior for the cumulative baseline hazard. In Kruijer and
Rousseau (2013) \cite{Kruijer13}, Gaussian time series with
long-memory behaviour are analysed with an infinite-dimensional
version of the FEXP model, using families of priors defined on
approximating sieves. De~Jonge and van~Zanten (2013)
\cite{Dejonge13} consider Gaussian regression problems with
Gaussian priors to estimate the variance of the error and
Knapik, van~de~Vaart and van~Zanten (2011) \cite{Knapik11}
consider finite-dimensional marginals in Gaussian inverse
problems with Gaussian priors.

The field of semiparametric Bayesian statistics is relatively new
and the papers mentioned above explore a great variety of different
methods to arrive at the Bernstein-von~Mises limit. Many of
those methods are model-specific and do not lend themselves to
generalization (especially the Gaussian sequence model with a
Gaussian prior has received a very large amount of attention).
Questions remain and a coherent, unified point of view has not been
established. For that reason the perspective of paper does not
provide a comprehensive account of possible approaches to the
Bayesian semiparametric problem; instead it is based
primarily on the perspective of \cite{Bickel12,Kleijn13}.
We review the theory of efficient estimation in smooth parametric and
semiparametric models and discuss the derivation of the semiparametric
Bernstein-von~Mises theorem in locally asymptotically normal
\cite{Bickel12} and locally asymptotically exponential \cite{Kleijn13}
problems. To enhance applicability, a new version of the
regular semiparametric Bernstein-von~Mises theorem is formulated:
where, previously, the construction depended on the existence of
a smooth least-favourable submodel, the new version only requires
that the model permits a sequence of submodels that approximate
least-favourable directions in a suitable way (see
subsection~\ref{sub:approx}). Where proofs change, full details
are provided (see subsection~\ref{sub:ilan} and section~\ref{sec:proofs}).
Throughout, developments are related to the locally asymptotically
exponential case (for which proofs run largely analogously).

Every major step in the development is illustrated with three
running semiparametric examples: the first two, (regular) 
estimation of the linear coefficient in the partial linear regression
model \cite{Bickel12} and (irregular)
estimation of support boundary points for a family of
monotone densities \cite{Kleijn13}, are analysed in full detail.
The third is new and concerns (regular) estimation of kernel variance
in a normal location mixture model, but the discussion is not as
detailed and rigorous as that of the other two examples. 
Results are summarized in a general theorem and corollary (see
subsection~\ref{sub:mainthm}) and two model-specific Bernstein-von~Mises
theorems, for partial linear regression (see subsection~\ref{sub:plr}),
and support boundary estimation (see subsection~\ref{sub:sbe}).
For lack of rigorous (aspects of) proofs, estimation of kernel
variance in normal location mixtures is commented on in the
form of a conjecture (see subsection~\ref{sub:nlm}).

\subsection*{Notation and conventions}

The (frequentist) true distribution of the data is denoted $P_0$
and assumed to lie in $\scrP$, so that there
exist $\tht_0\in\Tht$, $\eta_0\in H$ such that
$P_0=P_{\tht_0,\eta_0}$. In regular problems, $\tht$ is localized
by introduction of $h = \sqrt{n}(\tht-\tht_0)$ with inverse
$\tht_n(h)=\tht_0+n^{-1/2}h$; in irregular problems we
follow analogous definitions with rate $n^{-1}$.
The (multivariate) normal distribution with mean $\mu$ and
covariance $\Sigma$ is denoted $N_{\mu,\Sigma}$. The location-scale
family associated with the exponential distribution is denoted by
 $\Exp_{\Delta,\lambda}$ and its negative version (supported on
a half-line extending to $-\infty$) by $\NExp_{\Delta,\lambda}$.
The expectation of a random variable $f$ with respect to a probability
measure $P$ is denoted $Pf$; the sample average of $g(X)$ is denoted
$\PP_ng(X) = (1/n)\sum_{i=1}^ng(X_i)$ and 
$\GG_ng(X) = n^{1/2}(\PP_ng(X)-Pg(X))$ (for other conventions
and nomenclature customary in empirical process theory, see
\cite{vdVaart96}). If $f$ is a integrable random variable
and $h_n$ is stochastic, $P_{\tht_n(h_n),\eta}^nf$ 
denotes the integral $\int f(\omega)\,(dP_{\tht_n(h_n(\omega)),\eta}^n/dP_0^n)
(\omega)\,dP_0^n(\omega)$. The Hellinger distance between $P$ and
$P'$ is denoted $H(P,P')$ and induces a metric $d_H$ on the space
of nuisance parameters $H$ by $d_H(\eta,\eta')=
H(P_{\tht_0,\eta},P_{\tht_0,\eta'})$, for all $\eta,\eta'\in H$.
We endow the model with the Borel $\sigma$-algebra generated by the
Hellinger topology and refer to \cite{Ghosal00} regarding issues of
measurability.


\section{Efficiency}
\label{sec:eff} 

Perhaps the most intuitive way to express statistical inference
is formulation in terms of (frequentist) \emph{confidence sets}
or (Bayesian) \emph{credible sets}. Typically, confidence sets are 
defined as neighbourhoods of an estimator with a certain coverage
probability, based on the quantiles of its \emph{sampling distribution}.
Credible sets represent the same concept in Bayesian statistics
and are defined with the posterior in the role of the sampling
distribution. In what follows we shall not be too strict
in Bayesian, subjectivist orthodoxy and interpret the posterior as 
a frequentist device, asking the natural question how its credible
sets compare to confidence sets. Since confidence sets and credible
sets are conceptually so close, could it be that they are close
also mathematically? To answer this question, we briefly review the
modern theory of (point-)estimation of smooth parameters to arrive
at a notion of asymptotic inferential optimality and we discuss the
Bernstein-von~Mises theorem (theorem~\ref{thm:truebvm} below) which
demonstrates asymptotic equivalence of credible sets and optimal
(or \emph{efficient}) confidence sets.

\subsection{Efficiency in parametric models}
\label{sub:effpara}

The concept of efficiency has its origin in Fisher's 1920's
claim of asymptotic optimality of the maximum-likelihood estimator in
differentiable parametric models (Fisher (1959) \cite{Fisher59}
and Cram\`er (1946) \cite{Cramer46}). Here, optimality of ML estimates
means that they are consistent, achieve $n^{-1/2}$ rate of convergence
and possess an asymptotic sampling distribution of minimal variance.
To illustrate, consider the following classical result from
$M$-estimation.
\begin{theorem}
\label{thm:omle}
Let $\Tht$ be open in $\RR^k$ and assume that $\scrP$ is characterized by
densities $p_{\tht}:\scrX\rightarrow\RR$ such that
$\tht\mapsto\log p_\tht(x)$ is differentiable at $\tht_0$ for all
$x\in\scrX$, with derivative $\score_\tht(x)$. Assume that there exists
a function $\score:\scrX\rightarrow\RR$ such that $P_0\score^2<\infty$
and
\[
  \bigl|\, \log p_{\tht_1}(x)-\log p_{\tht_2}(x)\, \bigr|
  \leq \score(x)\,\|\tht_1-\tht_2\|,
\]
for all $\tht_1,\tht_2$ in an open neighbourhood of $\tht_0$.
Furthermore, assume that $\tht\mapsto P_{\tht_0}\log p_{\tht}$ has a
second-order Taylor expansion around $\tht_0$ of the form,
\[
  P_{\tht_0}\log p_{\tht} = P_{\tht_0}\log p_{\tht_0}
    +\ft12(\tht-\tht_0)^TI_{\tht_0}(\tht-\tht_0) + o(\|\tht-\tht_0\|^2),
\]
with non-singular $I_{\tht_0}$. If $(\hat{\tht}_n)$ are (near-)maximizers
of the likelihood such that $\hat{\tht}_n\convprob{\tht_0}\tht_0$, then
the estimator sequence is asymptotically linear,
\[
  n^{1/2}(\hat{\tht}_n-\tht_0)
    = n^{-1/2}\sum_{i=1}^n \FI_{\tht_0}^{-1}\score_{\tht_0}(X_i)
      + o_{P_{\tht_0}}(1),
\]
in particular, $n^{1/2}(\hat{\tht}_n-\tht_0)\convweak{\tht_0}
N(0,\FI_{\tht_0}^{-1})$.
\end{theorem}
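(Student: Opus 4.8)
The plan is to recognise $\htht_n$ as a (near-)maximizer of the random criterion $\tht\mapsto M_n(\tht)=\PP_n\log p_\tht$ and to pin down its localized limit by an argmax argument. Write $m_\tht=\log p_\tht$ and $M(\tht)=P_{\tht_0}m_\tht$. By the information inequality $M$ attains its maximum at $\tht_0$, so (with the sign of the quadratic form fixed accordingly) the hypothesized second-order expansion reads $M(\tht)-M(\tht_0)=-\thalf(\tht-\tht_0)^T\FI_{\tht_0}(\tht-\tht_0)+o(\|\tht-\tht_0\|^2)$ with positive-definite $\FI_{\tht_0}$. Introducing the local coordinate $h=\sqrt n(\tht-\tht_0)$ and writing $\hat h_n=\sqrt n(\htht_n-\tht_0)$, the goal becomes a locally asymptotically normal expansion of the localized criterion,
\[
  n\bigl(M_n(\tht_0+h/\sqrt n)-M_n(\tht_0)\bigr)
    = h^T\GG_n\score_{\tht_0}-\thalf h^T\FI_{\tht_0}h+o_{P_{\tht_0}}(1),
\]
uniformly for $h$ in compacta, from which the maximizer can be read off by completing the square.

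First I would establish the empirical-process expansion that linearizes $M_n$ near $\tht_0$. The Lipschitz hypothesis with $P_0\score^2<\infty$ renders $\{m_\tht-m_{\tht_0}:\|\tht-\tht_0\|<\delta\}$ a $P_{\tht_0}$-Donsker class with square-integrable envelope of order $\delta\,\score$; combined with the pointwise differentiability $m_\tht(x)=m_{\tht_0}(x)+(\tht-\tht_0)^T\score_{\tht_0}(x)+o(\|\tht-\tht_0\|)$, this controls the modulus of continuity of the centred process and yields
\[
  \GG_n\bigl(m_{\tht_n}-m_{\tht_0}\bigr)
    =(\tht_n-\tht_0)^T\GG_n\score_{\tht_0}
      +o_{P_{\tht_0}}\bigl(\|\tht_n-\tht_0\|+n^{-1/2}\bigr)
\]
for every $\tht_n\convprob{\tht_0}\tht_0$, via a maximal inequality for Lipschitz-indexed empirical processes (asymptotic equicontinuity on the Donsker class). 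This stochastic-equicontinuity step is where I expect the main difficulty to lie: it is the only place the Lipschitz bound is genuinely used, and essentially all of the empirical-process machinery is concentrated here.

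To conclude, I would combine the deterministic expansion of $M$ with the stochastic expansion of $M_n-M$ (recalling $\PP_n=P_{\tht_0}+n^{-1/2}\GG_n$ and $P_{\tht_0}\score_{\tht_0}=0$) to obtain the displayed localized quadratic expansion. A preliminary rate argument, using consistency together with the negative-definite quadratic shape, first upgrades $\htht_n\convprob{\tht_0}\tht_0$ to $\hat h_n=O_{P_{\tht_0}}(1)$; the argmax theorem (continuity of the argmax functional applied to the quadratic limit process) then identifies the near-maximizer as $\hat h_n=\FI_{\tht_0}^{-1}\GG_n\score_{\tht_0}+o_{P_{\tht_0}}(1)$, which is exactly the asserted asymptotic linearity since $\GG_n\score_{\tht_0}=n^{-1/2}\sum_{i=1}^n\score_{\tht_0}(X_i)$. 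Finally, identifying the Taylor coefficient $\FI_{\tht_0}$ with the score covariance $P_{\tht_0}\score_{\tht_0}\score_{\tht_0}^T$, the multivariate central limit theorem gives $\GG_n\score_{\tht_0}\convweak{\tht_0}N(0,\FI_{\tht_0})$, whence $n^{1/2}(\htht_n-\tht_0)\convweak{\tht_0}N(0,\FI_{\tht_0}^{-1})$.
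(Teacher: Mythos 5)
Your proposal is correct and takes essentially the same route as the paper's proof, which is given by reference to Theorem~5.23 of van~der~Vaart (1998): an empirical-process linearization of the localized criterion obtained from the Lipschitz/Donsker hypothesis (vdV's Lemma~19.31), a preliminary upgrade of consistency to $\sqrt{n}(\htht_n-\tht_0)=O_{P_{\tht_0}}(1)$ (vdV's Theorem~5.52), and a quadratic-comparison/argmax step identifying $\sqrt{n}(\htht_n-\tht_0)=\FI_{\tht_0}^{-1}\GG_n\score_{\tht_0}+o_{P_{\tht_0}}(1)$. Your explicit repair of the sign of the quadratic term (which must be negative definite for $\tht_0$ to maximize $\tht\mapsto P_{\tht_0}\log p_\tht$) and the identification of the Taylor coefficient with the score covariance match the intended reading of the statement.
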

For a proof, see theorem~5.23 in van~der~Vaart (1998)
\cite{vdVaart98}. Associated \emph{asymptotic} confidence sets
are the approximate confidence sets one obtains upon approximation
of sampling distributions by the limit distribution. Denoting
quantiles of the $\chi^2$-distribution with $k$ degrees of freedom by
$\chi_{k,\al}^2$, we find that ellipsoids of the form,
\begin{equation}
  \label{eq:EffCI}
  C_\al(X_1,\ldots,X_n)
    = \bigl\{ \tht\in\Tht\,:\, n(\tht-\hat{\tht}_n)^T
      \FI_{\hat{\tht}_n}(\tht-\hat{\tht}_n)\leq \chi_{k,\al}^2\bigr\},
\end{equation}
have coverage probabilities converging to $1-\al$ and are therefore
asymptotic confidence sets.

Theorem~\ref{thm:omle} requires a rather large number of smoothness
properties of the model which are there to guarantee that the ML 
estimator displays \emph{regularity}.
The prominence of regularity in the context of optimality questions 
was not fully appreciated until 1951, when Hodges revealed a 
phenomenon now known as \emph{superefficiency} through formulation
of shrinkage: the behaviour of estimators like
$\hat{\tht}_n$ above can be adapted around certain points in the
parameter space to outperform the MLE and other estimators like
it asymptotically, while doing equally well for all other points.
Superefficiency indicated that Fisher's 1920's claim was false
without further refinement and that a comprehensive understanding of
optimality in differentiable estimation problems remained elusive.

To resolve the issue and arrive at a sound theory of asymptotic
optimality of estimation in differentiable models, two concepts
were introduced, the first being a concise notion of smoothness. (In
the following we assume that the sample is {\it i.i.d.}, although
usually the definition is extended to more general forms of data.)  
\begin{definition}\label{def:LAN}
{\it (Local asymptotic normality (LAN), Le~Cam (1960) \cite{LeCam60})}\\
Let $\Tht\subset\RR^k$ be open, parametrizing a model
$\scrP=\{P_\tht:\tht\in\Tht\}$ that is dominated by a $\sigma$-finite
measure with densities $p_\tht$. The model is said to be \emph{locally
asymptotically normal (LAN)} at $\tht_0$ if, for any converging
sequence $h_n\rightarrow h$:
\begin{equation}
  \label{eq:LAN}
  \log\prod_{i=1}^n\frac{p_{\tht_0+n^{-1/2}h_n}}{p_{\tht_0}}(X_i)
    = h^T\Gamma_{n,\tht_0} - \ft12\,h^T \FI_{\tht_0}h + o_{P_{\tht_0}}(1),
\end{equation}
for random vectors $\Gamma_{n,\tht_0}$ such that $\Gamma_{n,\tht_0}
\convweak{\tht_0}N_k(0,\FI_{\tht_0})$.
\end{definition}
Differentiability of the log-density $\tht\mapsto\log p_\tht(x)$
at $\tht_0$ for every $x$ and continuity of the associated Fisher
information (see, for instance, lemma~7.6 in \cite{vdVaart98})
imply that the model is LAN at $\tht_0$
with $\Gamma_{n,\tht_0} = n^{-1/2}\sum_{i=1}^n \score_{\tht_0}(X_i)$.
But local asymptotic normality can be achieved under a weaker condition.
\begin{definition}{\it (Differentiability in quadratic mean (DQM))}\\
Let $\Tht$ be an open subset of $\RR^k$. A model
$\scrP=\{P_\tht:\tht\in\Tht\}$ that is dominated by a $\sigma$-finite
measure $\mu$ with densities $p_\tht$ is said to be \emph{differentiable
in quadratic mean (DQM)} at $\tht_0\in\Tht$, if there exists a score
function $\score_{\tht_0}\in L_2(P_{\tht_0})$ such that:
\[
  \int\Bigl( p_{\tht_0+h}^{1/2}-p_{\tht_0}^{1/2}
    - \ft12 h^T\,\score_{\tht_0}\,p_{\tht_0}^{1/2} \Bigr)^2\,d\mu 
    = o\bigl(\|h\|^2\bigr),
\]
as $h\rightarrow0$. 
\end{definition}
Theorem~75.9 in Strasser (1985) \cite{Strasser85} demonstrates
equivalence of the DQM and LAN properties. In the proof of the
semiparametric Bernstein-von~Mises theorem below, we use a
smoothness property that is slightly stronger.
\begin{definition}\label{def:slan}
{\it (Stochastic LAN (sLAN))}\\
We say that a parametric model $\scrP$ is \emph{stochastically
LAN} at $\tht_0$, if the LAN property of definition~\ref{def:LAN}
is satisfied for every random sequence $(h_n)$ that is bounded
in probability, \ie\ for all $h_n=O_{P_0}(1)$:
\begin{equation}
  \label{eq:slan}
  \log\prod_{i=1}^n\frac{p_{\tht_0+n^{1/2}h_n}}{p_{\tht_0}}(X_i)
    - h_n^T\Gamma_{n,\tht_0} - \ft12\,h_n^T \FI_{\tht_0}h_n = o_{P_{\tht_0}}(1),
\end{equation}
for random vectors $\Gamma_{n,\tht_0}$ such that $\Gamma_{n,\tht_0}
\convweak{\tht_0}N_k(0,\FI_{\tht_0})$.
\end{definition}

The second concept is a property that characterizes the class of
estimators over which optimality is achieved (in particular excluding
Hodges' shrinkage estimators and other examples of superefficiency,
as becomes clear below). To prepare the definition heuristically, 
note that, given Hodges' counterexample, it is not enough to have
estimators with pointwise convergence to limit laws; we must
restrict the behaviour of estimators over ($n^{-1/2}$-)neighbourhoods
rather than allow the type of wild variations that make superefficiency
possible.
\begin{definition}\label{def:regular}{\it (Regularity of estimation)}\\
Let $\Tht\subset\RR^k$ be open. An estimator sequence $(T_n)$ for
the parameter $\tht$ is said to be \emph{regular} at $\tht$ if there
exists a $L_{\tht}$ such that for all $h\in\RR^k$,
\begin{equation}
  \label{eq:regularity}
  n^{1/2}\Bigl( T_n - \bigl(\tht+n^{-1/2}h) \Bigr)
  \convweak{} L_\tht,\,\,\text{(under $P_{\tht+n^{-1/2}h}$)},
\end{equation}
\ie\ with a limit law independent of $h$.
\end{definition} 
So regularity describes the property that convergence of the estimator
to a limit law is insensitive to \emph{perturbation} of the parameter 
of size $n^{-1/2}h$. The LAN and regularity properties come together
in the following theorem which forms the foundation for the convolution
theorem that follows (see theorems~7.10,~8.3,~8.4
in van~der~Vaart (1998) \cite{vdVaart98}).
\begin{theorem}\label{thm:limexprep}
{\rm (Gaussian limit experiment \cite{LeCam72})}\\
With $\Tht\subset\RR^k$ open, let $\scrP=\{ P_\tht:\tht\in\Tht \}$ be
LAN at $\tht_0$ with non-singular Fisher information $\FI_{\tht_0}$. Let
$(T_n)$ be regular estimators in the models
$\{P_{\tht_0+n^{-1/2}h}:h\in\RR^k\}$. Then there exists a (randomized)
statistic $T$ in the normal location model
$\{N_k(h,\FI_{\tht_0}^{-1}):h\in\RR^k\}$
such that $T-h\sim L_{\tht_0}$ for all $h\in\RR^k$. 
\end{theorem}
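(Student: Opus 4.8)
The plan is to recognize this as an instance of Le~Cam's theory of weak convergence of statistical experiments combined with the asymptotic representation theorem, so the argument splits into two stages: first show that the localized models converge weakly to the Gaussian shift experiment, and then transport the regular estimator sequence to that limit. Throughout I would localize: writing $\tht_n(h)=\tht_0+n^{-1/2}h$ and $S_n=n^{1/2}(T_n-\tht_0)$, the regularity assumption (\ref{eq:regularity}) reads $S_n-h\convweak{}L_{\tht_0}$ under $P_{\tht_n(h)}^n$ for every $h$; equivalently, under the local parameter $h$ the statistic $S_n$ converges in law to the shift $\delta_h\ast L_{\tht_0}$. The relevant local models are $\mathscr{E}_n=(P_{\tht_n(h)}^n:h\in\RR^k)$, to be compared with the Gaussian shift experiment $\mathscr{E}=(N_k(h,\FI_{\tht_0}^{-1}):h\in\RR^k)$.

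First I would establish the weak convergence $\mathscr{E}_n\rightarrow\mathscr{E}$. By Le~Cam's characterization this is equivalent to convergence, under $P_{\tht_0}^n$, of the finite-dimensional distributions of the likelihood-ratio process $h\mapsto\log(dP_{\tht_n(h)}^n/dP_{\tht_0}^n)$ to those of the limit experiment. The LAN expansion (\ref{eq:LAN}), in which a single $\Gamma_{n,\tht_0}\convweak{\tht_0}N_k(0,\FI_{\tht_0})$ appears for all $h$, gives for any finite set $h_1,\dots,h_m$ that these log-ratios are jointly asymptotically equal to $h_j^T\Gamma-\ft12 h_j^T\FI_{\tht_0}h_j$ with $\Gamma\sim N_k(0,\FI_{\tht_0})$ (joint convergence is immediate, since these are fixed linear functionals of the common $\Gamma_{n,\tht_0}$ up to $o_{P_{\tht_0}}(1)$). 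In $\mathscr{E}$ the log-likelihood ratio of $h$ against $0$ equals $h^T\FI_{\tht_0}X-\ft12 h^T\FI_{\tht_0}h$, where $\FI_{\tht_0}X\sim N_k(0,\FI_{\tht_0})$ under $h=0$; non-singularity of $\FI_{\tht_0}$ makes $\mathscr{E}$ non-degenerate and identifies $\Gamma$ with $\FI_{\tht_0}X$, so the two families of finite-dimensional laws coincide and $\mathscr{E}_n\rightarrow\mathscr{E}$ follows (contiguity, via Le~Cam's first lemma, guaranteeing the limiting likelihood ratios integrate to one).

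Second I would apply the asymptotic representation theorem. Since $\mathscr{E}_n\rightarrow\mathscr{E}$ and $(S_n)$ is a sequence of statistics whose laws converge under every local parameter $h$, the theorem furnishes a (possibly randomized) statistic $T$ in the limit experiment $\mathscr{E}$ whose law under $N_k(h,\FI_{\tht_0}^{-1})$ equals the limit law of $S_n$ under $h$, namely $\delta_h\ast L_{\tht_0}$. This is precisely the assertion $T-h\sim L_{\tht_0}$ for all $h\in\RR^k$.

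The main obstacle is the representation theorem itself rather than the LAN bookkeeping: converting the marginal-in-$h$ convergence supplied by regularity into a single randomized procedure living on the Gaussian experiment requires Le~Cam's machinery — representing (randomized) estimators as Markov kernels into the action space, proving relative compactness of these kernels in the appropriate weak topology (a Prohorov-type tightness argument, for which the uniformity built into regularity is essential), and passing to a subsequential limit whose matching distribution is verified through the already-established convergence of the likelihood process. I would invoke this final step from the cited sources (\cite{LeCam72}, and theorems~7.10,~8.3,~8.4 in \cite{vdVaart98}) rather than reprove it here, since it is standard and orthogonal to the semiparametric developments that follow.
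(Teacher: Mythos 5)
Your proposal is correct and follows exactly the route the paper points to: the paper gives no self-contained proof but defers to Le~Cam (1972) and theorems~7.10,~8.3,~8.4 of van~der~Vaart (1998), which is precisely your two-stage argument (LAN implies weak convergence of the local experiments to the Gaussian shift experiment, then the asymptotic representation theorem transports the regular sequence, with regularity giving the limit laws $\delta_h\ast L_{\tht_0}$). Your handling of the details — joint finite-dimensional convergence of the log-likelihood ratios through the common $\Gamma_{n,\tht_0}$, identification of the limit with the Gaussian shift likelihoods via non-singularity of $\FI_{\tht_0}$, and contiguity via Le~Cam's first lemma — is sound and consistent with the cited treatment.
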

Theorem~\ref{thm:limexprep} provides every regular estimator sequence
with a limit in the form of a statistic in a very simple model
in which the only parameter is the location of a normal distribution:
the (weak) limit distribution that describes the local asymptotics
of the sequence $(T_n)$ under $P_{\tht_0+n^{-1/2}h}$ {\it equals} the
distribution of $T$ under $h$, for all $h\in\RR^k$. Moreover regularity
of the sequence $(T_n)$ implies that under $N_k(h,\FI_{\tht_0}^{-1})$, the
distribution of $T$ relative to $h$ is independent of $h$, an property
known as \emph{equivariance-in-law}. The class of equivariant-in-law
estimators for location in the model $\{N_k(h,\FI_{\tht_0}^{-1}):h\in\RR^k\}$
is fully known: for any equivariant-in-law estimator $T$ for $h$, there
exists a distribution $M$ such that $T$ is distributed according to the
convolution $N_k(h,\FI_{\tht_0}^{-1})\ast M$. (The most straightforward
example is $T=X$, for which $M=\delta_0$.) This argument gives rise to
the following central result in the theory of efficiency.
\begin{theorem}\label{thm:conv}
{\rm (Convolution theorem (H\'ajek (1970) \cite{Hajek70}))}\\
Let $\Tht\subset\RR^k$ be open and let $\{P_\tht:\tht\in\Tht\}$ be
LAN at $\tht_0$ with non-singular Fisher information $\FI_{\tht_0}$.
Let $(T_n)$ be a regular estimator sequence with limit
distribution $L_{\tht_0}$. Then there exists a probability distribution
$M_{\tht_0}$ such that,
\[
  L_{\tht_0} = N_k(0,I_{\tht_0}^{-1}) \ast M_{\tht_0},
\]
in particular, if $L_{\tht_0}$ has a covariance matrix $\Sigma_{\tht_0}$,
then $\Sigma_{\tht_0}\geq I_{\tht_0}^{-1}$.
\end{theorem}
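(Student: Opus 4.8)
The plan is to use the Gaussian limit experiment of Theorem~\ref{thm:limexprep} to transport the entire problem into the normal location model, and then to establish the convolution structure there by a characteristic-function argument combined with an analytic continuation in the shift parameter. By Theorem~\ref{thm:limexprep}, regularity of $(T_n)$ together with the LAN property at $\tht_0$ yields a (possibly randomized) statistic $T$ in the limit experiment $\{N_k(h,\Sigma):h\in\RR^k\}$, where $\Sigma:=\FI_{\tht_0}^{-1}$, such that $T-h\sim L_{\tht_0}$ under $N_k(h,\Sigma)$ for every $h$. In particular $T$ is equivariant-in-law, and the theorem reduces to the purely probabilistic claim that any equivariant-in-law (randomized) estimator $T$ of the location $h$ in the Gaussian shift model has a distribution of the form $N_k(0,\Sigma)\ast M$. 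Writing $P_h$ (with expectation $E_h$) for the joint law of $(X,U)$ in which $X\sim N_k(h,\Sigma)$ and $U$ is an independent randomizer, and $\phi_L$ for the characteristic function of $L_{\tht_0}$, equivariance reads $E_h\,e^{it^TT}=e^{it^Th}\,\phi_L(t)$ for all $t,h\in\RR^k$.

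Next I would exploit the exponential-family structure of the shift model. Since $dP_h/dP_0(X)=\exp(h^T\Sigma^{-1}X-\tfrac12 h^T\Sigma^{-1}h)$, a change of measure gives
\[
  e^{it^Th}\,\phi_L(t)
  = e^{-\frac12 h^T\Sigma^{-1}h}\,
    E_0\!\left[e^{\,it^TT+h^T\Sigma^{-1}X}\right],
\]
valid for every real $h$. The decisive step is to continue this identity analytically in $h$ and to evaluate it at the imaginary shift $h=-i\Sigma t$. With this substitution the factors reorganize into $\phi_M(t):=e^{\frac12 t^T\Sigma t}\phi_L(t)=E_0[e^{\,it^T(T-X)}]$. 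Because $e^{-\frac12 t^T\Sigma t}$ is exactly the characteristic function of $N_k(0,\Sigma)$, the right-hand side exhibits $\phi_M$ as the characteristic function of the random vector $T-X$ under $P_0$; hence $\phi_M$ is a bona fide characteristic function and $\phi_L=\phi_{N_k(0,\Sigma)}\cdot\phi_M$, i.e.\ $L_{\tht_0}=N_k(0,\FI_{\tht_0}^{-1})\ast M_{\tht_0}$ with $M_{\tht_0}$ the law of $T-X$ under $P_0$.

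The main obstacle is justifying the analytic continuation. I would verify that both sides of the displayed identity extend to entire functions of $h\in\mathbb{C}^k$: the left-hand side is manifestly entire, while for the right-hand side the integrand obeys $|e^{\,it^TT+h^T\Sigma^{-1}X}|=e^{\mathrm{Re}(h^T\Sigma^{-1}X)}$ (using $|e^{it^TT}|=1$), which is $P_0$-integrable for every complex $h$ by the Gaussian tails of $X$; dominated convergence and Morera's theorem then deliver analyticity, and two entire functions agreeing on $\RR^k$ agree on $\mathbb{C}^k$. A secondary bookkeeping point is the randomizer $U$, but since $dP_h/dP_0$ depends on $X$ only, the change of measure leaves the $U$-integration untouched and the argument is unaffected. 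Finally, for the covariance assertion, if $L_{\tht_0}$ has finite second moment then, by independence in the convolution, $\Sigma_{\tht_0}=\FI_{\tht_0}^{-1}+\mathrm{Cov}(M_{\tht_0})\geq\FI_{\tht_0}^{-1}$, the covariance matrix of $M_{\tht_0}$ being positive semidefinite.
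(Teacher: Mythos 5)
Your proposal is correct and follows exactly the route the paper sketches in the paragraph preceding the theorem: reduction to the Gaussian limit experiment via theorem~\ref{thm:limexprep}, equivariance-in-law of the limiting statistic $T$, and the convolution characterization of equivariant-in-law estimators in the normal location model $\{N_k(h,\FI_{\tht_0}^{-1}):h\in\RR^k\}$. The only difference is that where the paper (being a review) quotes that characterization as ``fully known'', you actually prove it, and your characteristic-function argument with analytic continuation to the imaginary shift $h=-i\Sigma t$ --- identifying $M_{\tht_0}$ as the law of $T-X$ under $h=0$ --- is the standard, correct way to fill in that step.
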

The occurrence of the inverse Fisher information is no coincidence:
the estimator $T$ is unbiased and satisfies the Cram\'er-Rao bound
in the limiting model $\{N_k(h,\FI_{\tht_0}^{-1}):h\in\RR^k\}$. Hence,
the last assertion of the convolution theorem says that, within the 
class of regular estimates, asymptotic variance is lower-bounded by
the inverse Fisher information. A regular estimator that is optimal
in this sense, is called {\it best-regular}. Anderson's lemma 
broadens this notion of optimality, in the sense that best-regular
estimators outperform other regular estimators with respect to a
large family of loss functions. Conversely, the asymptotic minimax
theorem shows that best-regularity is \emph{necessary} for
optimality with respect to any such loss-function (H\'ajek (1972)
\cite{Hajek72}). Finally, we mention the following equivalence
which characterizes efficiency concisely in terms of a weakly
converging sequence.
\begin{lemma}
\label{lem:aslin}
In a LAN model, estimators 
$(T_n)$ for $\tht$ are best-regular iff the $(T_n)$ 
are asymptotically linear, \ie\ for all $\tht$ in the model,
\begin{equation}
  \label{eq:aslin}
  n^{1/2}(T_n-\tht) = \frac{1}{\sqrt{n}}\sum_{i=1}^n
    I_\tht^{-1}\score_\tht(X_i) + o_{P_\tht}(1).
\end{equation}
The random sequence of differences on the \rhs\ of (\ref{eq:aslin}) is
denoted by $\Delta_{n,\tht_0}$ below.
\end{lemma}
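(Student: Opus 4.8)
The plan is to prove the two implications separately. Throughout I fix a point $\tht$ of the model and write $S_n=n^{1/2}(T_n-\tht)$ and $\Gamma_{n,\tht}=n^{-1/2}\sum_{i=1}^n\score_\tht(X_i)$, so that $\Delta_{n,\tht}=\FI_\tht^{-1}\Gamma_{n,\tht}$; by the central limit theorem (the score $\score_\tht$ lies in $L_2(P_\tht)$ under local asymptotic normality) one has $\Gamma_{n,\tht}\convweak{\tht}N_k(0,\FI_\tht)$ and hence $\Delta_{n,\tht}\convweak{\tht}N_k(0,\FI_\tht^{-1})$.

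For the implication ``asymptotically linear $\Rightarrow$ best-regular'', I would start from $S_n=\Delta_{n,\tht}+o_{P_\tht}(1)$ and compute the joint limit under $P_\tht$ of $S_n$ together with the LAN log-likelihood ratio $\Lambda_n(h)=h^T\Gamma_{n,\tht}-\thalf h^T\FI_\tht h+o_{P_\tht}(1)$. Both are, up to negligible terms, linear functions of $\Gamma_{n,\tht}$, so they are jointly asymptotically normal, with $\operatorname{Cov}_\tht(S_n,h^T\Gamma_{n,\tht})\to\FI_\tht^{-1}\FI_\tht h=h$. Le~Cam's third lemma then yields that under $P_{\tht+n^{-1/2}h}$ the sequence $S_n$ converges weakly to $N_k(h,\FI_\tht^{-1})$, i.e. $n^{1/2}(T_n-(\tht+n^{-1/2}h))\convweak{}N_k(0,\FI_\tht^{-1})$ independently of $h$; this is regularity with limit law $L_\tht=N_k(0,\FI_\tht^{-1})$. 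The convolution theorem (theorem~\ref{thm:conv}) forces $M_\tht=\delta_0$, so the sequence is best-regular.

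For the converse I suppose $(T_n)$ is best-regular, so $L_\tht=N_k(0,\FI_\tht^{-1})$, and I study the joint behaviour of $(S_n,\Gamma_{n,\tht})$ under $P_\tht$. By the Gaussian limit experiment theorem (theorem~\ref{thm:limexprep}), regularity together with local asymptotic normality represents the local limit of $S_n$ by an equivariant-in-law statistic in the shift model $\{N_k(h,\FI_\tht^{-1}):h\in\RR^k\}$ and thereby guarantees that the pair $(S_n,\Gamma_{n,\tht})$ converges jointly to a (block) normal law, with $S_n\convweak{\tht}N_k(0,\FI_\tht^{-1})$ and $\Gamma_{n,\tht}\convweak{\tht}N_k(0,\FI_\tht)$. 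Writing $C=\lim\operatorname{Cov}_\tht(S_n,\Gamma_{n,\tht})$ and applying Le~Cam's third lemma exactly as above, regularity becomes the requirement $Ch=h$ for every $h$, whence $C=\mathrm{Id}_k$. The limiting covariance of the difference $S_n-\Delta_{n,\tht}=S_n-\FI_\tht^{-1}\Gamma_{n,\tht}$ is then $\FI_\tht^{-1}-C\FI_\tht^{-1}-\FI_\tht^{-1}C^T+\FI_\tht^{-1}\FI_\tht\FI_\tht^{-1}=0$ (using $C=\mathrm{Id}_k$), while its mean vanishes; a Gaussian limit with zero covariance is degenerate, so $S_n-\Delta_{n,\tht}\convweak{\tht}0$, equivalently $S_n-\Delta_{n,\tht}=o_{P_\tht}(1)$, which is the asymptotic linearity (\ref{eq:aslin}). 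As $\tht$ was arbitrary, the expansion holds throughout the model.

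The main obstacle is precisely the joint asymptotic normality of $(S_n,\Gamma_{n,\tht})$ in the converse direction: marginal normality of each component does not by itself produce a bivariate normal limit, and without it the covariance bookkeeping that collapses $S_n-\Delta_{n,\tht}$ is unavailable. This is where the limit-experiment machinery of theorem~\ref{thm:limexprep} is indispensable, since it couples the regular sequence $S_n$ to the central sequence $\Gamma_{n,\tht}$ in a jointly Gaussian fashion; best-regularity (asymptotic variance $\FI_\tht^{-1}$) together with the forced covariance $C=\mathrm{Id}_k$ then forces perfect asymptotic correlation and a degenerate difference. Everything else — the central limit theorem for $\Gamma_{n,\tht}$, the two applications of Le~Cam's third lemma, and the matrix computation of the limiting covariance — is routine.
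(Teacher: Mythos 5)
Your first direction (asymptotic linearity $\Rightarrow$ best-regularity) is correct and is the standard argument, which is also the one behind the references the paper points to (this lemma is van der Vaart's lemma~8.14, cited in the paper only via its semiparametric analogue, lemma~25.23 of \cite{vdVaart98}): $S_n$ and the LAN log-likelihood ratio are both, up to $o_{P_\tht}(1)$, linear in $\Gamma_{n,\tht}$, hence jointly asymptotically Gaussian, and the Gaussian form of Le~Cam's third lemma plus the convolution theorem give best-regularity. One small caution there: the relevant quantity is the covariance of the weak limit, not $\lim_n\operatorname{Cov}_\tht(S_n,h^T\Gamma_{n,\tht})$ --- an $o_{P_\tht}(1)$ remainder need not have vanishing variance, so the computation should be phrased in the limit, where it is exact.

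The converse, however, contains a genuine gap, located exactly where you place the ``main obstacle''. You claim that theorem~\ref{thm:limexprep} ``guarantees that the pair $(S_n,\Gamma_{n,\tht})$ converges jointly to a (block) normal law''. That theorem asserts only a \emph{marginal} representation: there exists a randomized statistic $T$ in the limit experiment with $T-h\sim L_{\tht}$ for all $h$; it says nothing about the joint law of $(S_n,\Gamma_{n,\tht})$. Worse, the claim is false for general regular sequences: by tightness, along subsequences the pair converges jointly to $(\Delta+W,\,\FI_\tht\Delta)$ with $\Delta\sim N_k(0,\FI_\tht^{-1})$ and $W\sim M_\tht$ \emph{independent} of $\Delta$ (this independence is the substance of the convolution theorem's proof, via completeness/ancillarity in the limit experiment or via a characteristic-function argument), and this joint limit is Gaussian only when $M_\tht$ happens to be Gaussian. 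Since your identity $Ch=h$ is derived by applying the Gaussian form of Le~Cam's third lemma, it rests on this unsupported premise; with a non-Gaussian joint limit the third lemma produces an exponential tilt, not a mean shift. The repair is to reverse the logical order: first establish the convolution structure $S=\Delta+W$, $W\perp\Delta$ (which is what the limit-experiment machinery actually delivers, but must be taken from the \emph{proof} of the convolution theorem, not from the statement of theorem~\ref{thm:limexprep}); then invoke best-regularity, which gives $N_k(0,\FI_\tht^{-1})\ast M_\tht=N_k(0,\FI_\tht^{-1})$, hence $\widehat{M}_\tht\equiv1$ because the Gaussian characteristic function never vanishes, hence $M_\tht=\delta_0$ and $W=0$. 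Then $S_n-\Delta_{n,\tht}\rightsquigarrow0$ along every subsequence, which is (\ref{eq:aslin}). Once the independence $W\perp\Delta$ is in hand, your covariance bookkeeping (with $C=\mathrm{Id}_k$ and degenerate difference) does go through; but joint Gaussianity is a \emph{consequence} of best-regularity ($W=0$), not a consequence of regularity available beforehand, so as written the argument is circular at its decisive step.
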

Coming back to theorem~\ref{thm:omle}, we see that under stated conditions,
a consistent sequence of MLE's $(\hat{\tht}_n)$ is best-regular, finally
giving substance to Fisher's 1920's claim. We now know that
in a LAN model, confidence sets of the form (\ref{eq:EffCI}) based on
best-regular estimators $(\hat\tht_n)$ share their optimality.

However, not all estimators are regular and not all model parameters are
smooth. In the literature, situations in which regularity does not apply
are collectively known as \emph{irregular}. A prototypical irregular
problem concerns the estimation a support boundary point for a density
supported on a half-line. As a frequentist problem, it is well-understood
(Ibragimov and Has'minskii (1981) \cite{Ibragimov81}): assuming that
the distribution $P_{\tht}$ of $X$ is supported on the half-line
$[\tht,\infty)$ and an \iid\ sample $X_1, X_2, \ldots, X_n$ is given,
we follow \cite{Ibragimov81} and estimate $\tht$ with the ML estimator,
the first order statistic $X_{(1)}=\min_i\{X_i\}$. If $P_\tht$ has a
continuous Lebesgue density of the form $p_{\tht}(x)=\eta(x-\tht)\,
1\{x\geq\tht\}$, its rate of convergence is determined by the behaviour
of the quantity $m(\ep)\mapsto\int_0^\ep \eta(x)\,dx$ for small values
of $\ep$. If $m(\ep)=\ep^{\alpha+1}(1+o(1))$ as $\ep\downarrow0$, for
some $\alpha\in(-1,1)$, then,
\begin{equation}
  \label{eq:rates}
  n^{1/(1+\alpha)}\bigl(\, X_{(1)}-\tht\, \bigr)=O_{P_\tht}(1).
\end{equation}
For densities of this form, for any sequence $\tht_n$ that converges
to $\tht$ at rate $n^{-1/(1+\alpha)}$, Hellinger distances obey (see
Theorem~VI.1.1 in \cite{Ibragimov81}):
\begin{equation}
  \label{eq:HellingerOpt}
  n^{1/2}\,H(P_{\tht_n},P_{\tht}) = O(1).
\end{equation}
If we substitute the estimators $\tht_n=\hat{\tht}_n(X_1,\ldots,X_n)=X_{(1)}$,
uniform tightness of the sequence in the above display signifies rate
optimality of the estimator (\cf\ Le~Cam (1973, 1986)
\cite{LeCam73,LeCam86}). Regarding asymptotic efficiency beyond
rate-optimality, \eg\ in the sense of minimal asymptotic variance
(or other measures of dispersion of the limit distribution), one
notices that the (one-sided) limit distributions one obtains for
the MLE $X_{(1)}$ can always be improved upon by de-biasing (see
Section~VI.6, examples~1--3 in \cite{Ibragimov81} and Le~Cam (1990)
\cite{LeCam90b}).

In much of what follows we concentrate on the support boundary
problem for a discontinuity ($\al=0$) because in those cases the
likelihood permits an expansion reminiscent of LAN: $\tht$ is
represented in localised form, by centering on $\theta_0$ and
rescaling: $h = n(\theta-\theta_0)\in\RR$. The following
(irregular) local expansion of the likelihood is due to Ibragimov and
Has'minskii (1981) \cite{Ibragimov81}.
\begin{definition}\label{df:LAE}
{\it (Local asymptotic exponentiality (LAE))}\\
Let $\Tht\subset\RR$ be open; a model $\tht\mapsto P_{\tht}$ is said to
be \emph{locally asymptotically exponential} (LAE) at $\tht_0\in\Tht$
if there exists a sequence of random variables $(\Delta_n)$ and
a positive constant $\gamma_{\tht_0}$ such that for all $(h_n)$,
$h_n\rightarrow h$,
\[
  \prod_{i=1}^n \frac{p_{\tht_0 + n^{-1}h_n}}{p_{\tht_0}}(X_i)
  = \exp(h\gamma_{\tht_0} + o_{P_{\tht_0}}(1))\,1_{\{h\leq \Delta_n\}},
\]
with $\Delta_n$ converging weakly to $\Exp_{0,\gamma_{\tht_0}}$.
\end{definition}


\subsection{Le~Cam's Bernstein-von~Mises theorem}
\label{sub:bvm}

To address the question of efficiency in smooth parametric models
from a Bayesian perspective, we turn to the Bern\-stein-\-Von~Mises
theorem (Le~Cam (1953) \cite{LeCam53}). In the literature many
different versions of the theorem
exist, varying both in (stringency of) conditions and (strength or)
form of the assertion. Following Le~Cam and Yang (1990) \cite{LeCam90}
we state the theorem as follows. (For later reference define a 
parametric prior to be {\em thick} at $\tht_0$, if it has a Lebesgue 
density that is continuous and strictly positive at $\tht_0$.)
\begin{theorem}\label{thm:truebvm}
{\rm (Bernstein-Von~Mises theorem, Le~Cam and Yang (1990) \cite{LeCam90})}\\
Assume that $\Tht\subset\RR^k$ is open and that the model
$\scrP=\{P_\tht:\tht\in\Tht\}$ is identifiable and dominated.
Suppose $X_1, X_2, \ldots$ forms an {\it i.i.d.} sample from $P_{\tht_0}$
for some $\tht_0\in\Tht$. Assume that the model is LAN at $\tht_0$
with non-singular Fisher information $I_{\tht_0}$.
Furthermore, suppose that, the prior $\Pi_\Tht$ is thick at $\tht_0$
and that for every $\ep>0$, there exists a test sequence $(\phi_n)$
such that,
  \[
    P_{\tht_0}^n\phi_n\rightarrow0,\quad \sup_{\|\tht-\tht_0\|>\ep}
    P_{\tht}^n(1-\phi_n)\rightarrow0.
  \]
Then the posterior distributions converge in total variation,
\[
  \sup_{B}\Bigl|\,\Pi\bigl(\,\tht\in B\bigm|X_1,\ldots,X_n\bigr)
    - N_{{\hat{\tht}_n},(nI_{\tht_0})^{-1}}(B)\,\Bigr| \rightarrow 0,
\]
in $P_{\tht_0}$-probability, where $(\hat{\tht}_n)$ denotes any best-regular
estimator sequence.
\end{theorem}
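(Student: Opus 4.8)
The plan is to pass to the local parameter $h=\sqrt{n}(\tht-\tht_0)$ and to prove total-variation convergence of the induced posterior for $h$ to the law $N_{\Delta_{n,\tht_0},\FI_{\tht_0}^{-1}}$, where $\Delta_{n,\tht_0}=\FI_{\tht_0}^{-1}\Gamma_{n,\tht_0}$. Total variation is invariant under the affine reparametrisation $\tht\mapsto h$, and the target Gaussian $N_{\htht_n,(n\FI_{\tht_0})^{-1}}$ pushes forward to $N_{\sqrt{n}(\htht_n-\tht_0),\FI_{\tht_0}^{-1}}$, so it suffices to control the local posterior together with the identity $\sqrt{n}(\htht_n-\tht_0)=\Delta_{n,\tht_0}+o_{P_{\tht_0}}(1)$. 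This last fact is exactly the asymptotic linearity of best-regular estimators from Lemma~\ref{lem:aslin}, and since the total-variation distance between two normals of equal covariance whose centres differ by $o_{P_{\tht_0}}(1)$ vanishes in probability, the problem reduces to showing that the local posterior density for $h$ converges in $L_1$ to the density of $N_{\Delta_{n,\tht_0},\FI_{\tht_0}^{-1}}$.

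First I would use the test sequence $(\phi_n)$ to establish posterior consistency: for every fixed $\ep>0$,
\[
  \Pi\bigl(\,\|\tht-\tht_0\|>\ep\bigm|X_1,\ldots,X_n\bigr)\convprob{P_{\tht_0}}0.
\]
The mechanism is Schwartz's: split the posterior numerator using $\phi_n$ and $1-\phi_n$. The first piece is dominated by $\phi_n$ and hence has $P_{\tht_0}^n$-expectation $P_{\tht_0}^n\phi_n\to0$; the second, integrated over $\{\|\tht-\tht_0\|>\ep\}$, is controlled by $\sup_{\|\tht-\tht_0\|>\ep}P_{\tht}^n(1-\phi_n)\to0$ after a Fubini step. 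Meanwhile the denominator (the evidence) is bounded below with $P_{\tht_0}^n$-probability tending to one by combining thickness of the prior at $\tht_0$ with the LAN expansion of Definition~\ref{def:LAN}, which shows that the prior mass of a shrinking ball around $\tht_0$ times the local likelihood ratio contributes at the correct $n^{-k/2}$ order.

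With the posterior concentrated on a fixed ball $\|\tht-\tht_0\|<\ep$, I would rescale to $h$ and show that the mass escaping to $\|h\|>M_n$, for any $M_n\to\infty$, is negligible. On the annulus $M_n n^{-1/2}\le\|\tht-\tht_0\|<\ep$ the LAN expansion produces the factor $\exp(h^T\Gamma_{n,\tht_0}-\thalf h^T\FI_{\tht_0}h)$, whose Gaussian decay in $\|h\|$ makes the annular contribution vanish relative to the bulk near $h=0$; this, with the consistency step, confines the posterior to compacta in $h$. On such compacta the integrand (prior density times likelihood ratio) equals $\pi(\tht_0)\exp(h^T\Gamma_{n,\tht_0}-\thalf h^T\FI_{\tht_0}h)(1+o_{P_{\tht_0}}(1))$, using continuity of the prior density from thickness and the LAN expansion for the likelihood. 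Normalising and applying Scheff\'e's lemma (pointwise convergence of densities plus convergence of their integrals yields $L_1$ convergence), together with the negligibility of the tails, delivers total-variation convergence to $N_{\Delta_{n,\tht_0},\FI_{\tht_0}^{-1}}$.

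The main obstacle is upgrading the LAN expansion of Definition~\ref{def:LAN}, stated only for deterministic convergent sequences $h_n\to h$, to the locally uniform control needed for an $L_1$ (rather than merely weak) conclusion, and in particular to the annular estimate that localises the posterior to scale $n^{-1/2}$. One cannot simply integrate a pointwise-in-$h$ expansion: the remainder must be shown uniformly small over growing balls and must not conspire with the Gaussian factor to inflate the intermediate region. The standard resolution is to cover $\|\tht-\tht_0\|\ge\ep$ by the hypothesised tests, to use the LAN-based lower bound on the evidence to render the intermediate annular mass asymptotically negligible, and only then to invoke Scheff\'e's lemma on compacta, where the pointwise expansion is enough.
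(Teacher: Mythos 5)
Your overall architecture (localise to $h=\sqrt{n}(\tht-\tht_0)$, Schwartz-type consistency from the tests, then an $L_1$/Scheff\'e argument on compacta) is the classical route of the proofs this paper points to --- the paper itself gives no proof of theorem~\ref{thm:truebvm}, deferring to Le~Cam and Yang \cite{LeCam90}, van~der~Vaart \cite{vdVaart98} and Kleijn--van~der~Vaart \cite{Kleijn12} --- and your opening reduction is sound: total variation is invariant under the affine map $\tht\mapsto h$, the target Gaussian pushes forward to $N_{\sqrt{n}(\hat\tht_n-\tht_0),\FI_{\tht_0}^{-1}}$, and lemma~\ref{lem:aslin} lets you replace the centre by $\Delta_{n,\tht_0}$ at the cost of an $o_{P_{\tht_0}}(1)$ shift, which is negligible in total variation for normals with common covariance.

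The genuine gap is the intermediate zone $M_n n^{-1/2}\le\|\tht-\tht_0\|\le\ep$, and the resolution you offer does not close it. The hypothesised tests have power only against $\{\|\tht-\tht_0\|>\ep\}$ for each \emph{fixed} $\ep$, so they say nothing about the annulus; and the LAN property of definition~\ref{def:LAN} is stated only for convergent (hence bounded) sequences $h_n\to h$, so it does not ``produce the factor $\exp(h^T\Gamma_{n,\tht_0}-\thalf h^T\FI_{\tht_0}h)$'' at points of the annulus, where $\|h\|$ may be as large as $\ep\sqrt{n}$; even the stronger stochastic LAN of definition~\ref{def:slan} covers only $h_n=O_{P_0}(1)$. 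You flag exactly this in your last paragraph, but the proposed fix is circular: a lower bound on the evidence controls the denominator, not the numerator mass over the annulus, and the given tests cannot control it either. What the cited proofs actually do here is supply one of two additional devices: (a) construct \emph{new} tests, out of the LAN/DQM structure combined with the assumed consistent tests, whose error probabilities are exponentially small \emph{uniformly} over annuli $\{M_n\le\|h\|\le\ep\sqrt{n}\}$ (this is van~der~Vaart's lemma~10.3 and is the real work of the theorem); or (b) Le~Cam's contiguity trick, comparing $P_{\tht_0}^n$ with the mixture $\int P^n_{\tht_0+h/\sqrt{n}}\,d\nu(h)$ over a fixed ball (mutually contiguous under LAN), under which posterior ratios are bounded by Fubini without any uniform likelihood expansion and without a separate evidence lower bound. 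The same pointwise-versus-uniform issue also infects your denominator step (integrating a pointwise-in-$h$ remainder $o_{P_{\tht_0}}(1)$ over a ball) and is resolved by the same trick. It is worth noting that the proof machinery this paper does develop (theorem~\ref{thm:pan}, following \cite{Kleijn12}) sidesteps precisely this step by \emph{assuming} rate-$\sqrt{n}$ marginal consistency as hypothesis (\ref{eq:sqrtn}); in the parametric theorem you were asked to prove, that statement is part of the conclusion and its derivation from the stated hypotheses is the essential content missing from your proposal.
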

For a proof, the reader is referred to \cite{LeCam90,vdVaart98} (or to
\cite{Kleijn12} for a proof under model 
misspecification that has a lot in common with the proof of
theorem~\ref{thm:pan} below); see also Bickel and Yahav (1969)
\cite{Bickel69}.
In figure~\ref{fig:postdens}, Bernstein-von~Mises-type convergence
of posterior densities is demonstrated through numerical simulation.
\begin{figure}[ht]
\label{fig:postdens}
\begin{center}
\includegraphics[width=\textwidth]{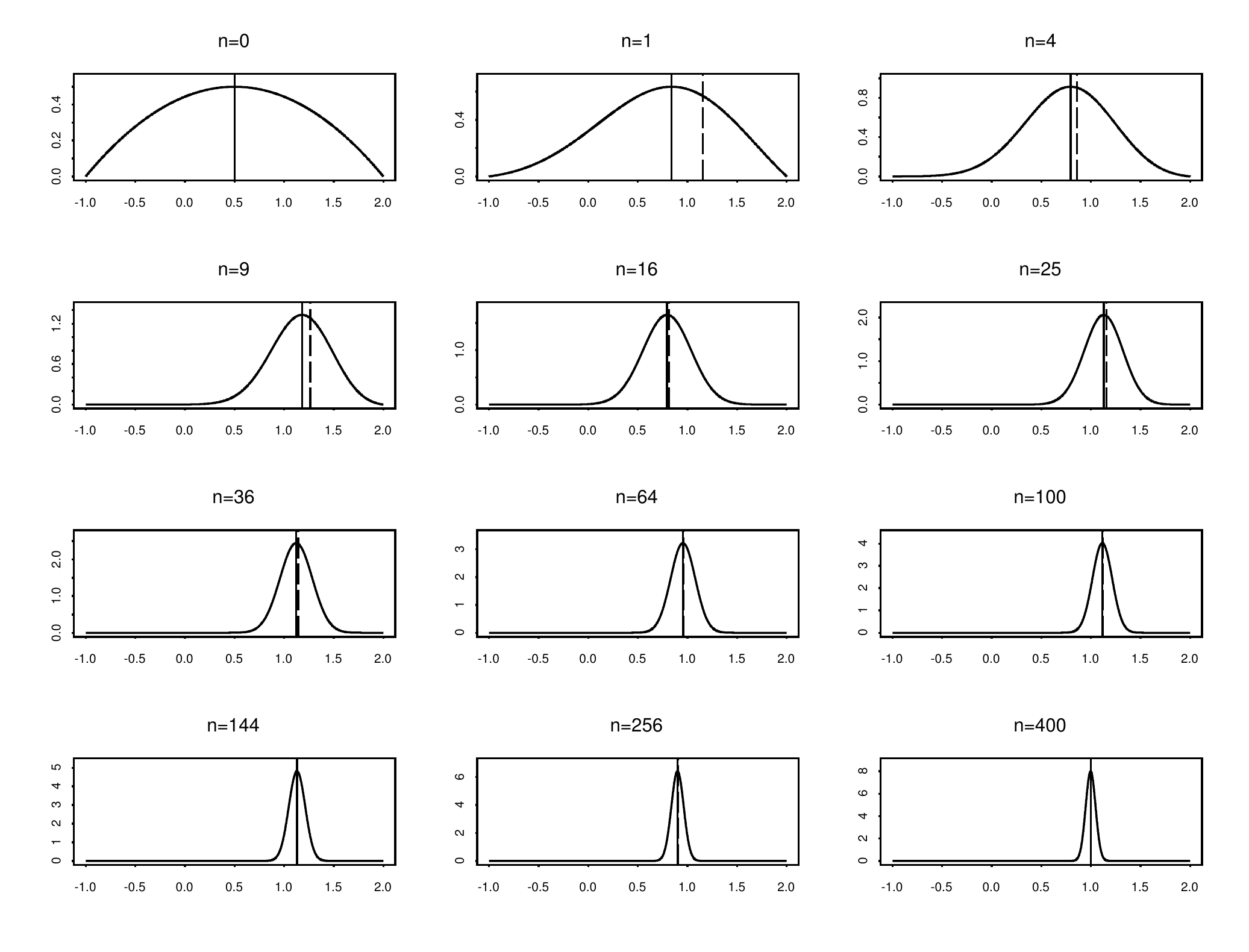}
\end{center}
\caption{Convergence of the posterior density. The samples used for
calculation of the posterior distributions consist of
$n$ observations; the model consists of all normal
distributions with mean between $-1$ and $2$ and variance $1$ and has
a polynomial prior, shown in the first ($n=0$) graph.
For all sample sizes, the {\it maximum a posteriori} and maximum 
likelihood estimators are indicated by a vertical line and a dashed
vertical line respectively. (From Kleijn (2003))}
\end{figure}
Also displayed in figure~\ref{fig:postdens} are the so-called
\emph{MAP estimator} and the ML estimator. It is noted that, here,
the MLE is efficient so it forms a possible centring sequence for
the limiting sequence of normal distributions in the assertion of
the Bernstein-Von~Mises theorem. Furthermore, it is noted that
the posterior concentrates more and more sharply, reflecting the
$n^{-1}$-proportionality of the variance of its limiting sequence
of normals. It is perhaps a bit surprising in figure~\ref{fig:postdens}
to see limiting normality obtain already at such relatively low
values of the sample size $n$. It cannot be excluded that, in
this case, that is a manifestation the normality of the
underlying model, but onset of normality of the posterior
appears to happen at low values of $n$ also in other smooth,
parametric models. It suggests that asymptotic conclusions based on
the Bernstein-Von~Mises limit accrue validity fairly rapidly, for $n$
in the order of several hundred to several thousand {\it i.i.d.}
replications of the observation.

The uniformity in the assertion of the Bernstein-Von~Mises theorem
over model subsets $B$ implies that it holds also for model subsets
that are random. (And because some authors content themselves with
weaker statements they call ``Bernstein-von~Mises'' assertions,
it is noted that, crucially, pointwise convergence of the posterior
distribution function does \emph{not} constitute a sufficient
condition.) In particular, given some $0<\al<1$, it is noted
that the smallest sets $C_\al(X_1,\ldots,X_n)$ such that,
\[
  N_{{\hat{\tht}_n},(nI_{\tht_0})^{-1}}\bigl(C_\al(X_1,\ldots,X_n)\bigr)
    \geq 1-\al,
\]
are ellipsoids of the form (\ref{eq:EffCI}). According to the
Bernstein-Von~Mises limit, posterior coverage of $C_\al$ converges
to the \lhs\ in the above display, so the $C_\al$ are asymptotic
credible sets of posterior coverage $1-\al$. Conversely, any sequence
$(C_n)$ of (data-dependent) credible sets of
coverage $1-\al$, is also a sequence of sets that have asymptotic
confidence level $1-\al$ (using the best-regularity of
$\hat{\tht}_n$). So the Bernstein-von~Mises theorem identifies
inference based on frequentist best-regular point-estimators with
inference based on Bayesian posteriors (in smooth, parametric models).
From a practical perspective the Bernstein-Von~Mises theorem
offers an alternative way to arrive at asymptotic confidence sets,
if we have an approximation of the posterior distribution of high
enough quality (\eg\ from MCMC simulation). In high
dimensional parametric models, maximization of the likelihood
may be much more costly
computationally than generation of a sample from the posterior.
As a consequence, the Bernstein-Von~Mises theorem has an immediate
practical implication of some significance. This practical point will
continue to hold in semiparametric context where the comparative advantage
is much greater.

The irregular example calls for estimation of a support boundary
point of a density: consider an almost-everywhere differentiable
Lebesgue density on $\RR$ that displays a jump at some point $\tht\in\RR$;
estimators for $\tht$ exist that converge at rate $n^{-1}$ with
exponential limit distributions \cite{Ibragimov81}. To illustrate the
form that this conclusion takes in Bayesian context, consider the
following straightforward theorem with exponential densities. 
\begin{theorem}\label{thm:ExpBvM}
{\rm (Irregular posterior convergence)}\\
For $\tht\in\RR$, let $F_\tht(x)=(1-e^{-(x-\tht)})\vee0$. Assume
that $X_1,X_2,\ldots$ form an \iid\ sample from $F_{\tht_0}$, for some
$\tht_0$. Let $\pi:\RR\rightarrow(0,\infty)$ be a continuous Lebesgue
probability density. Then the associated posterior distribution
satisfies,
\[
  \sup_A \Bigl|\,\Pi_n\bigl(\,\tht \in A \bigm| X_1, \ldots, X_n \,\bigr)
    - \NExp_{\hat\tht_n,n}(A)\,\Bigr| \convprob{\tht_0}  0,
\]
where $\hat\tht_n=X_{(1)}$ is the maximum likelihood estimate for $\tht_0$.
\end{theorem}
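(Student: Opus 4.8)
The plan is to reduce the assertion to a deterministic calculation in a localized parameter and then invoke consistency of $X_{(1)}$. Since $F_\tht$ has Lebesgue density $p_\tht(x)=e^{-(x-\tht)}1_{\{x\ge\tht\}}$, the likelihood is $\prod_{i=1}^n p_\tht(X_i)=e^{n\tht-\sum_i X_i}\,1_{\{\tht\le X_{(1)}\}}$, which is increasing in $\tht$ on $(-\infty,X_{(1)}]$; hence $\hat\tht_n=X_{(1)}$ is the MLE. The factor $e^{-\sum_i X_i}$ cancels between numerator and denominator of (\ref{eq:posterior}), so the posterior has Lebesgue density
\[
  f_n(\tht)=\frac{e^{n\tht}\pi(\tht)\,1_{\{\tht\le X_{(1)}\}}}
    {\int_{-\infty}^{X_{(1)}}e^{ns}\pi(s)\,ds},
\]
which depends on the sample only through $X_{(1)}$. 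As $\NExp_{\hat\tht_n,n}$ also depends only on $X_{(1)}$, the total-variation distance in the assertion is a deterministic function $D_n(X_{(1)})$ of $X_{(1)}$ alone. It therefore suffices to prove $D_n(t_n)\to0$ for every deterministic sequence $t_n\to\tht_0$ and then combine this with consistency of the first order statistic: since $X_{(1)}\ge\tht_0$ and $P_{\tht_0}(X_{(1)}>\tht_0+\ep)=e^{-n\ep}$, one has $X_{(1)}\to\tht_0$ \Pas, whence $D_n(X_{(1)})\to0$ along almost every sample path.

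For the deterministic computation, both $f_n$ and the $\NExp_{t_n,n}$-density are supported on $(-\infty,t_n]$, so the total-variation distance equals one half of the $L_1$-distance of the densities over that half-line. Introducing the local coordinate $u=n(\tht-t_n)\le0$ turns the $\NExp_{t_n,n}$-density into $e^u$ and yields, after cancellation,
\[
  2\,D_n(t_n)=\int_{-\infty}^0 e^u\,
    \Bigl|\,\frac{\pi(t_n+u/n)}{W_n}-1\,\Bigr|\,du,
  \qquad
  W_n=\int_{-\infty}^0 e^v\,\pi(t_n+v/n)\,dv.
\]
Here $W_n=\int\pi\,d\NExp_{t_n,n}$ is the prior density averaged against a law concentrating at $t_n$, so the heuristic is clear: continuity of $\pi$ at $\tht_0$ should give $W_n\to\pi(\tht_0)>0$ and force the integrand to vanish pointwise, leaving $D_n(t_n)\to0$ by dominated convergence.

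Making this rigorous requires control of the tail $u\to-\infty$, where $\pi$ is sampled arbitrarily far to the left of $\tht_0$. I would split the range at a fixed cut-off $-K$. On $[-K,0]$ the argument $t_n+u/n\to\tht_0$ uniformly, so uniform continuity of $\pi$ on a compact neighbourhood of $\tht_0$ shows $\int_{-K}^0 e^v\pi(t_n+v/n)\,dv\to\pi(\tht_0)(1-e^{-K})$. For the tail $\int_{-\infty}^{-K}e^v\pi(t_n+v/n)\,dv$, returning to the $\tht$-variable produces a factor $n$, giving $n\int_{-\infty}^{t_n-K/n}\pi(\tht)e^{n(\tht-t_n)}\,d\tht$; splitting the pre-image at $\tht_0-\delta$, on the near part $\pi$ is bounded and integrating the weight $n\,e^{n(\tht-t_n)}$ absorbs the factor $n$ to leave at most $M_\delta e^{-K}$, while on $(-\infty,\tht_0-\delta]$ the weight is at most $e^{-n\delta/2}$ for large $n$, which beats the factor $n$ and leaves $n\,e^{-n\delta/2}\int\pi\to0$. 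Taking $K$ large then $n$ large gives $W_n\to\pi(\tht_0)$; the same splitting applied to $\int e^u|\pi(t_n+u/n)-\pi(\tht_0)|\,du$, after bounding $|\pi(t_n+u/n)-W_n|\le|\pi(t_n+u/n)-\pi(\tht_0)|+|\pi(\tht_0)-W_n|$, yields $D_n(t_n)\to0$.

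The main obstacle is precisely this tail estimate: after the rescaling a factor $n$ appears, so the crude bound $n\,e^{-K}$ on the tail diverges, and one must genuinely use that $\pi$ is an integrable probability density together with the fact that the exponential likelihood weight confines the relevant mass to a shrinking neighbourhood of $\tht_0$. I would also remark in passing that the model is locally asymptotically exponential at $\tht_0$ in the sense of Definition~\ref{df:LAE}, with $\gamma_{\tht_0}=1$ and $\Delta_n=n(X_{(1)}-\tht_0)$ exactly $\Exp_{0,1}$-distributed; the present direct computation is, however, cleaner than an appeal to the general LAE machinery.
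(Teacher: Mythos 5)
Your proposal is correct and complete. Note, however, that the paper itself offers no proof of theorem~\ref{thm:ExpBvM}: it is introduced as a ``straightforward theorem'' and left unproven, with the general LAE machinery (definition~\ref{df:LAE} and the irregular analogues in \cite{Kleijn13}) as the implicit route one could follow. Your argument is the elementary, self-contained alternative, and it hinges on exactly the two points that make the direct computation legitimate: first, the observation that both the posterior and $\NExp_{\hat\tht_n,n}$ are functions of the sample only through $X_{(1)}$, so that almost-sure consistency of $X_{(1)}$ (via $P_{\tht_0}(X_{(1)}>\tht_0+\ep)=e^{-n\ep}$ and Borel--Cantelli) reduces the stochastic assertion to a statement about deterministic sequences $t_n\rightarrow\tht_0$ -- this pointwise-in-$\omega$ reduction is valid and is what lets you bypass any contiguity or uniform-integrability considerations; second, the tail estimate, where you correctly identify that the factor $n$ produced by the rescaling $u=n(\tht-t_n)$ cannot be beaten by the crude bound $e^{-K}$ alone, and you resolve it by splitting at a fixed $\tht_0-\delta$, using local boundedness of $\pi$ on the near piece (where the integrated exponential weight absorbs the factor $n$) and the bound $n\,e^{-n\delta/2}\int\pi\rightarrow0$ on the far piece, which is where integrability of the prior density genuinely enters. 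The limit $W_n\rightarrow\pi(\tht_0)>0$ then makes the dominated-convergence conclusion rigorous, and almost-sure convergence of the total-variation distance implies the convergence in probability asserted in the theorem. Compared with invoking the general LAE theorem, your computation buys transparency (the roles of continuity, positivity and integrability of $\pi$ are each visible) at no cost in rigour; the general route would buy reusability in models where the posterior is not an explicit function of a one-dimensional sufficient statistic.
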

Note that in this case the limiting posterior is a (negative) exponential
distribution that can be identified as the distribution for which level
sets define ML-based confidence sets. So here the asymptotic
identification of credible sets and confidence intervals holds as well.
What is missing in this case is the guarantee of optimality (for lack
of an irregular analog of the convolution theorem). Indeed, the
posterior follows the ML estimate and mean-square errors can be
improved upon by simple de-biasing \cite{Ibragimov81, LeCam90} as
a consequence.



\section{Semiparametric efficiency}
\label{sec:effsemi} 

Semiparametric statistics asks parametric questions in nonparametric
models. As such it combines the best of two worlds, diminishing the risk of
misspecification by use of nonparametric models while maintaining much
of the benefits of parametric inference, including the optimality
theory for regular estimators in smooth models. Although the more general
formulation calls for a nonparametric model $\scrP$ with a
finite-dimensional functional $\tht:\scrP\rightarrow\RR^k$ of interest,
we choose to parametrize model distributions in terms of a
finite-dimensional \emph{parameter of interest} $\tht\in\Tht$, for
an open $\Tht\subset\RR^k$, and an infinite-dimensional
\emph{nuisance parameter} $\eta\in H$; the nonparametric model
is then represented as $\scrP=\{P_{\tht,\eta}:\tht\in\Tht,\eta\in H \}$.
It is assumed that the model $\scrP$ is identifiable and that the true
distribution of the data $P_0$ is contained in the model, implying that
there exist unique $\tht_0\in\Tht$, $\eta_0\in H$ such that
$P_0=P_{\tht_0,\eta_0}$. Furthermore it is assumed that the model is
dominated by a $\sigma$-finite measure with densities $p_{\tht,\eta}$. 
Of course, we impose smoothness on the model in a suitable way and
we intend to estimate $\tht$ with semiparametric efficiency.

\subsection{Several semiparametric estimation problems}
\label{sub:three}

Before we discuss these matters in more detail, we mention several
well-known semiparametric estimation problems (with references to
Bayesian analyses in the literature).
\begin{example}\label{ex:slp}
{\it (Symmetric location problem, Stein (1956) \cite{Stein56})}\\
Consider a distribution with Lebesgue density
$\eta:\RR\rightarrow[0,\infty)$ that is symmetric around $0$,
\ie\ $\eta(x)=\eta(-x)$ for all $x\in\RR$ with finite
Fisher information for location $\int (\eta'/\eta)^2(x)\,\eta(x)\,dx<\infty$.
Assume that $X_1,X_2,\ldots$ is an \iid\ sample from a distribution
$P_0$ with density $p_{\tht_0}(x)=\eta(x-\tht_0)$. We are interested in
estimation of $\tht_0$ without knowledge of the nuisance $\eta$.
See Bickel (1982) \cite{Bickel82}; for a Bayesian
analysis with Bernstein-von~Mises limits, see Shen (2002) \cite{Shen02}
and Castillo (2012) \cite{Castillo12}.
\end{example}
\begin{example}\label{ex:nlm}
{\it (Semiparametric mixture models \cite{Pfanzagl88,vdVaart96a,
Bickel98,vdVaart98})}\\
Mixtures arise whenever a modelled random variable remains unobserved.
In semiparametric mixture models we have a pair $(Y,Z)$ of which
only $Y$ is observed and we consider the conditional distribution
of $Y$ given $Z=z$, assumed to be from a parametric family 
$\{\Psi_\tht(\cdot|z):\tht\in\Tht\}$. We aim to estimate $\tht$ in
the presence of the nuisance $F$, the unknown distribution of $Z$.
As a simple example, consider the
\emph{normal location model}: a random variable $X$ arises as
$X=Z+e$, where the unobserved $Z\in[0,1]$ has distribution 
$F\in\scrD[0,1]$ and is independent of a normally distributed error
$e\sim N(0,\sigma^2)$ with $\sigma\in\Sigma=[\sigma_{-},\sigma_{+}]
\subset(0,\infty)$. The model distributions $\{P_{\sigma,F}:\sigma\in\Sigma,
F\in\scrD[0,1]\}$ for $X$ have densities of the form:
\begin{equation}
  \label{eq:nlm}
  p_{\sigma,F}(x) = \int_0^1 \phi_{\sigma}(x-z)\,dF(z).
\end{equation}
The semiparametric problem then consists of estimation of $\sigma$,
in the presence of the nuisance parameter $F$. For later reference,
we note that the model has an envelope $[U,L]$ that can be
described by:
\begin{eqnarray}
  U(x)&=&\frac{\sigma_{+}}{\sigma_{-}}\Bigl(\phi_{\sigma_{+}}(x)1_{\{x<0\}}
    +\phi_{\sigma_{+}}(x+1)1_{\{x>1\}}
    +\phi_{\sigma_{+}}(0)1_{\{-M\leq x\leq M\}}\Bigr),\nonumber\\
  L(x)&=&\frac{\sigma_{-}}{\sigma_{+}}\Bigl(\phi_{\sigma_{-}}(x+1)1_{\{x<1/2\}}
    +\phi_{\sigma_{-}}(x)1_{\{x\geq1/2\}}\Bigr).\label{eq:envelope}
\end{eqnarray}
The earliest Bayesian analyses of the normal location model based on
the Dirichlet process prior (see, \eg\ Ferguson (1973) \cite{Ferguson73})
can be found in Ferguson (1983) \cite{Ferguson83} and Lo (1984)
\cite{Lo84}; a more modern perspective with Dirichlet priors is found in
Ghosal and van~der~Vaart (2001,2007) \cite{Ghosal01,Ghosal07} and Kleijn and
van~der~Vaart (2006) \cite{Kleijn06}. Numerical studies have been
carried out in, for example, in Escobar and West (1995) \cite{Escobar95}.
Bernstein-von~Mises-type Bayesian efficiency in semiparametric mixture
models has not been considered in the literature yet \cite{Chae14}.
Throughout this paper, the normal location model serves as an example
and in subsection~\ref{sub:nlm} a Bernstein-von~Mises conjecture for
this model is formulated.

To give a more practically useful example of a semiparametric mixture
model, consider the slightly more complicated but very similar
\emph{errors-in-variables regression model} (for an overview, see
Anderson (1984) \cite{Anderson84}), in which we observe an \iid\
sample from $(X,Y)$ related to an unobserved random variable $Z$
through the regression equations,
\[
  X = Z + e,\quad\text{and}\quad Y = g_\tht(Z) + f,
\]
where, usually, the errors $(e,f)$ are assumed standard-normal and
independent. (For a Bayesian analysis involving rates of convergence
with non-parametric regression families, see chapter~4 in Kleijn (2003)
\cite{Kleijn03}.) The most popular formulation of the model involves
a family of regression functions that is linear: $g_{\al,\beta}(z) =
\al + \beta\,z$ and a completely unknown distribution $F$ for the
unobserved $Z\sim F$. Interest then goes to estimation of the
parameter $\tht=(\al,\beta)$, while treating $F$ as the nuisance
parameter (see van~der~Vaart (1996) \cite{vdVaart96a}
and Taupin (2001) \cite{Taupin01}).
\end{example}
\begin{example}\label{ex:cox}
{\it (Cox' proportional hazards model \cite{Cox72})}\\
In medical studies (and many in other disciplines) one is
interested in the relationship between the time of ``survival''
(which can mean anything from time until actual death, to
onset of a symptom, or detection of a certain protein in a patients
blood, etc.) and covariates believed to be of influence (like
a regime of medication or specific patient habits). Observations
consist of pairs $(T,Z)$ associated with individual patients, where
$T$ is the survival time and $Z$ is a vector of covariates. The
probability of non-survival between $t$ and $t+dt$, given survival
up to time $t$ is called the \emph{hazard function} $\lambda(t)$,
\[
  \lambda(t)\,dt = P\bigl(\, t\leq T\leq t+dt \bigm| T\geq t\,\bigr).
\]
The \emph{Cox proportional hazards model} prescribes that the
\emph{conditional hazard function} given $Z$ is of the form,
\[
  \lambda(t|Z)\,dt = e^{\tht^T\,Z}\,\lambda_0(t),
\]
where $\lambda_0$ is the so-called \emph{baseline hazard function}.
The interpretation of the parameter of interest $\tht$ is easily
established: if, for example, the component $Z_i\in\{0,1\}$ describes
presence (or not) of certain characteristics in the patient (\eg\
$Z=0$ for a non-smoker and $Z=1$ for a smoker), then $e^{\tht_i}$ is
the ratio of hazard rates between two patients that differ only
in that one characteristic $Z_i$. The parameter of interest is the
vector of $\tht$, while the baseline hazard rate is treated as an
unknown nuisance parameter. Early Bayesian references in this
model include Ferguson (1979) \cite{Ferguson79},
Kalbfleisch (1978) \cite{Kalbfleisch78} and
Hjort (1990) \cite{Hjort90}; see also De~Blasi and
Hjort (2007, 2009) \cite{Deblasi07,Deblasi09}. Kim and Lee (2004)
\cite{Kim04} show that the posterior for the cumulative hazard
function under right-censoring converges at rate $n^{-1/2}$ to 
a Gaussian centred at the Aalen-Nelson estimator for a class of
neutral-to-the-right process priors. In Kim (2006) \cite{Kim06}
the posterior for the baseline cumulative hazard function and
regression coefficients in Cox' proportional hazard model are
considered with similar priors. See Castillo (2012)
\cite{Castillo12} for a Bernstein-von~Mises theorem for the
proportional hazard rate.
\end{example}
\begin{example}\label{explr}
{\it (Partial linear regression \cite{Chen91,Bickel98,
Mammen97,vdVaart98})}\\
Consider a situation in which one observes a vector $(Y;U,V)$ of
random variables, assumed related through the regression equation,
\begin{equation}
  \label{eq:plrmodel}
  Y = \tht\,U+\eta(V) + e,
\end{equation}
with $e$ independent of the pair $(U,V)$ and such that $Ee=0$,
usually assumed normally distributed. The rationale behind this 
model would arise from situations where one is observing a linear
relationship between two random variables $Y$ and $U$,
contaminated by an additive influence from $V$ of largely unknown
form. The parameter $\tht\in\RR$ is of interest while the nuisance
$\eta$ is from some infinite-dimensional function space $H$.
It is assumed that $(U,V)$ has an unknown distribution $P$, Lebesgue
absolutely continuous with density $p:\RR^2\rightarrow\RR$.
The distribution $P$ is assumed to be such that $PU=0$, $PU^2=1$
and $PU^4<\infty$. At a later stage, we also impose
$P(U-{\rm E}[U|V])^2>0$ and a smoothness condition on the
conditional expectation $v\mapsto {\rm E}[U|V=v]$. 
Bayesian efficient estimation of the linear coefficient in the
partial linear regression model was first discussed in Kimeldorf
and Wahba (1970) \cite{Kimeldorf70}: the nuisance lies in the
Sobolev space $H^k[0,1]$ with prior defined through the
zero-mean Gaussian process \cite{vdVaart07},
\begin{equation}
  \label{eq:kIBM}
  \eta(t) = \sum_{i=0}^k Z_i\,\frac{t^i}{i!} + (I_{0+}^k W)(t),
\end{equation}
where $W=\{W_t:t\in[0,1]\}$ is Brownian motion on $[0,1]$,
$(Z_0,\ldots,Z_k)$ form a $W$-independent, $N(0,1)$-\iid\ sample
and $I_{0+}^k$ denotes 
$(I_{0+}^1f)(t) = \int_0^tf(s)\,ds$, or $I_{0+}^{i+1}f = I_{0+}^1\,I_{0+}^{i}f$
for all $i\geq1$. Below, we summarize the analysis given in Bickel
and Kleijn (2012) \cite{Bickel12} with bounded Sobolev spaces and
conditioned versions of the prior process (\ref{eq:kIBM}) for
the nuisance $\eta$. For another Bayesian analyses of the partial
linear problem, see Shen (2002) \cite{Shen02}. MCMC simulations
based on Gaussian priors have been carried out by Shively,
Kohn and Wood (1999) \cite{Shively99}.
\end{example}
\begin{example}\label{ex:sbe}
{\it (Semiparametric support boundary estimation
\cite{Ibragimov81,LeCam72,LeCam90b})}\\
Consider a model of densities with a discontinuity at $\tht$.
Observed is an \iid\ sample $X_1, X_2, \ldots$ with marginal
$P_0$. The distribution $P_0$ is assumed to have a density
with unknown location $\tht$ for a nuisance density $\eta$ in
some space $H$. Model distributions $P_{\tht,\eta}$ are then
described by densities,
\[
  p_{\tht,\eta}:[\tht,\infty)\rightarrow[0,\infty): x\mapsto\eta(x-\tht),
\]
for $\eta\in H$ and $\tht\in\Tht\subset\RR$.

A Bayesian analysis of this irregular semiparametric estimation problem
can be found in Kleijn and Knapik (2013) \cite{Kleijn13}. Our
interest does not lie in modelling of the tail and we concentrate on
specifying the behaviour at the discontinuity. For given
$S>0$, let $\scrL$ denote the ball of radius $S$ in the space
$(C[0,\infty],\|\cdot\|_{\infty})$ of continuous functions
from the extended half-line to $\RR$ with uniform norm. Let $\alpha>S$
be fixed. We assume that $\eta:[0,\infty)\rightarrow[0,\infty)$
is differentiable and that $\score(t) = \eta'(t)/\eta(t)$ is
a bounded continuous function with a limit at infinity. Define $H$
as the image of $\scrL$ under the map that takes $\score\in\scrL$
into densities $\eta_\score$ by an Esscher transform of the form,
\begin{equation}
  \label{eq:Esscher}
  \eta_\score(x)
    =\frac{e^{-\alpha\,x+\int_0^x\score(t)\,dt}}
   {\int_0^\infty e^{-\alpha\,y+\int_0^y\score(t)\,dt}\,dy},
\end{equation}
for $x\geq0$. This map is uniform-to-Hellinger continuous
(see Lemma~4.1 in \cite{Kleijn13}). To define a prior on this
model, let $\{W_t:t\in[0,1]\}$ be Brownian motion on
$[0,1]$ and let $Z$ be independent and distributed $N(0,1)$. We
define the prior $\Pi_\scrL$ on $\scrL$ as the distribution of the
process,
\begin{equation}
  \label{eq:sb-prior}
  \score(t) = S\,\Psi(Z+W_t),
\end{equation}
where $\Psi:[-\infty,\infty]\rightarrow[-1,1]: x\mapsto 2\arctan (x)/\pi$.
Then $\scrL\subset{\rm supp}\bigl(\Pi_\scrL\bigr)$. Then
$\scrL\subset{\rm supp}(\Pi_{\scrL})$.
\end{example}


\subsection{Efficiency in semiparametric models}
\label{sub:effsemi}

The strategy for finding efficient semiparametric estimators for
$\tht_0$ is based on the following argument: suppose that
$\Tht\subset\RR$ and that $\scrP_0$ is submodel of $\scrP$
containing $P_0=P_{\tht_0,\eta_0}$. Then estimation of $\tht_0$ in
the model $\scrP_0$ is no harder than it is in $\scrP$. When
applied to smooth parametric models, this self-evident truth
implies that estimation of the parameter $\tht$ is more
accurate in $\scrP_0$ than in $\scrP$ in the large sample
limit. According to theorem~\ref{thm:conv}, the Fisher information
associated with $\tht$ in the larger model is smaller than or
equal to that in the smaller model. Semiparametric information
bounds are obtained as infima over the information bounds one
obtains from collections of smooth, finite-dimensional submodels.
That collection has to be somehow ``rich enough'' to capture the
sharp information bound for (regular) semiparametric
estimators (which represents the price one pays for a more
general model).

Like in section~\ref{sec:eff}, we introduce smoothness and
regularity, here with respect to a collection of submodels. For
simplicity assume that $\Tht$ is open in $\RR$. Let $U$ be an open
neighbourhood of $\tht_0$ and consider a map $\gamma:U\rightarrow\scrP
:\tht\mapsto P_{\tht}$ such that $P_{\tht=\tht_0}=P_0$. To impose
smoothness of $\gamma$, assume that there exists a $P_0$-square-integrable
score function $\score$ such that $P_0\score=0$ and the LAN property
is satisfied:
\[
  \log\prod_{i=1}^n \frac{p_{\tht_0+n^{-1/2}h_n}}{p_0}(X_i)
    =\frac{1}{\sqrt{n}}\sum_{i=1}^n h\,\score(X_i) - \ft12 h^2P_0\score^2
    +o_{P_0}(1),
\]
for $h_n\rightarrow h$. Let $\scrS$ denote a collection of such smooth
submodels. The corresponding collection of score functions
$\{\score\in L_2(P_0):\gamma\in\scrS\}$ may not be closed, but there
exists an $\effscore_{\scrS}$ in its $L_2(P_0)$-closure such
that:
\begin{equation}
  \label{eq:effinf}
  \effFI_{\scrS} := P_0\effscore_{\scrS}^2 =
    \inf_{\{\score:\gamma\in\scrS\}} P_0\score^2.
\end{equation}
In this text we refer to $\effscore_{\scrS}$ and $\effFI_{\scrS}$ as
the \emph{efficient score function} and \emph{efficient Fisher
information} for $\tht_0$ at $P_{\tht_0,\eta_0}$ relative to $\scrS$.
The efficient Fisher information $\effFI_{\scrS}$ captures the notion
of an ``infimal Fisher information'' (over $\scrS$) alluded to above.
Clearly, $\effFI_{\scrS}$ decreases if we enrich $\scrS$.

Call any estimator sequence $(T_n)$ for $\tht_0$ \emph{regular} with
respect to $\scrS$, if $(T_n)$ is regular as an estimator for $\tht_0$
in all $\gamma\in\scrS$ (\cf\ definition~\ref{def:regular}).
Theorem~\ref{thm:conv} applies in any $\gamma\in\scrS$ so we
obtain a collection of Fisher information bounds, one for each
$\gamma\in\scrS$. This implies that for any $(T_n)$ regular
with respect to $\scrS$, a convolution theorem can be formulated
in which the inverse \emph{efficient} Fisher information
$\effFI_{\scrS}$ represents a lower bound
to estimation accuracy. For the following theorem,
which can be found as theorem~25.20 in \cite{vdVaart98},
define the \emph{tangent set} to be
$\{a\score:a\in[0,\infty),\gamma\in\scrS\}\subset L_2(P_0)$.
\begin{theorem}\label{thm:semiconvolution}
{\rm (Semiparametric convolution, van~der~Vaart (1988)
\cite{vdVaart88})}\\
Let $\Tht$ be an open subset of $\RR^k$ and let $H$ be an infinite-dimensional
nuisance space and let $\scrP$ be the corresponding semiparametric
model. Let a collection of smooth submodels $\scrS$ be given. Assume
that the true distribution of the {\it i.i.d.} data is 
$P_{\tht_0,\eta_0}$. For any estimator sequence $(T_n)$ that is regular
with respect to $\scrS$, the asymptotic covariance matrix is lower
bounded by $\effFI_{\scrS}$. Furthermore, if the tangent set is a
convex cone, the limit distribution of $(T_n)$ is of the form
$N(0,\effFI_{\scrS}^{-1})\ast M$ for some probability distribution
$M$.
\end{theorem}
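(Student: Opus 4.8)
The plan is to deduce the semiparametric statement from the parametric convolution theorem (Theorem~\ref{thm:conv}), applied separately inside each smooth submodel $\gamma\in\scrS$, and then to assemble the resulting family of submodel-wise conclusions into a single convolution statement by exploiting the convex-cone hypothesis on the tangent set. First I would fix a sequence $(T_n)$ that is regular with respect to $\scrS$ and observe that, evaluated at $h=0$, regularity is simply convergence of $n^{1/2}(T_n-\tht_0)$ under $P_0=P_{\tht_0,\eta_0}$ to one fixed limit law $L_{\tht_0}$; in particular this limit is common to all submodels and is the object whose covariance and convolution structure we must pin down.

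For the covariance bound I would work one submodel at a time. Each $\gamma\in\scrS$ is by construction a LAN one-parameter model through $P_0$ with score $\score$ and Fisher information $P_0\score^2$, and along $\gamma$ the sequence $(T_n)$ is regular in the sense of Definition~\ref{def:regular}. Theorem~\ref{thm:conv} then gives a decomposition $L_{\tht_0}=N(0,(P_0\score^2)^{-1})\ast M_\gamma$, whence the asymptotic covariance of $L_{\tht_0}$ dominates $(P_0\score^2)^{-1}$. Since this holds for every $\gamma$, the covariance dominates the supremum of $(P_0\score^2)^{-1}$ over $\scrS$, which by the definition (\ref{eq:effinf}) of the efficient Fisher information equals $\effFI_{\scrS}^{-1}$; a minimizing sequence of submodels, whose scores converge in $L_2(P_0)$ to $\effscore_{\scrS}$, realizes this supremum in the limit.

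The harder half is the full convolution structure $L_{\tht_0}=N(0,\effFI_{\scrS}^{-1})\ast M$, and this is where the convex-cone assumption enters. Here I would not be content with scalar submodels: I would select finitely many tangent directions whose linear span approximates $\effscore_{\scrS}$, and use the convex-cone property of the tangent set to guarantee that these directions, and the combinations needed, arise from an honest finite-dimensional parametric submodel of $\scrP$. Applying the multivariate parametric convolution theorem (again Theorem~\ref{thm:conv}, read through the Gaussian limit experiment of Theorem~\ref{thm:limexprep}) inside this finite-dimensional submodel yields $L_{\tht_0}=N(0,\Sigma_m)\ast M_m$ with Gaussian covariance $\Sigma_m\downarrow\effFI_{\scrS}^{-1}$ as the approximation is refined. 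A characteristic-function argument, together with tightness of the factors $(M_m)$ forced by the fixed law $L_{\tht_0}$, then lets me pass to the limit and extract a single distribution $M$ with $L_{\tht_0}=N(0,\effFI_{\scrS}^{-1})\ast M$.

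The main obstacle I anticipate is precisely this last assembly step. Each scalar submodel only bounds dispersion in its own direction, and the efficient score $\effscore_{\scrS}$ is a priori only an element of the $L_2(P_0)$-closure of the score collection, not the score of any single submodel; so the delicate points are, first, using convexity of the tangent cone to realize finite-dimensional submodels whose efficient information genuinely converges to $\effFI_{\scrS}$, and second, controlling the convolution factors $M_m$ well enough that their limit is a bona fide probability distribution rather than merely a sub-probability. Without the convex-cone hypothesis one still obtains the covariance lower bound, but the clean convolution form may fail, which is exactly why the second assertion of the theorem is stated conditionally on it.
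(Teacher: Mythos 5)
A preliminary remark: the paper contains no proof of theorem~\ref{thm:semiconvolution} at all --- it is quoted from van~der~Vaart (1988) \cite{vdVaart88} (theorem~25.20 in \cite{vdVaart98}), so your attempt can only be measured against that source and the paper's surrounding discussion. Your second paragraph is correct and is exactly the standard argument the paper sketches before the statement: theorem~\ref{thm:conv} applied inside each one-dimensional $\gamma\in\scrS$ gives $L_{\tht_0}=N(0,I_\gamma^{-1})\ast M_\gamma$, and the supremum of $I_\gamma^{-1}$ over $\scrS$ equals $\effFI_{\scrS}^{-1}$ by (\ref{eq:effinf}) (you also rightly read the bound as $\effFI_{\scrS}^{-1}$ rather than the $\effFI_{\scrS}$ literally written in the statement).

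The convolution half has a genuine gap, located exactly at the step you flag as delicate. The convex-cone property does \emph{not} produce ``an honest finite-dimensional parametric submodel'' spanning chosen tangent directions: it only guarantees that each fixed nonnegative combination $\sum_i t_i g_i$ is again a tangent direction, i.e.\ the score of some \emph{one-dimensional} path. Joint LAN convergence along such paths yields a Gaussian limit experiment indexed by a convex cone with the true point at its \emph{apex}, not by an open subset of $\RR^m$; theorem~\ref{thm:conv} and theorem~\ref{thm:limexprep} presuppose an open parameter set and equivariance under \emph{all} shifts, so they cannot simply be cited for such a boundary-indexed experiment. The difficulty is real: equivariance is known only for shifts inside the cone, while in the general formulation of \cite{vdVaart98} the efficient direction lies only in the \emph{closed linear span} of the tangent set, not necessarily in the cone, so the single-direction bounds you already possess need not combine into the factor $N(0,\effFI_{\scrS}^{-1})$. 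What the cited proof actually does, and what your proposal lacks, is this: Le~Cam's third lemma turns regularity into a characteristic-function shift identity valid for shifts in the cone; analyticity in the (complexified) shift variable then extends that identity from the cone to its closed linear span --- convexity is used precisely here, to ensure the known shifts form a set with nonempty relative interior so the continuation is legitimate --- and only then does projection onto the efficient direction release the Gaussian factor. Two further points. First, your monotonicity is reversed: the $\Sigma_m$ \emph{increase} to $\effFI_{\scrS}^{-1}$, since enlarging a submodel can only decrease information. Second, in the paper's own formulation your detour is unnecessary: because (\ref{eq:effinf}) places $\effscore_{\scrS}$ in the $L_2(P_0)$-closure of the scores of the $\tht$-parametrized submodels themselves, you may pick a minimizing sequence $\gamma_m\in\scrS$ with $I_{\gamma_m}\downarrow\effFI_{\scrS}$, apply theorem~\ref{thm:conv} in each $\gamma_m$, and conclude with precisely the characteristic-function/L\'evy-continuity argument of your final step (continuity at $t=0$ of the limit settles the tightness worry). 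In other words, your first and last steps already assemble into a complete proof of the paper's version of the statement; it is the finite-dimensional construction in between that would fail.
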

If the collection $\scrS$
of smooth submodels is too small, the efficient Fisher information
relative to $\scrS$ is too optimistic and, hence, $\effFI_{\scrS}$
does not capture the semiparametric information bound, \ie\
$\effFI_{\scrS}$ does not give rise to a \emph{sharp} bound on the
asymptotic variance of regular estimators. To illustrate, consider
an $\scrS$ consisting only of $\gamma(\tht)=P_{\tht,\eta_0}$: in that
case $\effFI_{\scrS}=I_{\tht_0,\eta_0}$, the Fisher information 
associated with the score for $\tht$. In such cases, optimal
regular sequences $(T_n)$ in theorem~\ref{thm:semiconvolution} (in
the sense that $M=\delta_0$) do not exist in general (see, however,
the definition of \emph{adaptivity} in section~\ref{sec:pert}).
To continue the above illustration, generically
semiparametric estimators for
$\tht$ do not achieve the information bound $I_{\tht_0,\eta_0}$; that
bound is associated with (parametric) estimation of $\tht$ in the
presence of a \emph{known} nuisance $\eta_0$. To avoid this situation,
one aims to reveal a class $\scrS$ of smooth submodels that is
sufficiently rich.
\begin{definition}{\it (Efficient score and Fisher information)}\\
The efficient score function and efficient Fisher information
relative to the \emph{maximal} $\scrS$ containing all LAN
submodels are referred to as {\it the} efficient score function
and \emph {the} efficient Fisher information, denoted by
$\effscore_{\tht_0,\eta_0}$ and $\effFI_{\tht_0,\eta_0}$ respectively.
\end{definition}
The above implies a strategy for proving semiparametric efficiency:
we make a clever proposal for a sequence of
estimators $(T_n)$ and for a collection $\scrS$ of smooth submodels
such that $(T_n)$ is regular with respect to every $\gamma\in\scrS$
\emph{and} attains the associated information bound, \ie\ $(T_n)$
is asymptotically normal with variance $\effFI_{\scrS}$. By implication,
the collection $\scrS$ is ``rich enough'' and $(T_n)$ is efficient.
(Compare this with
the manner in which we concluded that, under the conditions of
theorem~\ref{thm:omle}, the parametric ML estimator is efficient.) 
\begin{theorem}
{\rm (Semiparametric efficiency)}\\
Let $\scrS$ denote a collection of smooth submodels of $\scrP$ with
corresponding efficient Fisher information $\effFI_{\scrS}$.
Let $(T_n)$ be a regular estimator sequence for the parameter of
interest. If,
\[
  n^{1/2}(T_n-\tht_0)\convweak{\tht_0,\eta_0}N(0,\effFI_{\scrS}^{-1}),
\]
then $\effFI_{\tht_0,\eta_0}=\effFI_{\scrS}$ and $(T_n)$ is best-regular.
\end{theorem}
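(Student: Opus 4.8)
The plan is to pit two monotonicity facts for the efficient Fisher information against one another. Write $\scrS_{\max}$ for the maximal collection of all LAN submodels through $P_{\tht_0,\eta_0}$, so that $\effFI_{\tht_0,\eta_0}=\effFI_{\scrS_{\max}}$ by definition, and read ``$(T_n)$ is a regular estimator sequence for the parameter of interest'' in its full semiparametric sense, \ie\ as regularity with respect to every $\gamma\in\scrS_{\max}$. The two ingredients are then the containment $\scrS\subseteq\scrS_{\max}$ and the convolution bound of Theorem~\ref{thm:semiconvolution} applied at $\scrS_{\max}$.

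First I would record the easy inequality. Enlarging a collection of submodels enlarges the associated set of score functions, so the infimum defining the efficient Fisher information in (\ref{eq:effinf}) can only decrease; since $\scrS\subseteq\scrS_{\max}$ this gives
\[
  \effFI_{\tht_0,\eta_0}=\effFI_{\scrS_{\max}}\leq\effFI_{\scrS},
\]
with the Loewner order understood when $\Tht\subset\RR^k$ (the same argument applied to $a^T\effscore$ for each $a\in\RR^k$ yields the matrix inequality).

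For the reverse inequality I would invoke Theorem~\ref{thm:semiconvolution} with the collection $\scrS_{\max}$. Since $(T_n)$ is regular with respect to $\scrS_{\max}$, that theorem bounds its asymptotic covariance from below by the inverse efficient Fisher information $\effFI_{\tht_0,\eta_0}^{-1}$. By hypothesis the limit law of $n^{1/2}(T_n-\tht_0)$ is exactly $N(0,\effFI_{\scrS}^{-1})$, with covariance $\effFI_{\scrS}^{-1}$, so $\effFI_{\scrS}^{-1}\geq\effFI_{\tht_0,\eta_0}^{-1}$; inverting, which reverses the Loewner order on positive-definite matrices, gives $\effFI_{\scrS}\leq\effFI_{\tht_0,\eta_0}$. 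Combined with the easy inequality this yields $\effFI_{\tht_0,\eta_0}=\effFI_{\scrS}$ (in particular $\effFI_{\tht_0,\eta_0}$ is non-singular). Best-regularity is then immediate: $(T_n)$ is regular and, by the identity just proved, its limit $N(0,\effFI_{\scrS}^{-1})=N(0,\effFI_{\tht_0,\eta_0}^{-1})$ attains the sharp semiparametric bound, which is exactly what it means for a regular sequence to be best-regular.

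The main obstacle is conceptual rather than computational and lies entirely in the reading of ``regular'': the argument closes only because $(T_n)$ is assumed regular against the full maximal collection, since the convolution bound we need is the one relative to $\scrS_{\max}$. Were one to assume only regularity with respect to the given $\scrS$, Theorem~\ref{thm:semiconvolution} would deliver merely the bound $\effFI_{\scrS}^{-1}$, which the hypothesis already saturates, and would give no leverage on $\effFI_{\tht_0,\eta_0}$. Thus the content of the statement is that global regularity, together with attainment of $\effFI_{\scrS}^{-1}$ for one cleverly chosen $\scrS$, certifies after the fact that $\scrS$ was already rich enough to capture the true semiparametric information bound.
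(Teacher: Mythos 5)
Your proposal is correct and is essentially the argument the paper intends: the paper states this theorem without a detailed proof, relying precisely on the two ingredients you use, namely the monotonicity remark that $\effFI_{\scrS}$ decreases as $\scrS$ is enriched, and the convolution bound of Theorem~\ref{thm:semiconvolution} applied to the maximal collection of LAN submodels, which together force $\effFI_{\scrS}\leq\effFI_{\tht_0,\eta_0}\leq\effFI_{\scrS}$ and hence best-regularity. Your closing observation that ``regular estimator sequence for the parameter of interest'' must be read as regularity with respect to the \emph{maximal} collection is also the right (and necessary) interpretation --- with regularity only relative to $\scrS$ the claim would fail, e.g.\ for $\scrS=\{\tht\mapsto P_{\tht,\eta_0}\}$ and an estimator exploiting knowledge of $\eta_0$.
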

Like in the parametric case, semiparametric estimators 
$(T_n)$ for $\tht_0$ are best-regular {\em if and only if} the $(T_n)$ 
are asymptotically linear, that is,
\begin{equation}
  \label{eq:semiaslin}
  n^{1/2}(T_n-\tht_0) = \frac{1}{\sqrt{n}}\sum_{i=1}^n
    \effFI_{\tht_0,\eta_0}^{-1}\effscore_{\tht_0,\eta_0}(X_i) + o_{P_\tht}(1).
\end{equation}
(For a proof, see lemma~25.23 in \cite{vdVaart98}.)

Efficiency in semiparametric statistics has yielded a rich
literature \cite{Bickel98}, of which a large part concerns the
so-called \emph{calculus of scores}: in order to study sets of
tangents $\scrS$ and to control expansions of likelihoods, a
theory has been built up around score functions and Fisher
information coefficients in non-parametric models. The central
fact of that theory is the orthogonality of the the efficient
score to pure-nuisance scores. More precisely,
$\effscore_{\tht_0,\eta_0}$ is orthogonal in $L_2(P_{\tht_0,\eta_0})$
to all scores $g$ associated with smooth submodels of the form
$\gamma=\{P_{\tht_0,\eta_t}:t\in I\}$ (for some neighbourhood $I$ of
$0$). (If not, one would be able to redefine
all other smooth curves in such a way that the efficient Fisher
information would be strictly smaller than before.) In fact,
$\effscore_{\tht_0,\eta_0}$ equals the $L_2(P_0)$-projection of the
ordinary score $\score_{\tht_0,\eta}$ onto the (closed) complement
of the span of all scores $g$ associated with variation of the
nuisance.


\section{Posterior consistency under perturbation}
\label{sec:pert} 

To discuss efficiency in Bayesian semiparametric context we introduce
a form of posterior convergence that describes contraction around a
curve rather than to a point. Alternatively one may think of this
type of convergence as posterior consistency in the nuisance model
with random perturbations of the $\tht$-parameter. The curve in
question is a so-called least-favourable submodel.

\subsection{Least-favourable submodels}
\label{sub:lfsm}

The conclusion of the previous section suggests one reasons as follows:
in order to find a semiparametric efficient estimator, one would
like to concentrate on a single smooth submodel $\tilde{\gamma}$ for
which the Fisher information equals the efficient Fisher information.
If it exists, such a $\tilde{\gamma}$ is called a
\emph{least-favourable submodel} (see Stein (1956) \cite{Stein56}
and \cite{Bickel98,vdVaart98}). Any estimator sequence that is
best-regular in $\tilde{\gamma}$ is automatically best-regular for
$\tht_0$ in $\scrP$, based on theorem~\ref{thm:semiconvolution}. 

To illustrate, consider the class of so-called \emph{adaptive}
problems (Bickel (1982) \cite{Bickel82} and section~2.4 of Bickel
\ea\ \cite{Bickel98})): in adaptive estimation problems, the
least-favourable direction \emph{equals} the $\tht$-direction
and $\tilde\gamma=\{\,P_{\tht,\eta_0}:\tht\in\Tht\,\}$ is a 
least-favourable submodel. The efficient Fisher information
equals the ordinary Fisher information for $\tht$, 
signalling that estimation of $\tht$ is equally hard whether the
true nuisance $\eta_0$ is known or not. An example of an adaptive
semiparametric problem is Stein's original \emph{symmetric location
problem} (see section~\ref{sub:three}): apparently, knowledge of
the details of the symmetric density $\eta_0$ cannot be used to
improve asymptotic performance of location estimators.

This useful perspective is not limited to the class of adaptive
problems, as long as one is willing to re-define the nuisance
parameter somewhat: if there exists an open neighbourhood $U_0$
of $\tht_0$ and a map $\eta^*:U_0\rightarrow H$ such that
$\tilde\gamma=\{P_{\tht,\eta^*(\tht)}:\tht\in U_0\}$ forms a
least-favourable submodel, it is possible to define an {\em adaptive}
re\-para\-metri\-zation: for all $\tht\in U_0$, $\eta\in H$:
\begin{equation}
  \label{eq:repara}
  (\tht,\eta(\tht,\zeta)) = (\tht,\eta^*(\tht)+\zeta),
  \quad (\tht,\zeta(\tht,\eta)) = (\tht,\eta-\eta^*(\tht)),
\end{equation}
inviting the notation $Q^*_{\tht,\zeta}=P_{\tht,\eta^*(\tht)+\zeta}$. With
$\zeta=0$, $\tht\mapsto Q_{\tht,0}$ describes the least-favourable 
submodel, implying that estimation of $\tht$ in the
model for $Q_{\tht,0}$ is adaptive. With a non-zero value of $\zeta$,
$\tht\mapsto Q_{\tht,\zeta}$ describes a version of the least-favourable
submodel translated over a nuisance direction.
Somewhat disappointingly, in many semiparametric problems
least-favourable submodels like $\tilde\gamma$ do not exist.
But even if \emph{exactly least-favourable} submodels do not exist
in a given problem, \emph{approximately least-favourable} submodels
may be substituted (see section~\ref{sec:ilan}).

At this stage, we leave the above remark for what it is and switch to
the Bayesian perspective on semiparametric questions. Assuming
measurability of the map $(\tht,\eta)\mapsto P_{\tht,\eta}$, we place
a product prior $\Pi_\Tht\times\Pi_H$ on $\Tht\times H$ to define
a prior on $\scrP$ and calculate the posterior, in particular, the
marginal posterior for the parameter of interest ($A=B\times H$
in (\ref{eq:posterior}), for measurable subsets $B$ of $\Tht$).
Asymptotically the full posterior concentrates around least-favourable
submodels. To see why, let us assume that for each $\tht$ in a
neighbourhood $U_0$ of $\tht_0$, there exists a minimizer
$\eta^*(\tht)$ of the Kullback-Leibler divergence,
\begin{equation}
  \label{eq:minKL}
  -P_0\log\frac{p_{\tht,\eta^*(\tht)}}{p_{\tht_0,\eta_0}}
    = \inf_{\eta\in H} \Bigl(-P_0\log\frac{p_{\tht,\eta}}{p_{\tht_0,\eta_0}}\Bigr),
\end{equation}
giving rise to a submodel $\scrP^*=\{P^*_\tht=P_{\tht,\eta^*(\tht)}:
\tht\in U_0\}$. As is well-known \cite{Severini92}, if $\scrP^*$ is
smooth it constitutes a least-favourable submodel and scores along
$\scrP^*$ are efficient. In the following, we refer to $\scrP^*$ as
a ``least-favourable submodel'' (whether it is smooth or not).
\begin{example}{\it (Partial linear regression, cont.)}\\
The partial linear model has a well-defined least-favourable
submodel $\scrP^*$: for any $\tht$ and $\eta$,
\[
  -P_{\tht_0,\eta_0} \log(p_{\tht,\eta}/p_{\tht_0,\eta_0})=\ft12
  P_{\tht_0,\eta_0}( (\tht-\tht_0)U + (\eta-\eta_0)(V))^2,
\]
so that for fixed $\tht$, minimal KL-divergence over $H$
obtains at $\eta^*(\tht) = \eta_0 - (\tht-\tht_0)\,{\rm E}[U|V]$,
$P_0$-almost-surely. This defines a smooth least-favourable submodel
$\scrP^*=\{P_{\tht,\eta^*(\tht)}:\tht\in\Tht\}$. The efficient score
function equals $\effscore_{\tht_0,\eta_0}=e(U-E[U|V])$ and the
KL-divergence of $P^*_\tht$ with respect to $P_0$ is,
\[
  -P_{\tht_0,\eta_0} \log(p^*_\tht/p_{\tht_0,\eta_0})
    = \ft12 \effFI_{\tht_0,\eta_0} (\tht-\tht_0)^2,
\]
with the efficient Fisher information as the coefficient of the
leading, second order. The absence of a linear term characterizes
least-favourable submodels.
\end{example}
\begin{example}{\it (Normal location mixtures, cont.)}\\
We view $\scrD[0,1]$ as a convex, closed subset of the unit sphere in the
dual of $C[0,1]$, the space of all continuous
functions on $[0,1]$ with uniform norm. As a consequence of the
Banach-Alaoglu theorem, $\scrD[0,1]$ is weak-$\ast$ compact
\cite{Dunford58}. Fix $\sigma\in\Sigma$ and consider the map
$\scrD[0,1]\rightarrow\scrP:F\mapsto P_{\sigma,F}$, \cf\ (\ref{eq:nlm}).
Let $x$ be given; $z\mapsto\phi_\sigma(x-z)$ is
bounded, so if $(F_\al)_{\al\in I}$ converges weak-$\ast$ to $F$
in $\scrD[0,1]$ then $p_{\sigma,F_\al}(x)\rightarrow p_{\sigma,F}(x)$,
which implies that $\log (p_{\sigma,F_\al}/p_{\sigma,F})(x)\rightarrow0$.
Using the bracket $[U,L]$ of (\ref{eq:envelope}), it is easily seen
that $P_{\sigma',G}\log(U/L)<\infty$ for all $\sigma'\in\Sigma$ and
$G\in\scrD[0,1]$. Hence, by dominated convergence,
\[
  P_0\log\frac{p_{\sigma,F_\al}}{p_{\sigma,F}}\rightarrow0,
\]
so that the map $F\mapsto -P_0\log(p_{\sigma,F}/p_0)$ is weak-$\ast$
continuous. Conclude that for every $\sigma\in\Sigma$, there exists
an $F^*(\sigma)$ that minimizes the Kullback-Leibler divergence with
respect to $P_0$, \ie\ there exists a ``least-favourable submodel''
$\scrP^*=\{P_{\sigma,F^*(\sigma)}:\sigma\in\Sigma\}$. To show that
the $\scrP^*$ is Hellinger continuous, note that for all
$P,Q\in\scrP$ and all $a>0$,
\[
  \int_{\{p/q>a\}} p(x)\Bigl(\frac{p}{q}\Bigr)^\delta\,dx
  \leq \int U(x)\Bigl(\frac{U}{L}\Bigr)^\delta(x)\,dx < \infty,
\]
for $\delta\in(0,1]$ such that $\delta\leq (\sigma_+/\sigma_-)^2-1$.
According to theorem~5 of Wong and Shen (1995) \cite{Wong95}, there
exists a constant $C$ and an $\ep>0$ such that for all
$P,Q\in\scrP$ with $H(P,Q)<\ep$,
\[
  -P\log\frac{q}{p} \leq C\,H^2(P,Q)\,\log \frac1{H(P,Q)}.
\]
Let $\sigma\in\Sigma$ be given and consider,
\[
  \begin{split}
  H(P^*_\sigma,P_0)&\leq -P_0\log\frac{p^*_\sigma}{p_0}
    = \inf_{F\in\scrD[0,1]} -P_0\log\frac{p_{\sigma,F}}{p_0}\\
    &\leq C\inf_{F\in\scrD[0,1]} H(P_{\sigma,F},P_0)\,
      \log \frac1{H(P_{\sigma,F},P_0)}\\
    &\leq C\,H(P_{\sigma,F_0},P_0)\,\log \frac1{H(P_{\sigma,F_0},P_0)}.
  \end{split}
\]
In the mixture model,
\[
  H(P_{\sigma,F_0},P_0) \leq \| p_{\sigma,F_0} - p_{\sigma_0,F_0} \|_{1,\mu}
    \leq \| \phi_{\sigma} - \phi_{\sigma_0} \|_{1,\mu} = o(1),
\]
as $\sigma\rightarrow\sigma_0$. It follows that the dependence
$\sigma\mapsto P_{\sigma}^*$ is Hellinger continuous. Note that we
have not shown $\scrP^*$ to be smooth in any sense.
\end{example}
\begin{example}{\it (Support boundary estimation, cont.)}\\
In the support boundary model, notions of smoothness are lost and
subsection~\ref{sub:effsemi} does not apply: although the central
argument stands (estimation of the parameter of interest in submodels
is easier), a suitable notion of optimality is lacking. The question
remains on which subsets of the model the posterior concentrates. As
it turns out \cite{Kleijn13}, this role is taken over by the
``adaptive'' submodel $\{\,P_{\tht,\eta_0}:\tht\in\Tht\,\}$. 
\end{example}


\subsection{Posterior concentration}
\label{sub:postconc}

Neighbourhoods of $\scrP^*$ are described with Hellinger balls in
$H$ of radius $\rho>0$ around $\eta^*(\tht)$, for all $\tht\in U_0$:
\begin{equation}
  \label{eq:defD}
  D(\tht,\rho)=\{\,\eta\in H\,:\,d_H(\eta,\eta^*(\tht))<\rho\,\}.
\end{equation}
Furthermore, we define for all $\tht\in U_0$ the misspecified
nonparametric models $\scrP_\tht=\{\,P_{\tht,\eta}\,:\,\eta\in H\,\}$.
Kleijn and van~der~Vaart (2006) \cite{Kleijn06} show that the
misspecified posterior on $\scrP_\tht$ concentrates asymptotically
in any Hellinger neighbourhood of the point of minimal
Kullback-Leibler divergence with respect to the true distribution
of the data. Applied to $\scrP_\tht$, we see that, under $P_0$,
Hellinger balls $D(\tht,\rho)$, ($\rho>0$) receive posterior
probability one asymptotically.
We formulate this $\tht$-dependent form of posterior convergence
in terms of $\tht$-conditional posteriors on $H$, given
$\tht=\tht_n(h_n)$ with $h_n=O_{P_0}(1)$. We view posteriors on
the models $\scrP_{\tht_n(h_n)}$ as random order-$n^{-1/2}$
perturbations of the posterior for $\scrP_{\tht_0}$.
\begin{definition}
{\it (Consistency under perturbation)}\\
Given a rate sequence $(\rho_n)$, $\rho_n\downarrow0$, we say that
the conditioned nuisance posterior is {\em consistent under
$n^{-1/2}$-perturbation at rate} $\rho_n$, if,
\begin{equation}
  \label{eq:pertroc}
  \Pi_n\bigl(\,D^c(\tht,\rho_n)\,\bigm|\,
    \tht=\tht_0+n^{-1/2}h_n\,;\,X_1,\ldots,X_n\,\bigr)\convprob{P_0} 0,
\end{equation}
for all bounded, stochastic sequences $(h_n)$.
\end{definition}
Note that consistency under perturbation is a property that expresses
stability of an essential stage of the analysis (in this case,
posterior consistency at rate $\rho_n$) against $n^{-1/2}$-perturbation
of the $\tht$-component. Regularity of estimator sequences,
\cf\ (\ref{eq:regularity}), is another such property and prior
stability (see Lemma~\ref{lem:translate}) is yet another.

For posterior concentration to occur \cite{Schwartz65}
sufficient prior mass must be present in Kullback-Leibler-type
neighbourhoods \cite{Ghosal00,Kleijn06}. Presently these
neighbourhoods take the form.
\begin{equation}
\label{eq:Ksets}
\begin{split}
  K_n(\rho,M) = \Biggl\{
    \eta\in H:
    P_0\biggl(
      \sup_{\|h\|\leq M}&-\log\frac{p_{\tht_n(h),\eta}}{p_{\tht_0,\eta_0}}
      \biggr)\leq\rho^2,\\
    &P_0\biggl(
      \sup_{\|h\|\leq M}-\log\frac{p_{\tht_n(h),\eta}}{p_{\tht_0,\eta_0}}
      \biggr)^2\leq\rho^2
  \Biggr\},
\end{split}
\end{equation}
for $\rho>0$ and $M>0$. The following theorem generalizes the main
theorem in Ghosal, Ghosh and van~der~Vaart (2000) \cite{Ghosal00} to
perturbed setting. 
\begin{theorem} \label{thm:pertroc}
{\rm (Posterior consistency under perturbation at a rate)}\\
Assume that there exist a Hellinger-continuous $\scrP^*$ and a
sequence $(\rho_n)$ with $\rho_n\downarrow0$,
$n\rho_n^2\rightarrow\infty$ such that for all $M>0$ and every bounded, 
stochastic $(h_n)$:
\begin{itemize}
\item[(i)] There exists a constant $K>0$ such that for large enough $n$,
  \begin{equation}
    \label{eq:suffprior}
    \Pi_H\bigl( K_n(\rho_n,M) \bigr) \geq e^{-Kn\rho_n^2}.
  \end{equation}
\item[(ii)] For all $n$ large enough, $N\bigl(\rho_n,H,d_H\bigr)
  \leq e^{n\rho_n^2}$.
\item[(iii)] For all $L>0$ and all bounded, stochastic $(h_n)$,
  \begin{equation}
    \label{eq:Hcone}
    \sup_{\{\eta\in H:d_H(\eta,\eta_0)\geq L\rho_n\}}\,
      \frac{H(P_{\tht_n(h_n),\eta},P_{\tht_0,\eta})}{H(P_{\tht_0,\eta},P_0)}=o(1),
  \end{equation}
  and $d_H(\eta^*(\tht_n(h_n)),\eta_0)=o(\rho_n)$.
\end{itemize}
Then, for every bounded, stochastic $(h_n)$ there exists an $L>0$
such that the conditional nuisance posterior converges as,
\begin{equation}
  \label{eq:rocassert}
  \Pi\bigl(\,D^c(\tht,L\rho_n)\bigm|
    \tht=\tht_0+n^{-1/2}h_n;\,X_1,\ldots,X_n\,\bigr)= o_{P_0}(1),
\end{equation}
under $n^{-1/2}$-perturbation.
\end{theorem}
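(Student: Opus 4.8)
The plan is to reduce the assertion to a uniform statement over deterministic perturbations $\|h\|\le M$ and then run the misspecified posterior-contraction argument of Kleijn and van~der~Vaart (built on Ghosal, Ghosh and van~der~Vaart), viewing the data as generated by $P_0=P_{\tht_0,\eta_0}$, the model as the misspecified nuisance family $\scrP_{\tht_n(h_n)}$, and $\eta^*(\tht_n(h_n))$ as the Kullback--Leibler projection, so that $D(\tht,L\rho_n)$ is exactly a Hellinger neighbourhood of that projection. Since $h_n=O_{P_0}(1)$, pick $M$ with $\limsup_n P_0^n(\|h_n\|>M)$ arbitrarily small; writing the asserted quantity as $\tilde Y_n(h_n)$ with $\tilde Y_n(h)=\Pi(D^c(\tht_n(h),L\rho_n)\mid\tht=\tht_n(h);X)$, on $\{\|h_n\|\le M\}$ one has $\tilde Y_n(h_n)\le\sup_{\|h\|\le M}\tilde Y_n(h)$, so it suffices to prove $\sup_{\|h\|\le M}\tilde Y_n(h)\to0$ in $P_0$-probability for each fixed $M$ and then let $M\to\infty$. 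The uniform suprema built into $K_n(\rho_n,M)$ and the ``for all bounded stochastic $(h_n)$'' clause in~(iii) are precisely what make the ensuing bounds uniform over $\|h\|\le M$, so the stochastic $h_n$ drops out.

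For the denominator I would restrict the evidence integral to $K_n(\rho_n,M)$: the first- and second-moment bounds on $\sup_{\|h\|\le M}-\log(p_{\tht_n(h),\eta}/p_{\tht_0,\eta_0})$ control, uniformly in $\|h\|\le M$, the mean and variance of $\sum_i-\log(p_{\tht_n(h),\eta}/p_0)(X_i)$ by a multiple of $n\rho_n^2$, so a Chebyshev argument yields, on an event $\Omega_n$ with $P_0^n(\Omega_n)\to1$, the simultaneous lower bound $\inf_{\|h\|\le M}\int_H\prod_i(p_{\tht_n(h),\eta}/p_0)(X_i)\,d\Pi_H\ge e^{-(K+c)n\rho_n^2}$ by~(i). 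The second half of~(iii), $d_H(\eta^*(\tht_n(h)),\eta_0)=o(\rho_n)$ uniformly in $\|h\|\le M$, lets me enlarge the numerator domain from the $h$-dependent $D^c(\tht_n(h),L\rho_n)$ to the single shell $E_n:=\{\eta:d_H(\eta,\eta_0)\ge\tfrac12 L\rho_n\}$.

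It then remains to control $\sup_{\|h\|\le M}(1-\phi_n)\int_{E_n}\prod_i(p_{\tht_n(h),\eta}/p_0)(X_i)\,d\Pi_H$ in expectation for a suitable test $\phi_n$. Cover $H$ by $d_H$-balls of radius $\rho_n$; by~(ii) at most $e^{n\rho_n^2}$ are needed. For a centre $\eta_j$ with $r_j:=d_H(\eta_j,\eta_0)\ge\tfrac12 L\rho_n$, the first half of~(iii) together with two triangle inequalities gives, uniformly over the ball and over $\|h\|\le M$, that $H(P_{\tht_n(h),\eta},P_0)$ stays bounded below by a positive multiple of $r_j$, so the whole small neighbourhood $\{P_{\tht_n(h),\eta}:\|h\|\le M,\ d_H(\eta,\eta_j)<\rho_n\}$ remains Hellinger-separated from $P_0$ at a scale proportional to $r_j$. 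The Birg\'e--Le~Cam construction then supplies a single likelihood-ratio test $\phi_{n,j}$ with $P_0^n\phi_{n,j}\le e^{-c'r_j^2n}$ and $P_0^n[(1-\phi_{n,j})\sup_{\|h\|\le M}\prod_i(p_{\tht_n(h),\eta}/p_0)]\le e^{-c'r_j^2n}$ uniformly over the ball; taking $\phi_n=\max_j\phi_{n,j}$ over balls meeting $E_n$ and summing over dyadic scales of $r_j$ (each with at most $e^{n\rho_n^2}$ balls) gives $P_0^n\phi_n\to0$ and an integrated numerator bound $e^{-c''L^2n\rho_n^2}$ for $L$ large. Passing the fixed, unperturbed entropy~(ii) to the moving family $P_{\tht_n(h),\eta}$, uniformly over the $\tht$-neighbourhood, is the \emph{main obstacle}; condition~(iii) is exactly the hypothesis that renders the $O(Mn^{-1/2})$ perturbation negligible against the separation $r_j$ being tested.

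Finally I would assemble the pieces. From $\tilde Y_n(h)\le\phi_n+(1-\phi_n)N_{E_n}(h)/D(h)$, where $N_{E_n}(h)=\int_{E_n}\prod_i(p_{\tht_n(h),\eta}/p_0)(X_i)\,d\Pi_H$, the term $\phi_n\to0$ in $P_0$-probability, while on $\Omega_n$ the estimate $P_0^n[\sup_{\|h\|\le M}(1-\phi_n)N_{E_n}(h)]\le e^{-c''L^2n\rho_n^2}$ (obtained by interchanging the supremum with the $\Pi_H$-integral and applying the sup-likelihood-ratio bound above), divided by the uniform denominator $e^{-(K+c)n\rho_n^2}$ and fed into Markov's inequality, gives $\sup_{\|h\|\le M}(1-\phi_n)N_{E_n}(h)/D(h)\le e^{-(c''L^2-K-c)n\rho_n^2}\to0$ once $L$ is chosen with $c''L^2>K+c$, using $n\rho_n^2\to\infty$. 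Hence $\sup_{\|h\|\le M}\tilde Y_n(h)\to0$ in $P_0$-probability; combined with the reduction of the first paragraph and $\limsup_n P_0^n(\|h_n\|>M)\downarrow0$ as $M\to\infty$, this establishes~(\ref{eq:rocassert}).
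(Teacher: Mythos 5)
Your proposal matches the paper's own proof in essentially every respect: the paper defers the argument to Bickel and Kleijn (2012) \cite{Bickel12}, and the proof given there is precisely the perturbed Ghosal--Ghosh--van~der~Vaart scheme you describe --- a Kullback--Leibler prior-mass lower bound for the denominator built on the $\sup_{\|h\|\leq M}$-moments defining $K_n(\rho_n,M)$, a Hellinger-ball cover of the nuisance space tested against $P_0$ by Birg\'e--Le~Cam tests, condition~(\ref{eq:Hcone}) serving to transfer those tests to the $n^{-1/2}$-perturbed alternatives, and a Markov-inequality assembly with $L$ taken large relative to $K$. In particular, your identification of the transfer of fixed-$\tht_0$ tests to the moving family $P_{\tht_n(h),\eta}$ as the main obstacle, resolved exactly by condition~(\ref{eq:Hcone}), coincides with the paper's own description of where the generalization of Birg\'e and Le~Cam's construction to the $n^{-1/2}$-perturbed context takes place.
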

The proof of this theorem can be found in \cite{Bickel12} and proceeds
through the construction of tests based on the Hellinger geometry of
the model, generalizing the approach of Birg\'e \cite{Birge83,Birge84}
and Le~Cam \cite{LeCam86} to $n^{-1/2}$-perturbed context. Consider the
problem of testing/estimating $\eta$ when $\tht_0$ is known: we cover
the nuisance model $\scrP_{\tht_0}$ by a minimal
collection of Hellinger balls, all of radius $(\rho_n)$, each of which
is testable against $P_0$ with power bounded by $\exp(-\ft14\,n\,H^2(P_0,B))$
\cite{LeCam86}. The tests for the covering Hellinger balls are combined
into a single test for the alternative $\{P:H(P,P_0)\geq\rho_n\}$ against
$P_0$. The order of the cover controls the power of the combined test.
Therefore the construction requires an upper bound to Hellinger metric
entropy numbers \cite{Birge83,Birge84,Kolmogorov61,vdVaart96},
\begin{equation}
  \label{eq:minimaxrate}
  N\bigl(\rho_n,\scrP_{\tht_0},H\bigr) \leq e^{n\rho_n^2},
\end{equation}
which is interpreted as indicative of the nuisance model's complexity
in the sense that the lower bound to the collection of rates $(\rho_n)$
solving (\ref{eq:minimaxrate}), is the Hellinger minimax rate for
estimation of $\eta_0$. In the $n^{-1/2}$-perturbed
problem, the alternative does not just consist of the complement of a
Hellinger-ball in the nuisance factor $H$, but also has an extent
in the $\tht$-direction shrinking at rate $n^{-1/2}$. Condition
(\ref{eq:Hcone}) guarantees that Hellinger covers of $\scrP_{\tht_0}$
are large enough to accommodate the $\tht$-extent of the alternative,
the implication being that the test sequence one constructs
for the nuisance in case $\tht_0$ is known, can also be used when
$\tht_0$ is known only up to $n^{-1/2}$-perturbation.
Geometrically, (\ref{eq:Hcone}) requires that $n^{-1/2}$-perturbed versions 
of the nuisance model are contained in a narrowing sequence of metric 
cones based at $P_0$. In differentiable models, the Hellinger distance
$H(P_{\tht_n(h_n),\eta},P_{\tht_0,\eta})$ is typically of order $O(n^{-1/2})$ 
for all $\eta\in H$. So if, in addition, $n\rho_n^2\rightarrow\infty$, 
limit (\ref{eq:Hcone}) is expected to hold pointwise in $\eta$. Then
only the uniform character of (\ref{eq:Hcone}) truly forms a condition.

For corollary~\ref{cor:simplesbvm} we have a version of
theorem~\ref{thm:pertroc} that only asserts consistency under
$n^{-1/2}$-perturbation at {\em some} rate while relaxing bounds
for prior mass and entropy. In the statement
of the corollary, we make use of the family of Kullback-Leibler
neighbourhoods that would play a role for the posterior of the
nuisance if $\tht_0$ were known, $K(\rho) = K_{n=1}(\rho,M=0)$.
\begin{corollary}\label{cor:conspert}
{\rm (Posterior consistency under perturbation)}\\
Assume that there exists a Hellinger-continuous $\scrP^*$ and that
for all $\rho>0$, $N\bigl(\rho,H,d_H\bigr) < \infty$,
$\Pi_H( K(\rho)) > 0$ and,
\begin{itemize}
\item[(i)] For all $M>0$ there is an $L>0$ such that for all
  $\rho>0$ and large enough $n$, $K(\rho) \subset K_n(L\rho,M)$.
\item[(ii)] For every bounded random $(h_n)$,
  $\sup_{\eta\in H} H(P_{\tht_n(h_n),\eta},P_{\tht_0,\eta})=O(n^{-1/2})$.
\end{itemize}
Then there exists a sequence
$(\rho_n)$, $\rho_n\downarrow0$, $n\rho_n^2\rightarrow\infty$, such
that the conditional nuisance posterior converges under
$n^{-1/2}$-perturbation at rate $(\rho_n)$.
\end{corollary}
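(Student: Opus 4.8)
The plan is to deduce the corollary from Theorem~\ref{thm:pertroc} by exhibiting a single rate sequence $(\rho_n)$, $\rho_n\downarrow0$ with $n\rho_n^2\to\infty$, for which conditions (i)--(iii) of that theorem hold for every $M>0$ and every bounded stochastic $(h_n)$; the conclusion (\ref{eq:rocassert}) then gives consistency under $n^{-1/2}$-perturbation at rate $L\rho_n$ (relabel to obtain the stated form). The observation that organizes the argument is that the hypotheses split into two groups: those satisfied automatically once $n\rho_n^2\to\infty$ (using the relaxed assumptions of the corollary), and those that instead require $\rho_n$ to decrease \emph{slowly}, so as to dominate the now only finite, rather than exponentially controlled, entropy and prior-mass quantities. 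The whole difficulty is to choose one monotone $(\rho_n)$ meeting both demands at once.

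First I would dispose of the conditions of the first group. For the cone condition (\ref{eq:Hcone}), on the set $\{d_H(\eta,\eta_0)\ge L\rho_n\}$ the denominator satisfies $H(P_{\tht_0,\eta},P_0)=d_H(\eta,\eta_0)\ge L\rho_n$, while assumption (ii) bounds the numerator by $\sup_\eta H(P_{\tht_n(h_n),\eta},P_{\tht_0,\eta})=O(n^{-1/2})$ uniformly; hence the ratio is of order $(\,L\,\sqrt{n\rho_n^2}\,)^{-1}$, which tends to zero precisely because $n\rho_n^2\to\infty$. For the second half of (iii), identifiability together with the fact that $\eta_0$ is the unique Kullback--Leibler minimizer of $\eta\mapsto -P_0\log(p_{\tht_0,\eta}/p_{\tht_0,\eta_0})$ gives $\eta^*(\tht_0)=\eta_0$; a triangle inequality then yields
\[
  d_H\bigl(\eta^*(\tht_n(h_n)),\eta_0\bigr)
    \le H\bigl(P_{\tht_0,\eta^*(\tht_n(h_n))},P_{\tht_n(h_n),\eta^*(\tht_n(h_n))}\bigr)
      + H\bigl(P^*_{\tht_n(h_n)},P_0\bigr),
\]
whose first term is $O(n^{-1/2})$ by (ii) and whose second term is bounded by the deterministic modulus $\omega^*(\delta):=\sup_{|\tht-\tht_0|\le\delta}H(P^*_\tht,P_0)$ at $\delta=Mn^{-1/2}$; Hellinger continuity of $\scrP^*$ makes $\omega^*(\delta)\downarrow0$ as $\delta\downarrow0$. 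Thus the $O(n^{-1/2})$ contributions are $o(\rho_n)$ whenever $n\rho_n^2\to\infty$, and only $\omega^*(Mn^{-1/2})=o(\rho_n)$ remains to be arranged by the construction (using that $(h_n)$ is bounded in probability, so we may restrict to $|h_n|\le M$ and exploit monotonicity of $\omega^*$).

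Next I would build the rate by a blockwise diagonal argument. Fix any $\epsilon_k\downarrow0$. For each $M\in\NN$ let $L(M)$ be the constant furnished by assumption (i), so that $K(\epsilon_k/L(M))\subset K_n(\epsilon_k,M)$ for large $n$; by hypothesis $\Pi_H(K(\epsilon_k/L(M)))>0$ and $N(\epsilon_k,H,d_H)<\infty$, so $-\log\Pi_H(K(\epsilon_k/L(M)))$ and $\log N(\epsilon_k,H,d_H)$ are finite. Choose $n_k\uparrow\infty$ so large that for every $n\ge n_k$,
\[
  n\epsilon_k^2\ge k,\qquad
  n\epsilon_k^2\ge\log N(\epsilon_k,H,d_H),\qquad
  n\epsilon_k^2\ge-\log\Pi_H\bigl(K(\epsilon_k/L(M))\bigr)\ (M\le k),
\]
and also $\omega^*(M n_k^{-1/2})\le\epsilon_k/k$ for all $M\le k$. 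Setting $\rho_n=\epsilon_k$ for $n_k\le n<n_{k+1}$ produces a non-increasing $\rho_n\downarrow0$ with $n\rho_n^2\to\infty$. For this rate, condition (ii) of Theorem~\ref{thm:pertroc} is immediate; for each fixed $M$ and all $k\ge M$ one has $\Pi_H(K_n(\rho_n,M))\ge\Pi_H(K(\rho_n/L(M)))\ge e^{-n\rho_n^2}$, so condition (i) holds with $K=1$; and $\omega^*(Mn^{-1/2})\le\epsilon_k/k=o(\rho_n)$ along each block, completing (iii) via the preceding paragraph.

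The main obstacle is exactly this simultaneous diagonalization. One must absorb the $M$-dependence of $L(M)$ (larger $M$ forces a smaller, less probable neighbourhood $K(\epsilon_k/L(M))$, hence a larger deficit $-\log\Pi_H$) while keeping $(\rho_n)$ monotone and letting it decay slowly enough that $n\rho_n^2$ overtakes both $\log N(\epsilon_k,H,d_H)$ and every prior-mass deficit, yet fast enough that $\rho_n\downarrow0$ and the final assertion (\ref{eq:rocassert}) with radius $L\rho_n$ stays informative. The schedule that treats the first $k$ values of $M$ inside the $k$-th block resolves the tension; a secondary, routine point is the passage from the deterministic bounds above to the stochastic $(h_n)$ through $h_n=O_{P_0}(1)$ and the monotonicity of $\omega^*$.
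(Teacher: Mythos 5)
Your proposal is correct and takes essentially the approach the paper intends (the paper itself defers this proof to Bickel and Kleijn (2012)): deduce the corollary from theorem~\ref{thm:pertroc} by a blockwise/diagonal choice of $(\rho_n)$ decreasing slowly enough that the finite entropy satisfies $N(\rho_n,H,d_H)\leq e^{n\rho_n^2}$ and the positive prior mass, via the inclusion $K(\rho_n/L(M))\subset K_n(\rho_n,M)$ from condition {\it (i)}, satisfies the lower bound $e^{-n\rho_n^2}$, while condition {\it (ii)} together with Hellinger continuity of $\scrP^*$ yields the cone condition and $d_H(\eta^*(\tht_n(h_n)),\eta_0)=o(\rho_n)$ exactly as you argue. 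The one step you should spell out is the final ``relabel'': the constant $L$ in (\ref{eq:rocassert}) is allowed to depend on $(h_n)$, so to obtain the single rate demanded by the definition of consistency under perturbation you should replace $\rho_n$ by $a_n\rho_n$ with $a_n\rightarrow\infty$ chosen so slowly that $a_n\rho_n\downarrow0$ and $n(a_n\rho_n)^2\rightarrow\infty$ (for instance $a_n=\epsilon_k^{-1/2}$ on the $k$-th block of your construction); then for every fixed $L$ one has $D^c(\tht,a_n\rho_n)\subset D^c(\tht,L\rho_n)$ for large $n$, and monotonicity of the posterior probability in the radius turns the $(h_n)$-dependent conclusion of the theorem into the uniform statement of the corollary.
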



\subsection{Application in examples}
\label{sub:pcapp}

We apply corollary~\ref{cor:conspert} in partial linear regression,
normal location mixtures and support boundary estimation.
\begin{example}{\it (Partial linear regression, cont.)}\\
Note that for all
$\eta_1,\eta_2\in H$, $d_H(\eta_1,\eta_2)\leq-P_{\tht_0,\eta_2}\log(p_{\tht_0,\eta_1}/p_{\tht_0,\eta_2})
=\ft12\|\eta_1-\eta_2\|_{2,P}^2
\leq\ft12\|\eta_1-\eta_2\|_{\infty}^2$. Hence, for any $\rho>0$,
$N\bigl(\rho,\scrP_{\tht_0},d_H)\leq N\bigl((2\rho)^{1/2},H,
\|\cdot\|_{\infty}\bigr)<\infty$. Similarly, one shows that for
all $\eta$ both $-P_0\log(p_{\tht_0,\eta}/p_{\tht_0,\eta_0})$ and
$P_0(\log(p_{\tht_0,\eta}/p_{\tht_0,\eta_0}))^2$ are bounded by
$(\ft12+D^2)\|\eta-\eta_0\|_{\infty}^2$. Hence, for any 
$\rho>0$, $K(\rho)$ contains a $\|\cdot\|_{\infty}$-ball. Assuming
that $\eta_0\in{\rm supp}({\Pi_H})$, we see that the primary conditions
of corollary~\ref{cor:conspert} hold. Next, note that for $M>0$,
$n\geq1$, $\eta\in H$,
\begin{equation}
  \label{eq:pertKL}
  \begin{split}
  \sup_{\|h\|\leq M} &-\log\frac{p_{\tht_n(h),\eta}}{p_{\tht_0,\eta_0}}
    = \frac{M^2}{2n}U^2+\frac{M}{\sqrt{n}}\bigl|U(e-(\eta-\eta_0)(V))\bigr|\\
      &-e(\eta-\eta_0)(V) +\ft12 (\eta-\eta_0)^2(V),
  \end{split}
\end{equation}
where $e\sim N(0,1)$ under $P_{\tht_0,\eta_0}$. Note that $H$ is
totally bounded in $C[0,1]$, so that there exists a constant $D>0$
such that $\|H\|_{\infty}\leq D$. Together with the help of the
independence of $e$ and $(U,V)$ and the assumptions on the
distribution of $(U,V)$, it is then verified that condition~{\it (i)}
of corollary~\ref{cor:conspert} holds. Since
$({p_{\tht_n(h),\eta}}/{p_{\tht_0,\eta}}(X))^{1/2}=
\exp{((h/2\sqrt{n})eU-(h^2/4n)U^2)}$, one derives  
the $\eta$-independent upper bound,
\[
  H^2\bigl( P_{\tht_n(h_n),\eta} , P_{\tht_0,\eta} \bigr)
  \leq \frac{M^2}{2n}PU^2 + \frac{M^3}{6n^2} PU^4 = O(n^{-1}),
\]
for all bounded, stochastic $(h_n)$, so that condition~{\it (ii)}
of corollary~\ref{cor:conspert} holds.
\end{example}
\begin{example}{\it (Normal location mixtures, cont.)}\\
The Hellinger-continuity of $\scrP^*$ has been established
earlier. The nuisance space is of finite $d_H$-entropy (see
theorem~3.1 in Ghosal and van~der~Vaart (2001,2007) \cite{Ghosal01,Ghosal07}).
Let $\alpha$ be a finite Borel measure on $[0,1]$ that
dominates the Lebesgue measure and define $\Pi_H=D_\alpha$ to be the
corresponding Dirichlet process prior for $F$. This prior
satisfies $\Pi_H(K(\rho))>0$ for all $\rho>0$ (as implied by
the proof of theorem~5.1 in \cite{Ghosal01}, or subsection~3.2
in Kleijn and van~der~Vaart (2006) \cite{Kleijn06}). A lengthy
but elementary calculation shows that there exist constants
$K_1,K_2>0$ such that for all $F\in\scrD[0,1]$,
\[
  \begin{split}
  P_0\Bigl(\sup_{\|h\|\leq M} \log\frac{p_{\sigma_n(h),F}}{p_0}(X) \Bigr)
    &\leq P_0\log\frac{p_{\sigma_0,F}}{p_0}(X) + K_1 \frac{M^2}{n}, \\
  P_0\Bigl(\sup_{\|h\|\leq M} \log\frac{p_{\sigma_n(h),F}}{p_0}(X) \Bigr)^2
    &\leq P_0\Bigl(\log\frac{p_{\sigma_0,F}}{p_0}(X) \Bigr)^2 
      + K_2 \frac{M^2}{n},
  \end{split}
\]
for large enough $n$,
so that condition~{\it (i)} of corollary~\ref{cor:conspert} holds.
Let $(h_n)$ be given. Lemma~17.3 in \cite{Strasser85}
says that if $P$, $Q$ are distributions for $(X,Z)$ and $Y=f(X,Z)$
has induced distributions $P'$, $Q'$, then $H(P',Q')\leq H(P,Q)$.
We apply this to $(X,Z)$ and $X$ to obtain,
\[
  \begin{split}
  \sup_{F\in\scrD[0,1]} n\,H^2(&P_{\sigma_n(h_n),F},P_{\sigma_0,F})\\
    &\leq \sup_{F\in\scrD[0,1]}\iint n\,\Bigl( \phi_{\sigma_n(h_n)}(x-z)^{1/2}
    -\phi_{\sigma_0}(x-z)^{1/2} \Bigr)^2\,dx\,dF(z)\\
    &= \sup_{z\in[0,1]} n\,H^2\bigl(\Phi_{\sigma_n(h_n)},\Phi_{\sigma_0}\bigr)
    =O(1),
  \end{split}
\]
\ie\ condition~{\it (ii)} of corollary~\ref{cor:conspert} holds.
\end{example}
\begin{example}{\it (Support boundary estimation, cont.)}\\
Given $0 < S < \alpha$, we define $\rho_0^2 = \alpha-S>0$.
Consider the distribution $Q$ with Lebesgue density $q>0$ given by
$q(x) = \rho_0^2e^{-\rho_0^2x}$ for $x \geq 0$. Then the family $\scrF=
\{x\mapsto\sqrt{{\eta_\score}/{q}(x)}:\score\in\scrL\}$
forms a subset of the collection of all monotone functions
$\RR\mapsto[0,C]$, where $C$ is fixed and depends on $\alpha$, and $S$.
Referring to Theorem~2.7.5 in van~der~Vaart and Wellner (1996)
\cite{vdVaart96}, we conclude that the $L_2(Q)$-bracketing entropy
$N_{[\, ]}(\ep,\scrF,L_2(Q))$ of $\scrF$ is finite for all $\ep>0$.
Noting that,
\[
 d_H(\eta,\eta_0)^2 = d_H\bigl(\eta_\score,\eta_{\score_0}\bigr)^2
   = \int_\RR \Bigl( \sqrt\frac{\eta_\score}{q}(x)
     - \sqrt\frac{\eta_{\score_0}}{q}(x) \Bigr)^2\,dQ(x),
\]
it follows that $N(\rho,H,d_H)=N(\rho,\scrF,L_2(Q))\leq
N_{[\, ]}(2\rho,\scrF,L_2(Q))<\infty$. Conclude that for
all $\rho>0$, $N(\rho,H,d_H)<\infty$. Since $\scrL\subset
{\rm supp}(\Pi_{\scrL})$, $\Pi(K(\rho))>0$ for all $\rho>0$.
Let $\rho>0$ be given and let $\score\in\scrL$ be such that
$\|\score-\score_0\|_\infty<\rho^2$. Without reproducing the
derivation (see \cite{Kleijn13}),
we state that
\[
 \begin{split}
 -P_0\log\frac{p_{\tht_0,\eta}}{p_{\tht_0,\eta_0}}
   &\leq 2\rho^2\bigl( P_0(X-\tht_0) + O(\rho^2) \bigr),\\
 P_0\Bigl(\log\frac{p_{\tht_0,\eta}}{p_{\tht_0,\eta_0}}\Bigr)^2 
   &\leq
     \rho^4\bigl( P_0(X-\tht_0)^2 + 3P_0(X-\tht_0) + O(\rho^2)\bigr),
 \end{split}
\]
which proves that there exists a constant $L_1$ such that
$\{ \eta_\score \in H :\|\score-\score_0\|_\infty\leq\rho^2 \} \subset
K(L_1\rho)$. Let $M>0$ be given. With reasoning very similar to
that which led to (\ref{eq:pertKL}), one shows (see \cite{Kleijn13})
that there exists an $L_2>0$ such that $K(L_1\rho)\subset K_n(L_2\rho,M)$.
According to Lemma~4.4 in \cite{Kleijn13},
\[
  n\,H^2\bigl( P_{\tht_n(h_n),\eta}, P_{\tht_0,\eta}\bigr)
    \leq 2\,M\,\gamma_{\tht_0,\eta} + O(n^{-1}),
\]
for all $\eta\in H$ and all bounded, stochastic $(h_n)$. According
to Corollary~3.1 in \cite{Kleijn13} (which is completely analogous
to Corollary~\ref{cor:conspert} above), posterior consistency under
$n^{-1}$-perturbation obtains at some rate $(\rho_n)$.  
\end{example}


\section{Local expansions for integrated likelihoods}
\label{sec:ilan} 

Since the prior is of product form, the marginal posterior for the
parameter $\tht\in\Tht$ depends on the nuisance factor only through
the integrated likelihood ratio,
\begin{equation}
  \label{eq:defSn}
    S_n:\Tht\rightarrow\RR: \tht\mapsto \int_H\prod_{i=1}^n
    \frac{p_{\tht,\eta}}{p_{\tht_0,\eta_0}}(X_i)\,d\Pi_H(\eta).
\end{equation}
(The localized version of $S_n$ is denoted $h\mapsto s_n(h)$,
$s_n(h)=S_n(\tht_0+n^{-1/2}h)$.) The quantity $S_n$ plays a central
role in this section and the next, similar to that of the
{\em profile likelihood} in semiparametric maximum-likelihood methods
(see, \eg, Severini and Wong (1992) \cite{Severini92} and Murphy
and van~der~Vaart (2000) \cite{Murphy00}), in the sense that
$S_n$ embodies the intermediate stage between nonparametric and
semiparametric steps of the estimation procedure. Presently,
we are interested in the local behaviour of $S_n$: the smoothness
condition in the parametric Bernstein-Von~Mises theorem is a
LAN expansion of the likelihood, which is replaced in
semiparametric context by a stochastic LAN expansion of the integrated
likelihood (\ref{eq:defSn}). In this section, we consider sufficient
conditions on model and prior for the following property.
\begin{definition}
{\it (Integral local asymptotic normality (ILAN))}\\
The quantity $S_n$ has the {\em integral LAN} property if
$s_n$ allows an expansion of the form,
\begin{equation}
  \label{eq:ilan}
  \log \frac{s_n(h_n)}{s_n(0)} = \frac{1}{\sqrt{n}}\sum_{i=1}^\infty
    h_n^T\effscore_{\tht_0,\eta_0}
    -\ft12 h_n^T \effFI_{\tht_0,\eta_0} h_n + o_{P_0}(1),
\end{equation}
for every random sequence $(h_n)\subset\RR^k$ of order $O_{P_0}(1)$.
\end{definition}
In Bickel and Kleijn (2012) \cite{Bickel12} the LAN analysis departs
from the assumption that the model possesses a {\it smooth}
least-favourable submodel for which we can establish posterior
consistency under perturbation. As we have seen above, the
partial-linear regression model has such a smooth least-favourable
submodel and corollary~5.2 of \cite{Bickel12} applies. But
in semiparametric mixture models (and this is generic
in semiparametric models), no such guarantee can be given: although a
Hellinger-continuous $\scrP^*$ exists for which consistency
under perturbation obtains, {\it smoothness} of this curve has
not been shown and corollary~5.2 of \cite{Bickel12} cannot be
invoked. Below, we generalize the analysis to
models that do not possess smooth least-favourable submodels.

\subsection{Approximately least-favourable submodels}
\label{sub:approx}

Theorem~\ref{thm:ilanone} below proves the ILAN property
under three conditions, consistency under $n^{-1/2}$-perturbation
for the nuisance posterior, sLAN expansions of model distributions
and a domination condition. If $\scrP^*$ can be approximated by
sLAN models (in a suitable sense, see properties
(\ref{eq:approxscore})--(\ref{eq:approxU})) then one can lift the
sLAN expansion of the integrand in (\ref{eq:defSn}) to an ILAN
expansion of the form (\ref{eq:ilan}).
Since the posterior concentrates in neighbourhoods of $\scrP^*$,
only the least-favourable expansion at $\eta_0$ contributes to
(\ref{eq:ilan}) asymptotically. For this reason, the integral
LAN expansion is determined by the efficient score function
(and not some other influence function). Ultimately, occurrence
of the efficient score lends the marginal posterior (and
statistics based upon it) properties of frequentist inferential
optimality, in accordance with theorem~\ref{thm:semiconvolution}.

In the derivation of theorem~\ref{thm:ilanone}, the model is
reparametrized \cf\ (\ref{eq:repara}) with approximately least-favourable
models replacing $\eta^*$. To be more precise, we consider $n$-dependent
model reparametrizations of the following form: for all $\tht\in U_0$,
$\eta\in H$,
\begin{equation}
  \label{eq:reparan}
  (\tht,\eta_n(\tht,\zeta)) = (\tht,\eta_n(\tht)+\zeta),
  \quad (\tht,\zeta_n(\tht,\eta)) = (\tht,\eta-\eta_n(\tht)),
\end{equation}
depending on models $\scrP_n=
\{P_{n,\tht}=P_{\tht,\eta_n(\tht)}:\tht\in U_0\}$ and we introduce
the notation $Q_{n,\tht,\zeta}=P_{\tht,\eta_n(\tht)+\zeta}$.
\begin{definition}
{\it (Approximate least-favourability)}\\
Given a Hellinger-continuous $\scrP^*$, a sequence of submodels
$(\scrP_n)$ \cf\ (\ref{eq:reparan}) is
called \emph{approximately least-favourable}
at $P_0$ if it satisfies the following conditions. For all $n\geq1$,
$P_0\in\scrP_n$ (\ie\ $\eta_n(\tht_0)=\eta_0$) and the model is
sLAN at $\tht_0$ in the $\scrP_n$-direction(s) for all $\zeta$ in a
Hellinger neighbourhood of $\zeta=0$: noting that
$Q_{n,\tht_0,\zeta}=P_{\tht_0,\eta_0+\zeta}$ for all $n\geq1$, we
assume there exist
$g_{n,\zeta}\in L_2(P_{\tht_0,\eta_0+\zeta})$ such that for every random
$(h_n)$ that is bounded in $P_{\tht_0,\eta_0+\zeta}$-probability,
\begin{equation}
  \label{eq:qlan}
  \log\prod_{i=1}^n\frac{q_{n,\tht_n,\zeta}}{q_{n,\tht_0,\zeta}}(X_i)
  = \frac{1}{\sqrt{n}}\sum_{i=1}^n h_n^Tg_{n,\zeta}(X_i)
    -\ft12\,h_n^TI_{n,\zeta}h_n+ R_n(h_n,\zeta),
\end{equation}
where $\tht_n=\tht_0+n^{-1/2}h_n$, $I_{n,\zeta}=Q_{n,\tht_0,\zeta}
(g_{n,\zeta} g_{n,\zeta}^T)$ and
$R_n(h_n,\zeta)=o_{P_{\tht_0,\eta_0+\zeta}}(1)$. Furthermore,
the models $\scrP_n$ are assumed to approximate $\scrP^*$
such that,
\begin{itemize}
\item[{\it (i)}] the scores
converge to the efficient score function in $L_2(P_0)$,
\begin{equation}
  \label{eq:approxscore}
  P_0\|g_{n,0}-\effscore_{\tht_0,\eta_0}\|^2 \rightarrow 0,
\end{equation}
\end{itemize}
and there exists a rate $(\rho_n)$, $\rho_n\downarrow0$
and $n\rho_n^2\rightarrow\infty$ such that for all $M>0$:
\begin{itemize}
\item[{\it (ii)}] the models $\scrP_n$ approximate
  $\scrP^*$,
\begin{equation}
  \label{eq:approxhell}
  \sup_{\|h\|\leq M}
  d_H\bigl(\eta_n(\tht_n(h)),\eta^*(\tht_n(h))\bigr) = o (\rho_n),
\end{equation}
\item[{\it (iii)}] and the quantities $U_n(\rho,h)$ defined by
(see {\it Notation and conventions}),
\begin{equation}
  \label{eq:approxU}
  U_n(\rho,h) = \sup_{\zeta\in B(\rho)} Q_{n,\tht_0,\zeta}^n\Biggl( 
        \prod_{i=1}^n \frac{q_{n,\tht_n(h),\zeta}}
                          {q_{n,\tht_0,\zeta}}(X_i)\Biggr),
\end{equation}
satisfies $U(\rho_n,h_n)=O(1)$ for all bounded, stochastic $(h_n)$.
\end{itemize}
\end{definition}
The last requirement may be hard to interpret; however, for a
single, fixed $\zeta$, the condition says that the
likelihood ratio remains integrable if we replace $\tht_n(h_n)$
by the maximum-likelihood estimator $\hat{\tht}_n(X_1,\ldots,X_n)$
(see lemma~\ref{lem:Udom}); condition (\ref{eq:approxU}) imposes
this uniformly over neighbourhoods of $\zeta=0$. 
The following lemma shows that first-order Taylor expansions of
likelihood ratios combined with a uniform limit for certain
Fisher information coefficients suffices to satisfy $U(\rho_n,h_n)=O(1)$
for all bounded, stochastic $(h_n)$ and \emph{every}
$\rho_n\downarrow0$.
\begin{lemma}\label{lem:Udom}
Let $\Tht$ be open in $\RR$. Assume that there exists a $\rho>0$
such that for all $\zeta\in B(\rho)=\{\zeta:d_H(\eta_0+\zeta,\eta_0)
<\rho\}$ and all $x$ in the samplespace, the maps
$\tht\mapsto\log(q_{n,\tht,\zeta}/q_{n,\tht_0,\zeta})(x)$
are continuously differentiable on $[\tht_0-\rho,\tht_0+\rho]$ with 
Lebesgue-integrable derivatives $g_{n,\tht,\zeta}(x)$ such that,
\begin{equation}
\label{eq:Udomcond}
  \sup_{\zeta\in B(\rho)}\,\,\sup_{\{\tht:|\tht-\tht_0|<\rho\}}\,
    Q_{n,\tht,\zeta}(g_{n,\tht,\zeta})^2=O(1).
\end{equation}
Then, for every $\rho_n\downarrow0$
and all bounded, stochastic $(h_n)$, $U_n(\rho_n,h_n)=O(1)$.
\end{lemma}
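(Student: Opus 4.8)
I need to show that the condition \eqref{eq:Udomcond}—a uniform bound on the Fisher-information-type quantity $Q_{n,\tht,\zeta}(g_{n,\tht,\zeta})^2$ over a Hellinger ball in $\zeta$ and a parameter neighbourhood in $\tht$—implies $U_n(\rho_n,h_n)=O(1)$ for arbitrary rate $\rho_n\downarrow0$ and arbitrary bounded stochastic $(h_n)$.

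Let me unpack the definitions. The quantity $U_n(\rho,h)$ is a supremum over $\zeta\in B(\rho)$ of $Q_{n,\tht_0,\zeta}^n$ applied to the product likelihood ratio $\prod_{i=1}^n (q_{n,\tht_n(h),\zeta}/q_{n,\tht_0,\zeta})(X_i)$.

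Now here's the key observation. By the definition of $Q^n_{\tht_0,\zeta}$ applied to a function (from the *Notation and conventions*: $P^n_{\tht_n(h_n),\eta}f = \int f \cdot (dP^n_{\tht_n(h_n(\omega)),\eta}/dP_0^n)\,dP_0^n$), when the "function" being integrated is itself the likelihood ratio product, we're really just integrating a likelihood ratio against the base measure. Specifically, for fixed $\zeta$:
$$Q_{n,\tht_0,\zeta}^n\Biggl(\prod_{i=1}^n \frac{q_{n,\tht_n(h),\zeta}}{q_{n,\tht_0,\zeta}}(X_i)\Biggr) = \int \prod_{i=1}^n \frac{q_{n,\tht_n(h),\zeta}}{q_{n,\tht_0,\zeta}}(X_i) \, \prod_{i=1}^n q_{n,\tht_0,\zeta}(X_i)\,d\mu^n = \int \prod_{i=1}^n q_{n,\tht_n(h),\zeta}(X_i)\,d\mu^n = 1.$$

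Wait—this would make $U_n \equiv 1$ trivially, since integrating a density over its own space gives 1. But that can't be right, because then the lemma would be vacuous and the hypothesis \eqref{eq:Udomcond} would be unnecessary.

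Let me reconsider. The subtlety must be that $h$ is *stochastic*: $h_n$ depends on the sample $\omega$. Looking again at the Notation: $P_{\tht_n(h_n),\eta}^n f$ with *stochastic* $h_n$ means $\int f(\omega)\,(dP_{\tht_n(h_n(\omega)),\eta}^n/dP_0^n)(\omega)\,dP_0^n(\omega)$. So when $h_n = h_n(\omega)$ is random, the "density" $dP^n_{\tht_n(h_n(\omega)),\zeta}/dP^n_0$ is evaluated at the *same* $\omega$ used in the outer integration. The likelihood ratio product inside $U_n$ is a function of $\omega$, and so is the Radon–Nikodym factor, and they're evaluated at the same $\omega$. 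This is no longer a simple normalization to 1.

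Actually, re-examining: the function being integrated, $\prod (q_{n,\tht_n(h),\zeta}/q_{n,\tht_0,\zeta})(X_i)$, is itself $dP^n_{\tht_n(h),\zeta}/dP^n_{\tht_0,\zeta}$ as a function of the sample. With $h=h_n(\omega)$ stochastic, and the measure $Q^n_{\tht_0,\zeta}$ being $P^n_{\tht_0,\eta_0+\zeta}$ (since $Q_{n,\tht_0,\zeta}=P_{\tht_0,\eta_0+\zeta}$), the expression $U_n$ is genuinely nontrivial: it's the expectation under $P^n_{\tht_0,\eta_0+\zeta}$ of a likelihood ratio where the numerator's parameter is shifted by a *sample-dependent* amount.

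Here is my plan. For fixed $\zeta$ and deterministic $h$, write
$$
\log\prod_{i=1}^n \frac{q_{n,\tht_n(h),\zeta}}{q_{n,\tht_0,\zeta}}(X_i)
= \sum_{i=1}^n \int_{\tht_0}^{\tht_n(h)} g_{n,\tht,\zeta}(X_i)\,d\tht,
$$
using the fundamental theorem of calculus on the continuously differentiable map $\tht\mapsto\log(q_{n,\tht,\zeta}/q_{n,\tht_0,\zeta})(x)$ guaranteed by hypothesis. Since $\tht_n(h)-\tht_0 = n^{-1/2}h$, a first-order Taylor expansion along the path gives a linear term $n^{-1/2}h\cdot\sum_i g_{n,\tht_0,\zeta}(X_i)$ plus a remainder controlled by the modulus of continuity of $g_{n,\tht,\zeta}$ over the shrinking interval $[\tht_0,\tht_0+n^{-1/2}h]$.

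The clean route is to exponentiate and integrate. The plan is to show $U_n(\rho_n,h_n)=O(1)$ by bounding, uniformly over $\zeta\in B(\rho_n)$, the expectation $Q^n_{\tht_0,\zeta}(dP^n_{\tht_n(h_n),\zeta}/dP^n_{\tht_0,\zeta})$. Because $h_n$ is bounded in probability, I first reduce to a deterministic bound: on the event $\{|h_n|\le M\}$ (which carries probability arbitrarily close to 1 by tightness), I control the likelihood ratio uniformly over $|h|\le M$. The key estimate is that the expected likelihood ratio along a $\tht$-perturbation of size $O(n^{-1/2})$ stays bounded. Concretely, by the fundamental-theorem-of-calculus representation and Fubini, I would bound
$$
Q^n_{\tht_0,\zeta}\Biggl(\prod_{i=1}^n \frac{q_{n,\tht(h),\zeta}}{q_{n,\tht_0,\zeta}}(X_i)\Biggr)
\le \sup_{|\tht-\tht_0|<\rho} Q_{n,\tht,\zeta}\Biggl(\frac{q_{n,\tht,\zeta}}{q_{n,\tht_0,\zeta}}\cdot\text{(factors)}\Biggr),
$$
and the uniform second-moment control \eqref{eq:Udomcond} on $g_{n,\tht,\zeta}$ is exactly what pins down the exponential moment of the single-observation log-likelihood-ratio increment, preventing the product of $n$ such increments (each $O(n^{-1/2})$ in the relevant scale) from blowing up.

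\textbf{The main obstacle}, as I see it, is handling the stochastic nature of $h_n$ rigorously—the Radon–Nikodym factor and the integrand are coupled through the same $\omega$, so I cannot simply condition on $h_n$ and pull it out of the expectation. The natural device is a union/truncation argument: since $h_n=O_{P_0}(1)$, for any $\varepsilon>0$ there is $M$ with $P_0(|h_n|>M)<\varepsilon$; on the complementary event I dominate the stochastic integrand by its supremum over the deterministic compact set $\{|h|\le M\}$, reducing to the uniform-in-$h$ deterministic estimate. The second obstacle is that the uniformity must hold simultaneously over $\zeta\in B(\rho_n)$ with $\rho_n\to 0$; but because \eqref{eq:Udomcond} is assumed uniform over the *fixed* ball $B(\rho)$, and $B(\rho_n)\subset B(\rho)$ for large $n$, this uniformity is inherited directly. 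The essential analytic content is therefore the passage from the uniform $L_2$-bound on the derivative $g_{n,\tht,\zeta}$ to a uniform bound on the expected likelihood ratio, which I would obtain via the Taylor/FTC representation combined with Jensen's or Cauchy–Schwarz inequality on the single-observation increment, then exponentiating through the product structure.
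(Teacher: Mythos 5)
Your diagnosis of where the difficulty lies is exactly right: for deterministic $h$ the expectation of the likelihood-ratio product is identically one, so all of the content of the lemma sits in the coupling between the stochastic $h_n(\omega)$ and the sample; your observation that uniformity in $\zeta$ is inherited because $B(\rho_n)\subset B(\rho)$ for large $n$ also matches the paper. However, the analytic core of your plan has a genuine gap that the hypothesis (\ref{eq:Udomcond}) cannot fill. After truncating to $\{|h_n|\leq M\}$, what you must bound is the expectation of a \emph{supremum},
\[
  Q^n_{n,\tht_0,\zeta}\Bigl(\,\sup_{|h|\leq M}\,
    \prod_{i=1}^n\frac{q_{n,\tht_n(h),\zeta}}{q_{n,\tht_0,\zeta}}(X_i)\Bigr),
\]
and this is not the supremum of the (trivially unit) fixed-$h$ expectations — it is a maximal inequality. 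Your proposed route to it, namely FTC applied to the \emph{log}-likelihood followed by ``exponentiating through the product structure'' with the claim that (\ref{eq:Udomcond}) ``pins down the exponential moment,'' fails at precisely this point: a second-moment bound on the score does not control any exponential moment. With $g_{n,\tht,\zeta}$ only in $L_2$, the quantity $Q^n_{n,\tht_0,\zeta}\exp\bigl(2Mn^{-1/2}\bigl|\sum_i g_{n,\tht',\zeta}(X_i)\bigr|\bigr)$ can be infinite for every $n$, and no application of Jensen or Cauchy--Schwarz to single-observation increments rescues this, since each such step still requires the moment generating function of $g$ to exist.

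The paper's proof avoids exponentiation altogether by differentiating the likelihood-ratio \emph{product} rather than its logarithm:
\[
  \prod_{i=1}^n\frac{q_{n,\tht_n(h_n),\zeta}}{q_{n,\tht_0,\zeta}}(X_i)-1
  = \int_{\tht_0}^{\tht_n(h_n)}\sum_{i=1}^n g_{n,\tht',\zeta}(X_i)
    \prod_{j=1}^n\frac{q_{n,\tht',\zeta}}{q_{n,\tht_0,\zeta}}(X_j)\,d\tht'.
\]
Taking absolute values, the random endpoint is dominated by the fixed interval $[\tht_0-M/\sqrt{n},\tht_0+M/\sqrt{n}]$; Fubini then exchanges $Q^n_{n,\tht_0,\zeta}$ with $\int d\tht'$, and for each \emph{deterministic} $\tht'$ the remaining likelihood-ratio factor is absorbed by a change of measure, $Q^n_{n,\tht_0,\zeta}\bigl[(\cdot)\prod_j (q_{n,\tht',\zeta}/q_{n,\tht_0,\zeta})(X_j)\bigr]=Q^n_{n,\tht',\zeta}(\cdot)$. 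What is left is $\int Q^n_{n,\tht',\zeta}\bigl|\sum_i g_{n,\tht',\zeta}(X_i)\bigr|\,d\tht'$, which Cauchy--Schwarz together with the mean-zero property of the score under its own parameter bounds by $\sqrt{n}\,\bigl(Q_{n,\tht',\zeta}(g_{n,\tht',\zeta})^2\bigr)^{1/2}$ pointwise in $\tht'$; the factor $\sqrt{n}$ is cancelled by the length $2M/\sqrt{n}$ of the integration interval, and (\ref{eq:Udomcond}) makes the bound uniform over $\zeta\in B(\rho)\supset B(\rho_n)$. This change-of-measure step applied to the derivative of the product is exactly what your proposal is missing, and it is what allows the $L_2$ hypothesis to suffice where your route would need exponential moments.
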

\begin{proof} See section~\ref{sec:proofs}.
\end{proof}


\subsection{Integrated local asymptotic normality}
\label{sub:ilan}

Reparametrization leads to $n$- and $\tht$-dependence in the prior for
$\zeta$. Below, it is shown that the prior mass of the relevant
Hellinger neighbourhoods displays a type of stability, under a
condition on the local behaviour of Hellinger distances in the
least-favourable submodel and its approximations. Because these
approximately least-favourable submodels are smooth, typically
$d_H(\eta_n(\tht_n(h_n)),\eta_0)=O(n^{-1/2})$ for all bounded,
stochastic $(h_n)$, which suffices for typical
rates $(\rho_n)$.
\begin{lemma}\label{lem:translate}
{\rm (Prior stability)}\\
Assume that there exists a Hellinger-continuous $\scrP^*$
and approximately least-favourable $\scrP_n$. Let $(h_n)$ be
a bounded, stochastic sequence of perturbations and let $\Pi_H$
be any prior on $H$. For any rate $(\rho_n)$, $\rho_n\downarrow0$ and
$n\rho_n^2\rightarrow\infty$ such that (\ref{eq:approxhell})
is satisfied,
\begin{equation}
  \label{eq:stab}
    \Pi_H\bigl(D(\tht_n(h_n),\rho_n)\bigr)
      =\Pi_H\bigl(D(\tht_0,\rho_n)\bigr) + o(1).
\end{equation}
\end{lemma}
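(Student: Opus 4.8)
The plan is to read off both sides of (\ref{eq:stab}) as prior masses of two Hellinger balls of the \emph{same} radius $\rho_n$ but with different centres: by (\ref{eq:defD}), $D(\tht_n(h_n),\rho_n)$ is centred at $\eta^*(\tht_n(h_n))$ while $D(\tht_0,\rho_n)$ is centred at $\eta^*(\tht_0)=\eta_0$. If the displacement of the centre, $\delta_n:=d_H(\eta^*(\tht_n(h_n)),\eta_0)$, is shown to be $o(\rho_n)$, then a triangle-inequality sandwich combined with monotonicity of $\Pi_H$ will force the two masses to share a common limit, which is exactly (\ref{eq:stab}).

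First I would bound $\delta_n$. Since $\eta_0=\eta_n(\tht_0)$, two applications of the triangle inequality give
\[
  \delta_n \le d_H\bigl(\eta^*(\tht_n(h_n)),\eta_n(\tht_n(h_n))\bigr)
    + H\bigl(P_{\tht_0,\eta_n(\tht_n(h_n))},P_{\tht_n(h_n),\eta_n(\tht_n(h_n))}\bigr)
    + H\bigl(P_{\tht_n(h_n),\eta_n(\tht_n(h_n))},P_{\tht_0,\eta_0}\bigr).
\]
The first term is $o(\rho_n)$ by the approximation hypothesis (\ref{eq:approxhell}). The third term is the Hellinger distance \emph{along} the approximating curve $\scrP_n$, which the sLAN expansion (\ref{eq:qlan}) (at $\zeta=0$, with $\tht_n(h_n)-\tht_0=n^{-1/2}h_n$) renders $O_{P_0}(n^{-1/2})$. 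The second term is a pure $\tht$-shift at fixed nuisance, bounded by $\sup_{\eta}H(P_{\tht_0,\eta},P_{\tht_n(h_n),\eta})=O_{P_0}(n^{-1/2})$, i.e.\ exactly condition~(ii) of corollary~\ref{cor:conspert}. Since $n\rho_n^2\to\infty$ forces $n^{-1/2}=o(\rho_n)$, all three terms are $o(\rho_n)$ and hence $\delta_n=o(\rho_n)$. To accommodate the random $h_n$ I would use $h_n=O_{P_0}(1)$: for each $\ep>0$ pick $M$ with $P_0(\|h_n\|>M)<\ep$ for large $n$, and run the bounds uniformly over $\{\|h\|\le M\}$, so that on $\{\|h_n\|\le M\}$ one has $\delta_n\le\bar\delta_n:=\sup_{\|h\|\le M}d_H(\eta^*(\tht_n(h)),\eta_0)=o(\rho_n)$.

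Granting $\delta_n\le\bar\delta_n$, the triangle inequality yields the nested inclusions
\[
  D(\tht_0,\rho_n-\bar\delta_n)\subseteq D(\tht_n(h_n),\rho_n)\subseteq D(\tht_0,\rho_n+\bar\delta_n),
\]
valid on $\{\|h_n\|\le M\}$ once $n$ is large enough that $\rho_n-\bar\delta_n>0$. Writing $F(\rho)=\Pi_H(D(\tht_0,\rho))$, monotonicity of $\Pi_H$ then sandwiches $\Pi_H(D(\tht_n(h_n),\rho_n))$ between $F(\rho_n-\bar\delta_n)$ and $F(\rho_n+\bar\delta_n)$. As a bounded non-decreasing function, $F$ has a one-sided limit $L=\lim_{\rho\downarrow0}F(\rho)$, and since $\bar\delta_n=o(\rho_n)$ all of $\rho_n$ and $\rho_n\pm\bar\delta_n$ tend to $0$; hence $F(\rho_n)$ and $F(\rho_n\pm\bar\delta_n)$ all converge to $L$. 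The sandwich gives $\Pi_H(D(\tht_n(h_n),\rho_n))\to L$ on $\{\|h_n\|\le M\}$, so $\Pi_H(D(\tht_n(h_n),\rho_n))-F(\rho_n)\to0$ there; letting $\ep\downarrow0$ upgrades this to the $o_{P_0}(1)$ statement (\ref{eq:stab}). Note that this argument needs no control of the prior mass of thin annuli in absolute terms: it exploits only that $F$ has a one-sided limit at $0$, which holds for \emph{any} prior.

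The step I expect to be the main obstacle is the estimate $\delta_n=o(\rho_n)$, and within it the second and third terms above. The subtlety is that $d_H$ is defined with the $\tht$-coordinate frozen at $\tht_0$, whereas the smoothness actually in hand, the sLAN expansion (\ref{eq:qlan}), controls likelihood ratios \emph{along} $\scrP_n$, i.e.\ with $\tht$ and $\eta$ moving together. Separating the genuine nuisance displacement from the spurious $\tht$-shift is what forces the three-term decomposition, and it is precisely here that one must borrow the pure-$\tht$-shift bound of corollary~\ref{cor:conspert}(ii) rather than appeal to Hellinger-continuity of $\scrP^*$ alone, which yields only $\delta_n\to0$ without a rate.
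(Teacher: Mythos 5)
Your proof follows the same architecture as the paper's: reduce (\ref{eq:stab}) to the statement that the two centres $\eta^*(\tht_n(h_n))$ and $\eta_0$ are $o(\rho_n)$ apart in $d_H$, then conclude with a sandwich argument whose only input about the prior is that the mass of thin annuli around $\eta_0$ vanishes. Your closing step, via the monotone function $F(\rho)=\Pi_H(D(\tht_0,\rho))$ and its one-sided limit at $0$, is correct and is exactly the fact the paper uses when it bounds $\bigl|\Pi_H(E_n)-\Pi_H(F_n)\bigr|$ by the mass of the symmetric difference and traps that difference inside the annulus $D(\tht_0,(1+\alpha)\rho_n)\setminus D(\tht_0,(1-\alpha)\rho_n)$.

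The genuine discrepancy lies in the displacement bound, and it cuts against your proposal as a proof of the lemma \emph{as stated}. The paper handles the quantity you split into two terms in a single line: ``since the submodels $\scrP_n$ are sLAN, $d_H\bigl(\eta_n(\tht_n(h_n)),\eta_0\bigr)=O(n^{-1/2})$,'' and then adds $d_H\bigl(\eta^*(\tht_n(h_n)),\eta_n(\tht_n(h_n))\bigr)=o(\rho_n)$ from (\ref{eq:approxhell}); it never appeals to corollary~\ref{cor:conspert}. Your middle term --- the pure $\tht$-shift $H\bigl(P_{\tht_0,\eta_n(\tht_n(h_n))},P_{\tht_n(h_n),\eta_n(\tht_n(h_n))}\bigr)$ --- is controlled by importing condition \textit{(ii)} of corollary~\ref{cor:conspert}, which is \emph{not} among the hypotheses of lemma~\ref{lem:translate}. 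The lemma is stated for any prior, assuming only Hellinger continuity of $\scrP^*$, approximate least-favourability of $(\scrP_n)$, boundedness of $(h_n)$ and (\ref{eq:approxhell}); moreover it is invoked inside theorem~\ref{thm:ilanone}, whose hypotheses likewise do not contain corollary~\ref{cor:conspert}\textit{(ii)} (that condition resurfaces only in corollary~\ref{cor:simplesbvm}, while theorem~\ref{thm:sbvmone} has only the ratio condition (\ref{eq:Hcone})). So, as written, your argument proves the lemma with an extra assumption appended, i.e.\ a strictly weaker statement. Your underlying scruple is real and worth crediting: $d_H$ freezes the $\tht$-coordinate at $\tht_0$, whereas the sLAN expansion (\ref{eq:qlan}) at $\zeta=0$ (via contiguity and the product-measure Hellinger identity) bounds the along-the-curve distance $H\bigl(P_{\tht_n(h_n),\eta_n(\tht_n(h_n))},P_0\bigr)$, and the paper's one-liner silently identifies these two quantities. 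But to count as a proof of the lemma you must either close this hole from the stated hypotheses, or flag the pure-$\tht$-shift bound as an additional condition the lemma secretly requires --- borrowing corollary~\ref{cor:conspert}\textit{(ii)} without licence does neither.
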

\begin{proof} See section~\ref{sec:proofs}.
\end{proof}
Prior stability is part of the construction underpinning the
following theorem which roughly says that in models that allow 
approximately least-favourable submodels, consistency under
$n^{-1/2}$-perturbation is sufficient for the expansions of
the form (\ref{eq:ilan}).
\begin{theorem}\label{thm:ilanone}
{\rm (Integral local asymptotic normality)}\\
Assume that there exists a Hellinger-continuous $\scrP^*$
and approximately least-favourable $\scrP_n$. Furthermore assume
that the posterior is consistent under $n^{-1/2}$-perturbation at
a rate $(\rho_n)$ that is also valid in (\ref{eq:approxhell}) and
(\ref{eq:approxU}).
Then the integral LAN-expansion (\ref{eq:ilan}) holds.
\end{theorem}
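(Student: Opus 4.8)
The plan is to study the localized ratio $s_n(h_n)/s_n(0)$ by factorizing its integrand along the approximately least-favourable direction, so that the sLAN expansion (\ref{eq:qlan}) becomes available, and then to integrate out the nuisance. Writing $\zeta=\eta-\eta_n(\tht_n(h_n))$, so that $p_{\tht_n(h_n),\eta}=q_{n,\tht_n(h_n),\zeta}$ and $p_{\tht_0,\eta_0+\zeta}=q_{n,\tht_0,\zeta}$, I would split the integrand of the numerator as
\[
  \prod_{i=1}^n\frac{p_{\tht_n(h_n),\eta}}{p_{\tht_0,\eta_0}}(X_i)
    =\biggl(\prod_{i=1}^n\frac{q_{n,\tht_n(h_n),\zeta}}{q_{n,\tht_0,\zeta}}(X_i)\biggr)
      \biggl(\prod_{i=1}^n\frac{p_{\tht_0,\eta_0+\zeta}}{p_{\tht_0,\eta_0}}(X_i)\biggr),
\]
the first factor being exactly the object expanded by (\ref{eq:qlan}) and the second a pure-nuisance likelihood ratio at $\tht_0$. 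Before expanding, I would use consistency under $n^{-1/2}$-perturbation to restrict both $s_n(h_n)$ and $s_n(0)$ to the concentration sets $D(\tht_n(h_n),L\rho_n)$ and $D(\tht_0,L\rho_n)$ at negligible relative cost; together with (\ref{eq:approxhell}) and the Hellinger continuity of $\scrP^*$ this confines $\zeta$ to a shrinking Hellinger ball $B(L'\rho_n)$ about the origin, where all the approximation hypotheses are in force.

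On this set I would insert (\ref{eq:qlan}), turning the first factor into $\exp\bigl(n^{-1/2}\sum_i h_n^Tg_{n,\zeta}(X_i)-\ft12 h_n^TI_{n,\zeta}h_n+R_n(h_n,\zeta)\bigr)$, and then replace the score and information by their efficient limits. The linear term is handled by (\ref{eq:approxscore}): since $P_0g_{n,0}=0=P_0\effscore_{\tht_0,\eta_0}$, the difference $n^{-1/2}\sum_i h_n^T(g_{n,0}-\effscore_{\tht_0,\eta_0})(X_i)$ is an empirical-process increment $h_n^T\GG_n(g_{n,0}-\effscore_{\tht_0,\eta_0})$ whose $L_2(P_0)$-norm tends to $0$, hence is $o_{P_0}(1)$; the quadratic coefficient $I_{n,\zeta}$ tends to $\effFI_{\tht_0,\eta_0}$. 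Once the linear and quadratic terms have been shown to equal $n^{-1/2}\sum_i h_n^T\effscore_{\tht_0,\eta_0}(X_i)$ and $\ft12 h_n^T\effFI_{\tht_0,\eta_0}h_n$ up to a $\zeta$-free $o_{P_0}(1)$, the factor $\exp\bigl(n^{-1/2}\sum_i h_n^T\effscore_{\tht_0,\eta_0}(X_i)-\ft12 h_n^T\effFI_{\tht_0,\eta_0}h_n\bigr)$ pulls out of the integral.

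What remains in the numerator is $\int\prod_i(p_{\tht_0,\eta_0+\zeta}/p_{\tht_0,\eta_0})(X_i)\,d\Pi_H(\eta)$ over the perturbed neighbourhood, a re-centred copy of the integral defining $s_n(0)$, the re-centring being the Hellinger-small shift $\eta_n(\tht_n(h_n))-\eta_0$. I would match it to $s_n(0)$ using prior stability (Lemma~\ref{lem:translate}), which asserts exactly that this shift leaves the prior content of the relevant neighbourhoods unchanged up to $o(1)$; combined with the fact that the two concentration sets coincide up to an $o(\rho_n)$ displacement of their centres, the ratio of the leftover integral to $s_n(0)$ is $1+o_{P_0}(1)$. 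Taking logarithms then yields (\ref{eq:ilan}).

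The main obstacle is the passage from the pointwise-in-$\zeta$ control $R_n(h_n,\zeta)=o_{P_{\tht_0,\eta_0+\zeta}}(1)$ furnished by (\ref{eq:qlan}) to the integrated statement that is actually needed, in which $\zeta$ ranges over the random, shrinking posterior support and the remainder is averaged against the nuisance posterior: a sequence that is null pointwise in $\zeta$ need not integrate to a null quantity without an envelope. This is precisely the purpose of the domination condition (\ref{eq:approxU}): the bound $U_n(\rho_n,h_n)=O(1)$ supplies the uniform integrability that lets me interchange the sLAN limit with the nuisance integral and upgrade $R_n(h_n,\cdot)$ (and the residual $\zeta$-dependence of the score substitution) to a genuine integrated $o_{P_0}(1)$, with Lemma~\ref{lem:Udom} providing a checkable sufficient condition. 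Controlling this remainder, rather than the algebra of the expansion, is where the real difficulty lies.
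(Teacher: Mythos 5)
Your proposal follows essentially the same route as the paper's proof: restrict the integrals to the perturbation-consistency concentration sets, reparametrize along the approximately least-favourable submodels, match the shifted pure-nuisance integral to $s_n(0)$ via the prior-stability lemma (Lemma~\ref{lem:translate}), insert the sLAN expansion (\ref{eq:qlan}) and pull out the leading factor, upgrade the pointwise-in-$\zeta$ remainder (and the $\zeta$-dependence of the score term) to an integrated $o_{P_0}(1)$ via the domination condition (\ref{eq:approxU}) with a Fatou-type argument over the shrinking ball, and finally replace $g_{n,0}$ and $I_{n,0}$ by $\effscore_{\tht_0,\eta_0}$ and $\effFI_{\tht_0,\eta_0}$ using (\ref{eq:approxscore}) and Markov's inequality. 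You also correctly identify the integrated remainder control as the crux; the only detail left implicit is the (one-sided) contiguity of $Q^n_{n,\tht_n(h_n),\zeta}$ with respect to $Q^n_{n,\tht_0,\zeta}$ used to transfer the sLAN remainder, which in the LAN setting comes for free from Le~Cam's first lemma.
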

\begin{proof} See section~\ref{sec:proofs}.
\end{proof}
With regard to the nuisance rate $(\rho_n)$, we first note that the
proof of theorem~\ref{thm:ilanone} fails if the slowest rate required
to satisfy (\ref{eq:approxU}) vanishes {\em faster} then the optimal
rate for convergence under $n^{-1/2}$-perturbation (as determined in
(\ref{eq:minimaxrate}) and (\ref{eq:suffprior})). 
However, the rate $(\rho_n)$ does not appear in assertion (\ref{eq:ilan}),
so if said contradiction between conditions (\ref{eq:approxU}) and
(\ref{eq:minimaxrate})/(\ref{eq:suffprior}) does not occur, the
sequence $(\rho_n)$ can remain entirely internal to the proof of
theorem~\ref{thm:ilanone}. More particularly, if condition (\ref{eq:approxU}) 
holds for {\em any} $(\rho_n)$ such that $n\rho_n^2\rightarrow\infty$
(as in lemma~\ref{lem:Udom}), integral LAN only requires consistency
under $n^{-1/2}$-perturbation at {\em some} such $(\rho_n)$. In that
case, we may appeal to corollary~\ref{cor:conspert} instead of
theorem~\ref{thm:pertroc}, thus relaxing conditions on model
entropy and nuisance prior. Lemma~\ref{lem:Udom} and this shortcut
are used in all three examples of subsection~\ref{sub:ilanapp}.


\subsection{Application in examples}
\label{sub:ilanapp}

In the partial linear example $\scrP^*$ is a smooth least-favourable
submodel. As a result, the formulation of \cite{Bickel12} can be
used (or choose $\scrP_n=\scrP^*$ for all $n\geq1$ and apply the
theorems of this paper).
\begin{example}{\it (Partial linear regression, cont.)}\\
For given $(h_n)$, $n\geq1$ and fixed $\zeta$, the submodel
$\tht\mapsto Q_{\tht,\zeta}$ satisfies,
\begin{equation}
  \label{eq:parallellik}
  \begin{split}
  \log&\prod_{i=1}^n\frac{p_{\tht_0+n^{-1/2}h_n,\eta^*(\tht_0+n^{-1/2}h_n)+\zeta}}
    {p_{\tht_0,\eta_0+\zeta}}(X_i)\\
  &= \frac{h_n}{\sqrt{n}}\sum_{i=1}^n g_\zeta(X_i)
    - \ft12 {h_n}^2 P_{\tht_0,\eta_0+\zeta}\, {g_\zeta}^2
    + \ft12 {h_n}^2 \bigl(\PP_n-P\bigr)(U-{\rm E}[U|V])^2,
  \end{split}
\end{equation}
for all stochastic $(h_n)$, with $g_\zeta(X)=e(U-{\rm E}[U|V])$,
$e=Y-\tht_0U-(\eta_0+\zeta)(V)\sim N(0,1)$ under $P_{\tht_0,\eta_0+\zeta}$.
Since $PU^2<\infty$, the last term on the right is
$o_{P_{\tht_0,\eta_0+\zeta}}(1)$ if $(h_n)$ is bounded in
probability. We conclude that $\tht\mapsto Q_{\tht,\zeta}$
is stochastically LAN for all $\zeta$. For any $x\in\RR^3$ and all
$\zeta$, the map 
$\tht\mapsto\log{q_{\tht,\zeta}/q_{\tht_0,\zeta}}(x)$ is continuously 
differentiable on all of $\Tht$, with score $g_{\tht,\zeta}(X)=
e(U-{\rm E}[U|V])+(\tht-\tht_0)(U-{\rm E}[U|V])^2$. Since
$Q_{\tht,\zeta}g_{\tht,\zeta}^2=P(U-{\rm E}[U|V])^2
+(\tht-\tht_0)^2P(U-{\rm E}[U|V])^4$ does not depend on $\zeta$
and is bounded over $\tht\in[\tht_0-\rho,\tht_0+\rho]$,
lemma~\ref{lem:Udom} says that $U(\rho_n,h_n)=O(1)$ for all
$\rho_n\downarrow0$ and all bounded, stochastic $(h_n)$.
Posterior consistency under perturbation and the bound on
Hellinger distances required for prior stability, \cf\
Lemma~\ref{lem:translate}, were shown to be valid in the previous
section. We conclude that the integrated LAN expansion of
(\ref{eq:ilan}) holds.
\end{example}
\begin{example}{\it (Normal location mixtures, cont.)}\\
Let $I$ be a open interval symmetric around $0$. Let
$\psi:I\rightarrow[0,\infty)$ be a probability
density of suitable smoothness.
Given a sequence $(\tau_n)$ of strictly positive $\tau_n$
that decrease to zero monotonously, let $\psi_n:I_n\rightarrow[0,\infty)$
denote the scaled kernel sequence $\psi_n(x)=\tau_n^{-1}\psi(x/\tau_n)$. 
Smooth the mixing distributions $F^*(\sigma)$ with the kernels
$\psi_n$ and shift the resulting curve to compensate smoothing
at $F_0$,
\[
  G_n:\,U_0\rightarrow \scrD[0,1]\,:\,
  \sigma\mapsto \int \psi_n(\sigma'-\sigma)\,F^*(\sigma')\,d\sigma,
\]
and $F_n(\sigma)=(F_0-G_n(\sigma_0))+G_n(\sigma)$, to define
submodels $\scrP_n=\{P_{\sigma,F_n(\sigma)}:\sigma\in U_0\}$.
If the kernel $\psi$ is smooth enough then for every $n\geq1$,
$\scrP_n$ is differentiable; the score is a sum of the score
for scaling in the normal model $\{\Phi_\sigma:\sigma\in\Sigma\}$
and the score along $G_n$. Smoothness is not influenced if we change
the shift constant $(F_0-G_n(\sigma_0))$, so smoothness holds
for $\zeta$-shifted submodels as well (for all $\zeta$ in a
Hellinger neighbourhoods of $0$). Because $\scrP^*$ minimizes
the Kullback-Leibler divergence, the scores $g_{n,0}$ converge
to $\effscore_{\sigma_0,F_0}$ \cf\ (\ref{eq:approxscore}). If
$\scrP^*$ is parametrized such that it is locally Lipschitz
(of any order) around $P_0$, the Hellinger degree of approximation
between $\scrP^*$ and $\scrP_n$ can be controlled uniformly
over $U_0$ and any rate $(\rho_n)$ is achievable in
(\ref{eq:approxhell}) by letting $(\tau_n)$ decrease fast
enough. Compare (\ref{eq:Udomcond}) with condition (2.7) in
van~der~Vaart (1996) \cite{vdVaart96a} and note that a
considerable amount of control over properties of the
functions $g_{n,\tht,\zeta}$ can be exercised through
the choice of smoothing kernel $\psi_n$ (\eg\ of bounded
H\"older norm: see examples~25.35--25.36 in \cite{vdVaart98},
note that the $\sigma$-derivative of $\phi_\sigma(y)$ is
bounded and conclude that the smoothing kernels $\psi_n$ can
be chosen such that the scores $g_{n,F-F_0}$ are bounded).
These arguments make plausible (but do not prove)
that a well-chosen sequence of kernels $(\psi_n)$ smooths
$\scrP^*$ to $\scrP_n$'s that form a sequence of approximately
least-favourable submodels. Since the posterior has already been
shown to be consistent under perturbation, this claim implies
that the ILAN expansion (\ref{eq:ilan}) holds. (The lack of a
rigorous proof is one of two reasons why
conjecture~\ref{conj:nlm} is not a theorem.)
\end{example}
Conditions for integration of the LAE expansion are identical to
those in the LAN case plus a requirement of one-sided contiguity.
(In the LAN case, contiguity is implied by Le Cam's first lemma).
For every $\eta \in D(\rho)=D(\tht_0,\rho)$, the sequence
$(P^n_{\theta_n(h_n),\eta})$ is required to be contiguous with respect
to $(P^n_{\theta_0,\eta})$. Lemma~\ref{lem:DomLemma}
below shows that such one-sided contiguity and domination as in
(\ref{eq:approxU}) are closely related and both hold under
a log-Lipschitz condition. Lemma~\ref{lem:DomLemma} is a simple
sufficiency statement that applies in the support boundary
problem; various more general conditions for assertions {\it (i)} and
{\it (ii)} of lemma~\ref{lem:DomLemma} exist (see lemma~3.2 in Kleijn
and Knapik (2013) \cite{Kleijn13}).
\begin{lemma}
\label{lem:DomLemma}
Suppose that there exists a constant $m$ such that for all $\eta\in H$,
all $x$ and every $\theta$ in a neighbourhood of $\theta_0$,
  \begin{equation}
    \label{eq:LogLipschitz}
    \bigl|\log p_{\theta,\eta}-\log p_{\theta_0,\eta}\bigr|(x)
      \,1_{A_{\theta_0}}(x) \leq m|\theta_0-\theta|.
  \end{equation}
Then, for fixed $\rho>0$ small enough,
\begin{itemize}
  \item[(i)] the model satisfies the domination condition
    \[
      \sup_{\eta \in D(\rho)} P^n_{\theta_0, \eta}
      \bigg(\prod_{i=1}^n \frac{p_{\theta_n(h_n),\eta}}{p_{\theta_0,\eta}}(X_i)
      \bigg)= O(1),
    \]
\item[(ii)] and, for every $\eta \in D(\rho)$, $(P^n_{\theta_n(h_n),\eta})$
  is contiguous \wrt\ $(P^n_{\theta_0,\eta})$.
\end{itemize}
\end{lemma}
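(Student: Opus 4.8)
The plan is to derive both assertions from a single pointwise bound on the product likelihood ratio, obtained by exponentiating the summed log-Lipschitz inequality (\ref{eq:LogLipschitz}). Writing $\theta_n(h_n)=\theta_0+n^{-1}h_n$ for the irregular ($n^{-1}$-)localization and summing (\ref{eq:LogLipschitz}) over the $n$ observations, I would first note that
\[
  \sum_{i=1}^n\bigl|\log p_{\theta_n(h_n),\eta}-\log p_{\theta_0,\eta}\bigr|(X_i)\,1_{A_{\theta_0}}(X_i)
    \leq n\,m\,|\theta_n(h_n)-\theta_0| = m\,|h_n|.
\]
Since $X_i$ lies in $A_{\theta_0}$ almost surely under $P_{\theta_0,\eta}$, and off $A_{\theta_0}$ the numerator vanishes so the corresponding factor is at most one, this yields, $P^n_{\theta_0,\eta}$-almost-surely,
\[
  \prod_{i=1}^n\frac{p_{\theta_n(h_n),\eta}}{p_{\theta_0,\eta}}(X_i)\leq e^{m|h_n|},
\]
with $m$ independent of $\eta$. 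This is the only place the hypothesis is used, and its $\eta$-free constant is precisely what will deliver uniformity over $D(\rho)$.

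For assertion \emph{(i)} I would integrate this bound. For each deterministic $\theta\geq\theta_0$ the likelihood ratio integrates to at most one, since $P_{\theta,\eta}$ has total mass $P_{\theta,\eta}([\theta_0,\infty))\leq1$ on the support of $P_{\theta_0,\eta}$; combined with the exponential bound and the device $\sup_{|h|\leq M}\prod_i (p_{\theta_n(h),\eta}/p_{\theta_0,\eta})(X_i)\leq e^{mM}$ (the constraint $\{X_{(1)}\geq\theta_n(h)\}$ forcing positivity does not enlarge the bound), I obtain $P^n_{\theta_0,\eta}(\prod_i\cdots)\leq e^{mM}$ on $\{|h_n|\leq M\}$. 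As $m$ does not depend on $\eta$, this is uniform over $\eta\in D(\rho)$, giving the $O(1)$ claim. The one genuinely technical point is that $(h_n)$ is only bounded in probability, not almost surely, so I would fix $M$ through $\sup_n P_{\theta_0,\eta}(|h_n|>M)$ small (using that $P_{\theta_0,\eta}$ and $P_0$ are mutually contiguous for $\eta\in D(\rho)$, so that $h_n=O_{P_0}(1)$ transfers) and absorb the contribution of $\{|h_n|>M\}$ by a truncation argument.

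For assertion \emph{(ii)} I would invoke Le~Cam's first lemma \cite{vdVaart98}: the contiguity $(P^n_{\theta_n(h_n),\eta})\triangleleft(P^n_{\theta_0,\eta})$ holds provided that, along every subsequence on which $L_n=dP^n_{\theta_n(h_n),\eta}/dP^n_{\theta_0,\eta}$ converges weakly to a limit $U$ under $P^n_{\theta_0,\eta}$, one has $EU=1$. In the admissible one-sided regime $h_n\geq0$ the supports nest, $P_{\theta_n(h_n),\eta}\ll P_{\theta_0,\eta}$, hence $E_{\theta_0,\eta}[L_n]=1$ exactly; the pointwise bound $L_n\leq e^{m|h_n|}$ makes $(L_n)$ uniformly integrable once $(h_n)$ is truncated as above, so no mass escapes and $EU=\lim E[L_n]=1$. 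Consistently with Definition~\ref{df:LAE}, the LAE limit is $U=e^{h\gamma_{\theta_0,\eta}}1_{\{h\leq\Delta\}}$ with $\Delta\sim\Exp_{0,\gamma_{\theta_0,\eta}}$, for which $EU=e^{h\gamma}P(\Delta\geq h)=1$ when $h\geq0$.

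The main obstacle, and the reason the statement is phrased as \emph{one-sided} contiguity, is that for $h_n<0$ the boundary moves left and $P_{\theta_n(h_n),\eta}$ places positive mass on $[\theta_n(h_n),\theta_0)$, a $P_{\theta_0,\eta}$-null set; mass genuinely escapes ($EU=e^{-\gamma|h|}<1$) and contiguity fails in that direction. Thus the delicate part of the argument is not the estimate itself but verifying that the log-Lipschitz control, together with the truncation handling the merely-in-probability boundedness of $(h_n)$, suffices to rule out escape of mass in exactly the admissible direction.
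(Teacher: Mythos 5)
The paper never actually proves this lemma: it is presented as a ``simple sufficiency statement'' with a pointer to lemma~3.2 of Kleijn and Knapik (2013), so your argument can only be judged against the mechanism the paper clearly intends. That mechanism is the one you found: summing (\ref{eq:LogLipschitz}) over the sample and exponentiating gives the $P^n_{\theta_0,\eta}$-almost-sure bound $\prod_{i=1}^n(p_{\theta_n(h_n),\eta}/p_{\theta_0,\eta})(X_i)\leq e^{m|h_n|}$ with a constant free of $\eta$ (factors with $X_i\notin A_{\theta_0}$ vanish and only help), and both assertions are read off from this single bound --- which is exactly why the paper calls domination and one-sided contiguity ``closely related''. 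Your diagnosis of why contiguity is one-sided (for $h_n<0$ the perturbed law puts mass $\approx 1-e^{-\gamma|h|}$ on the $P_{\theta_0,\eta}$-null set $[\theta_n(h_n),\theta_0)$) is also correct.

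Two repairs are needed. First, your device for handling $(h_n)$ that are only bounded in probability rests on a false claim: for fixed $\eta\in D(\rho)$, $\eta\neq\eta_0$, the product measures $P^n_{\theta_0,\eta}$ and $P_0^n$ are \emph{not} mutually contiguous --- the Hellinger affinity $\bigl(1-\ft12 H^2(P_{\theta_0,\eta},P_0)\bigr)^n$ tends to zero, so the two sequences separate completely and tightness does not transfer. Moreover, no truncation argument can ``absorb'' the contribution of $\{|h_n|>M\}$ to the expectation in \emph{(i)}: take $\eta=\eta_0$ with $\eta_0(x)=\gamma e^{-\gamma x}$ and $h_n=\Delta_n=n(X_{(1)}-\theta_0)$, which is $O_{P_0}(1)$; then $\prod_i(p_{\theta_n(h_n),\eta}/p_{\theta_0,\eta})(X_i)=e^{\gamma\Delta_n}$ and $P^n_{\theta_0,\eta}\,e^{\gamma\Delta_n}=\infty$. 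So assertion \emph{(i)} is simply false for merely tight $(h_n)$; it must be read, as the paper itself does elsewhere (``stochastic and upper-bounded by $M$'' in the proof of lemma~\ref{lem:Udom}, and the indicator $1_{\{\|h_n\|\leq M\}}$ in the proof of theorem~\ref{thm:ilanone}), for $(h_n)$ bounded almost surely, i.e.\ after truncation has been performed once under $P_0$ --- and in that reading the bound $e^{mM}$ holds under \emph{any} measure, so no transfer of tightness is needed at all. Second, for \emph{(ii)} your route through Le~Cam's first lemma is heavier than necessary and quietly imports the LAE property (to identify the limit $U$), which is not among the hypotheses of the lemma; also $E_{\theta_0,\eta}L_n=1$ is exact only for \emph{deterministic} $h_n\geq0$, not for data-dependent ones. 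The same pointwise bound gives the conclusion directly: for $h_n\geq0$ the supports nest, so for any events $B_n$,
\[
  P^n_{\theta_n(h_n),\eta}(B_n)
  = P^n_{\theta_0,\eta}\Bigl(\,\prod_{i=1}^n
    \frac{p_{\theta_n(h_n),\eta}}{p_{\theta_0,\eta}}(X_i)\,1_{B_n}\Bigr)
  \leq e^{mM}\,P^n_{\theta_0,\eta}(B_n),
\]
which tends to zero whenever $P^n_{\theta_0,\eta}(B_n)\rightarrow0$. This needs no weak convergence, no uniform integrability and no subsequence extraction, and it is the form of contiguity actually used in the integral-LAE argument.
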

We apply this lemma in the support boundary problem below.
\begin{example}{\it (Support boundary estimation, cont.)}\\
Since the space $H$ consists of functions of bounded variation, Theorem~V.2.2 in
Ibragimov and Has'minskii (1981) \cite{Ibragimov81}
confirms that the model exhibits local asymptotic exponentiality
in the $\theta$-direction for every fixed $\eta$. In the notation of
Definition~\ref{df:LAE}, $\gamma_{\tht_0,\eta} = \eta(0)$, \ie\
the size of the discontinuity at zero. According to Lemma~4.1 in
\cite{Kleijn13}, the map (\ref{eq:Esscher}) is uniform-to-Hellinger
continuous and the space $H$ is a collection of probability densities
that are {\it (i)} monotone decreasing with sub-expo\-nential
tails, {\it (ii)} continuously differentiable on $[0,\infty)$
and {\it (iii)} log-Lipschitz with constant $\alpha+S$. Hence
(\ref{eq:LogLipschitz}) is satisfied with $m=\alpha+S$.
We conclude that both the domination condition (\ref{eq:approxU})
(and the contiguity condition) are satisfied. Consistency under
perturbation has been established in the previous section. According
to Theorem~3.2 in \cite{Kleijn13} (the LAE analog of
Theorem~\ref{thm:ilanone} above) the integral LAE-expansion holds, \ie,
\[
\begin{split}
  \int_H \prod_{i=1}^n &\frac{p_{\theta_n(h_n),\eta}}{p_0}(X_i)\,d\Pi_H(\eta)\\
   &= \int_H \prod_{i=1}^n \frac{p_{\theta_0,\eta}}{p_0}(X_i)\,d\Pi_H(\eta)\,
     \exp(h_n\eta_0(0)+o_{P_0}(1)) 1_{\{h_n \leq \Delta_n\}},
\end{split}
\]
for all stochastic $(h_n)$ that are bounded in $P_0$-probability.
\end{example}


\section{Posterior asymptotic normality and exponentiality}
\label{sec:pan} 

In this section, it is shown that ILAN expansions of the form
(\ref{eq:ilan}) induce asymptotic normality of marginal
posteriors, \cf\ (\ref{eq:assertBvM}), analogous to the 
way local asymptotic normality of parametric likelihoods 
induces the parametric Bernstein-Von~Mises theorem. The underlying
condition is marginal posterior consistency at rate $n^{-1/2}$
(which is also necessary for (\ref{eq:assertBvM})). As it turns
out, of all the conditions for the semiparametric Bernstein-von~Mises
limit, marginal consistency is the most stringent and hard to
analyse in examples. The background of this issue is the possible
occurrence of \emph{semiparametric bias} (for an intriguing equivalence,
see Klaassen (1987) \cite{Klaassen87} and relate to
(\ref{eq:semiaslin})). 

\subsection{Local limit shapes of marginal posteriors}
\label{sub:lls}

The third major step in the proof of the semiparametric
Bernstein-von~Mises theorem is based on two observations: firstly,
in a semiparametric problem the integrals $S_n$ appear in the
expression for the marginal posterior in exactly the same way as
parametric likelihood ratios appear in the posterior for 
the parametric problem of theorem~\ref{thm:truebvm}. Secondly, the
parametric Bernstein-Von~Mises proof depends on likelihood ratios
{\em only} through the LAN property. As a consequence, local 
asymptotic normality for $S_n$ offers the possibility to apply Le~Cam
and Yang's proof of posterior asymptotic normality in semiparametric 
context. We impose contraction at parametric rate for the marginal
posterior to apply the LAN expansion of $S_n$ and reach the 
conclusion that the marginal posterior satisfies the
Bern\-stein-\-Von~Mises assertion  (\ref{eq:assertBvM}) (see
theorem~\ref{thm:pan}).

This shortcut is illustrated further by the following perspective.
For given $\tht$ and $n$, $s_n(n^{1/2}(\tht-\tht_0))$ is a
probability density for the stochastic vector $(X_1,\ldots,X_n)$
with respect to $P_0^n$, corresponding to the $\tht$-conditioned
($\Pi_H$-prior predictive) distribution,
\[
  \tilde{P}_{n,\tht}(B) = P_0^n\bigl(1_B\,
    s_n\bigl(\sqrt{n}(\tht-\tht_0)\bigr)\bigr),
\]
(where $B$ is measurable in the $n$-fold product of the
samplespace). Indeed, keeping $n$ fixed, we may view the map
$\tht\mapsto\tilde{P}_{n,\tht}$ as a parametric model with a
prior $\Pi_\Tht$ that is thick at $\tht_0$. Conditions 
then amount to stochastic local asymptotic normality and
parametric posterior rate-optimality.
This conceptual simplification
comes at a price, though: firstly, this parametric model is
misspecified, \ie\ there is no $\tht\in\Tht$ such that
$P^n_0=\tilde{P}_{n,\tht}$. Secondly, although we have assumed
that the sample is distributed \iid, in the parametric model
above $X_1,\ldots, X_n$ are {\em not} independent, instead the
sample $(X_1,\ldots, X_n)$ satisfies the weaker property of
exchangeability under $\tilde{P}_{n,\tht}$ for every $\tht$,
in accordance with De~Finetti's theorem. Although this enables
application of methods put forth in Kleijn and van~der~Vaart
\cite{Kleijn12}, in the present context, results are sharper
if we take into account the semiparametric background of the
quantities $s_n(h)$.
\begin{theorem}
\label{thm:pan} {\rm (Posterior asymptotic normality)}\\
Let $\Tht$ be open in $\RR^k$ with a prior $\Pi_\Tht$ that is 
thick at $\tht_0$. Suppose that for large enough $n$, the map 
$h\mapsto s_n(h)$ is continuous $P_0^n$-almost-surely. Assume 
that there exists an $L_2(P_0)$-function $\effscore_{\tht_0,\eta_0}$ 
such that for every $(h_n)$ bounded in probability, 
(\ref{eq:ilan}) holds, $P_0\effscore_{\tht_0,\eta_0}=0$ and 
$\effFI_{\tht_0,\eta_0}$ is non-singular. Furthermore suppose that 
for every $(M_n)$, $M_n\rightarrow\infty$,
\begin{equation}
  \label{eq:sqrtn}
  \Pi_n\bigl(\,\|h\|\leq M_n\bigm|X_1,\ldots,X_n\,\bigr)\convprob{P_0} 1.
\end{equation}
Then the sequence of marginal posteriors for $\tht$ converges
to a normal distribution in total variation,
\[
  \sup_{A}\Bigl|\,
    \Pi_n\bigl(\,h\in A\bigm|X_1,\ldots,X_n\,\bigr)
    - N_{\effDelta_n,\effFI_{\tht_0,\eta_0}^{-1}}(A)
  \,\Bigr| \convprob{P_0} 0,
\]
centred on $\effDelta_n$ with covariance matrix $\effFI_{\tht_0,\eta_0}^{-1}$.
\end{theorem}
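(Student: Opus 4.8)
The plan is to mimic Le~Cam and Yang's proof of the parametric Bernstein-Von~Mises theorem (theorem~\ref{thm:truebvm}; see \cite{LeCam90,Kleijn12}) with the integrated likelihood ratio $s_n$ playing the role of the parametric likelihood ratio. First I would pass to the localized parameter $h=\sqrt{n}(\tht-\tht_0)$. Since $\Pi_\Tht$ is thick at $\tht_0$, it has a density $\pi$ that is continuous and strictly positive at $\tht_0$, and the marginal posterior for $h$ can be written as $\int_A s_n(h)\pi_n(h)\,dh\big/\int_{\RR^k} s_n(h)\pi_n(h)\,dh$, with $\pi_n(h)=\pi(\tht_0+n^{-1/2}h)$ (the Jacobian $n^{-k/2}$ cancels). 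After completing the square, the target law $N_{\effDelta_n,\effFI_{\tht_0,\eta_0}^{-1}}$ has a density proportional in $h$ to $\exp\bigl(h^T\GG_n\effscore_{\tht_0,\eta_0}-\ft12 h^T\effFI_{\tht_0,\eta_0}h\bigr)$, with centre $\effDelta_n=\effFI_{\tht_0,\eta_0}^{-1}\GG_n\effscore_{\tht_0,\eta_0}$ as in (\ref{eq:DefDeltaMS}); because $P_0\effscore_{\tht_0,\eta_0}=0$, the central limit theorem gives $\GG_n\effscore_{\tht_0,\eta_0}\convweak{\tht_0,\eta_0}N(0,\effFI_{\tht_0,\eta_0})$ and hence $\effDelta_n=O_{P_0}(1)$. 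The ILAN expansion (\ref{eq:ilan}) then reads $s_n(h)/s_n(0)=\exp\bigl(h^T\GG_n\effscore_{\tht_0,\eta_0}-\ft12 h^T\effFI_{\tht_0,\eta_0}h+R_n(h)\bigr)$, so the unnormalized posterior density $s_n(h)\pi_n(h)$ equals $s_n(0)$ times the normal kernel times the perturbation $\pi_n(h)\,e^{R_n(h)}$. The whole task reduces to showing that this perturbation is asymptotically constant in $h$.

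The next step is to upgrade the sequential ILAN statement to local uniformity. By hypothesis $h\mapsto s_n(h)$ is $P_0^n$-almost-surely continuous, so on any fixed ball $\{\|h\|\leq M\}$ the remainder $R_n(h)=\log(s_n(h)/s_n(0))-h^T\GG_n\effscore_{\tht_0,\eta_0}+\ft12 h^T\effFI_{\tht_0,\eta_0}h$ is continuous and attains its supremum at a (measurably chosen) random point $h_n^\ast$ with $\|h_n^\ast\|\leq M$. Since $(h_n^\ast)$ is bounded in probability, (\ref{eq:ilan}) applies to it and yields $R_n(h_n^\ast)=o_{P_0}(1)$; applying the same argument to $-R_n$ gives $\sup_{\|h\|\leq M}|R_n(h)|=o_{P_0}(1)$ for every fixed $M$. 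Together with $\sup_{\|h\|\leq M}|\pi_n(h)-\pi(\tht_0)|\to 0$ (continuity of the prior density), this shows that on each fixed ball the perturbation $\pi_n(h)\,e^{R_n(h)}$ converges uniformly to the constant $\pi(\tht_0)>0$.

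I would then compare the two densities on a fixed ball and afterwards let $M\to\infty$. Restricting and renormalizing both the posterior and the Gaussian to $\{\|h\|\leq M\}$, the uniform control above shows that the renormalized posterior density differs from the renormalized normal density by a factor tending uniformly to $1$, whence their total-variation distance over the ball tends to zero in $P_0$-probability. To remove the ball one controls the tails: the Gaussian tail $N_{\effDelta_n,\effFI_{\tht_0,\eta_0}^{-1}}(\|h\|>M)$ is small uniformly in $n$ because $\effDelta_n=O_{P_0}(1)$ and the covariance is fixed, while the posterior tail $\Pi_n(\|h\|>M\mid X_1,\ldots,X_n)$ is controlled by the parametric-rate contraction hypothesis (\ref{eq:sqrtn}). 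A standard $M\to\infty$ (diagonal) argument then promotes the fixed-$M$ conclusion to a statement over all of $\RR^k$, yielding $\sup_A|\Pi_n(h\in A\mid X_1,\ldots,X_n)-N_{\effDelta_n,\effFI_{\tht_0,\eta_0}^{-1}}(A)|\convprob{P_0}0$, as claimed.

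The main obstacle is the mismatch between the sequential (pointwise-in-sequences) form of the ILAN expansion and the uniform control the Bernstein-Von~Mises comparison needs over sets that must ultimately grow to all of $\RR^k$. The almost-sure continuity of $s_n$ is precisely what converts ``$R_n(h_n)=o_{P_0}(1)$ for every bounded sequence'' into uniform smallness on each fixed compactum via worst-point selection, but this works only for a fixed radius; the delicate part is to let the radius grow while keeping both tails negligible, and here hypothesis (\ref{eq:sqrtn}) is decisive, since without marginal contraction at the parametric rate the posterior mass could escape the region where the ILAN approximation is available. One must also verify that the normalizing integral $\int s_n\pi_n$ is not distorted by restriction to the ball, which again follows from (\ref{eq:sqrtn}) together with the uniform approximation on compacta.
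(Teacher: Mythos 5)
Your proposal is correct and follows essentially the same route as the paper, whose proof of theorem~\ref{thm:pan} consists of a reference to theorem~2.1 of Kleijn and van~der~Vaart \cite{Kleijn12}: localize, compare the posterior with the normal on compact balls using the ILAN expansion (made uniform on compacta by measurable worst-point selection, which is precisely what the stochastic/integral form of LAN licenses), then remove the balls via the contraction hypothesis (\ref{eq:sqrtn}), tightness of $\effDelta_n$, and a diagonal argument. The only cosmetic difference is that the cited proof carries out the compact-set comparison through a pairwise positive-part integral of density ratios between the two conditioned measures, rather than through your direct uniform bound on the remainder $R_n$ followed by a density-ratio comparison; the underlying mechanism (measurable selection of extremal points in the ball to invoke the sequential ILAN statement) is identical.
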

\begin{proof}
The proof is identical to that of theorem~2.1 in \cite{Kleijn12}
upon replacement of parametric likelihoods with integrated likelihoods.
\end{proof}
In the irregular LAE case a completely analogous statement can be
made, leading to the assertion that the posterior is asymptotically
exponential: 
\begin{equation}
  \label{eq:laebvm}
  \sup_A \Bigl|\,
    \Pi_n\bigl(\,h\in A\bigm|X_1,\ldots,X_n\,\bigr)
    - \NExp_{\Delta_n, \gamma_{\tht_0,\eta_0}}(A)
  \,\Bigr|\convprob{P_0} 0,
\end{equation}
with local parameter $h$ such that $\tht_n(h)=\tht_0+n^{-1}h$. Here,
the rate condition on the marginal posterior must enable the LAE
expansion, \ie\ it must imply one-sided, rate-$n^{-1}$ consistency.

\subsection{Marginal posterior consistency and semiparametric bias}
\label{sub:marginal}

Condition (\ref{eq:sqrtn}) in theorem~\ref{thm:pan} requires that the
posterior measures of a sequence of model subsets of the form,
\begin{equation}
  \label{eq:strip}
  \Tht_n\times H = \bigl\{ (\tht,\eta)\in\Tht\times H\,
    :\,\sqrt{n}\|\tht-\tht_0\|\leq M_n\bigr\},
\end{equation}
converge to one in $P_0$-probability, for every sequence
$(M_n)$ such that $M_n\rightarrow\infty$. Essentially, this
condition enables us to restrict the proof of theorem~\ref{thm:pan}
to the shrinking domain in which (\ref{eq:ilan}) applies. 
Marginal posteriors in nonparametric models have not received
much specific attention in the literature on posterior asymptotics
thus far. Questions concerning testing in the presence of nuisance 
parameters (see \cite{Choi96} and many others) lie at the centre
of this problem.

To fix a perspective to frame the question, consider the
following lemma, which is a variation on lemma~6.1 of
Bickel and Kleijn (2012) \cite{Bickel12} and appears easier
to satisfy in models that are everywhere smooth (see also
condition~(B3) of theorem~8.2 in Lehmann and Casella (1998)
\cite{Lehmann98}).
\begin{lemma}
\label{lem:lehmann} {\rm (Marginal parametric rate (I))}\\
Given some $P_0$, assume that the model possesses globally defined
approximately least-favourable submodels
$\Tht\mapsto\scrP:\tht\mapsto Q_{n,\tht,\zeta}$ for all $\zeta$.
Let the sequence of maps $\tht\mapsto S_n(\tht)$ be $P_0$-almost-surely
continuous and such that (\ref{eq:ilan}) is satisfied. Furthermore,
assume that there exists a constant $C>0$ such that for any
$(M_n)$, $M_n\rightarrow\infty$,
\begin{equation}
  \label{eq:lehmann}
  P_0^n\biggl(\, \sup_{\zeta\in H}\,\sup_{\tht\in\Tht^c_n}
    \PP_n\log\frac{q_{n,\tht,\zeta}}{q_{n,\tht_0,\zeta}}
    \leq-\frac{C\,M_n^2}{n}\,\biggr)
    \rightarrow1.
\end{equation}
Then, for any nuisance prior $\Pi_H$ and parametric prior 
$\Pi_{\Tht}$, thick at $\tht_0$,
\begin{equation}
  \label{eq:rocULR}
  \Pi\bigl(\,n^{1/2}\|\tht-\tht_0\|>M_n\,\bigm|\,X_1,\ldots,X_n\,\bigr)
  \convprob{P_0} 0,
\end{equation}
for any $(M_n)$, $M_n\rightarrow\infty$.
\end{lemma}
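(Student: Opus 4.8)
The plan is to write the marginal posterior of $\tht$ as the ratio of integrated likelihoods against the parametric prior and to show that the mass outside a $\sqrt n$-neighbourhood of $\tht_0$ is negligible. Dividing numerator and denominator of (\ref{eq:posterior}) with $A=\Tht_n^c\times H$ by $\prod_i p_{\tht_0,\eta_0}(X_i)$ and using the product structure of the prior, the left-hand side of (\ref{eq:rocULR}) equals
\[
  \frac{\int_{\Tht_n^c} S_n(\tht)\,d\Pi_\Tht(\tht)}
       {\int_{\Tht} S_n(\tht)\,d\Pi_\Tht(\tht)},
  \qquad
  \Tht_n^c=\{\tht:\sqrt n\|\tht-\tht_0\|>M_n\},
\]
with $S_n$ as in (\ref{eq:defSn}). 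It therefore suffices to combine a lower bound on the denominator with an upper bound on the numerator whose quotient vanishes in $P_0$-probability. Throughout I pass to the local coordinate $h=\sqrt n(\tht-\tht_0)$, so that $S_n(\tht)=s_n(h)$ and $d\Pi_\Tht(\tht)=n^{-k/2}\pi_\Tht(\tht_0+n^{-1/2}h)\,dh$, where $\pi_\Tht$ is the continuous, strictly positive prior density supplied by thickness.

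For the denominator I would restrict the integral to a fixed ball $\{\|h\|\le K\}$ and invoke the ILAN expansion (\ref{eq:ilan}). A measurable-selection argument (using the assumed $P_0^n$-a.s.\ continuity of $h\mapsto s_n(h)$) upgrades (\ref{eq:ilan}) from convergence along $O_{P_0}(1)$-sequences to the uniform statement $\sup_{\|h\|\le K}\bigl|\log(s_n(h)/s_n(0))-h^TG_n+\tfrac12 h^T\effFI_{\tht_0,\eta_0}h\bigr|=o_{P_0}(1)$, with $G_n=n^{-1/2}\sum_i\effscore_{\tht_0,\eta_0}(X_i)=O_{P_0}(1)$. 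Together with $\pi_\Tht(\tht_0+n^{-1/2}h)\ge\tfrac12\pi_\Tht(\tht_0)>0$ for large $n$, this yields
\[
  \int_{\Tht} S_n\,d\Pi_\Tht \;\ge\; n^{-k/2}\,s_n(0)\,c_n,
\]
where $c_n$ (a multiple of the random integral $\int_{\|h\|\le K}\exp(h^TG_n-\tfrac12h^T\effFI_{\tht_0,\eta_0}h)\,dh$) is bounded away from zero in $P_0$-probability.

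For the numerator I would reparametrize the nuisance through the globally defined approximately least-favourable submodels, writing $\eta=\eta_n(\tht)+\zeta$, so that $p_{\tht,\eta}=q_{n,\tht,\zeta}$ and, since $\eta_n(\tht_0)=\eta_0$,
\[
  S_n(\tht)=\int \prod_{i=1}^n\frac{q_{n,\tht,\zeta}}{q_{n,\tht_0,\zeta}}(X_i)\,
            \prod_{i=1}^n\frac{q_{n,\tht_0,\zeta}}{p_{\tht_0,\eta_0}}(X_i)\,d\Pi_H(\eta).
\]
Here condition (\ref{eq:lehmann}), which is assumed for \emph{every} $M_n\to\infty$, is the key input: decomposing $\{\|h\|>M_n\}$ into dyadic shells and applying (\ref{eq:lehmann}) at the sequence $2^jM_n$ on the $j$-th shell, while exploiting the monotonicity in $M$ of $\sup_{\zeta}\sup_{\sqrt n\|\tht-\tht_0\|>M}\PP_n\log(q_{n,\tht,\zeta}/q_{n,\tht_0,\zeta})$, yields the uniform quadratic bound $\prod_i(q_{n,\tht,\zeta}/q_{n,\tht_0,\zeta})(X_i)\le e^{-c\,n\|\tht-\tht_0\|^2}$ on $\Tht_n^c$, with probability tending to one and uniformly in $\zeta$. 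Consequently $S_n(\tht)\le e^{-c\,n\|\tht-\tht_0\|^2}\,T_n(\tht)$, with $T_n(\tht)=\int\prod_i(q_{n,\tht_0,\zeta}/p_{\tht_0,\eta_0})(X_i)\,d\Pi_H(\eta)$ a shifted nuisance likelihood anchored at $\tht_0$. Since the quadratic factor concentrates the surviving $\tht$-mass at $\|\tht-\tht_0\|\asymp M_n/\sqrt n\to0$, the induced shift $\eta_n(\tht)-\eta_0$ is of order $\|\tht-\tht_0\|$, and prior stability (Lemma~\ref{lem:translate}) together with the nuisance posterior's concentration gives $T_n(\tht)\le C\,s_n(0)$ there (the region $\|\tht-\tht_0\|>\delta$ being crushed by the exponential). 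Integrating,
\[
  \int_{\Tht_n^c} S_n\,d\Pi_\Tht
   \;\le\; C\,s_n(0)\,n^{-k/2}\!\int_{\|h\|>M_n}\!e^{-c\|h\|^2}\,dh
   \;=\; n^{-k/2}\,s_n(0)\,o_{P_0}(1),
\]
the last integral being a Gaussian tail that vanishes for any $M_n\to\infty$.

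Dividing the two displays, the factors $n^{-k/2}$ and $s_n(0)$ cancel and the quotient is $o_{P_0}(1)/c_n=o_{P_0}(1)$, which is (\ref{eq:rocULR}). The main obstacle is exactly the passage from (\ref{eq:lehmann}) to the uniform quadratic-in-$\|\tht-\tht_0\|$ bound over the entire unbounded region $\{\|h\|>M_n\}$: this moderate-deviation zone lies beyond the $O_{P_0}(1)$-reach of the ILAN expansion, yet it is precisely the quadratic decay (rather than a flat bound $e^{-CM_n^2}$, which would leave a fatal factor $n^{k/2}/s_n(0)$) that forces the numerator to concentrate as sharply as the denominator and produces the simultaneous cancellation of $n^{-k/2}$ and of $s_n(0)$. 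The second delicate point, where the \emph{global} approximate least-favourability and prior stability are indispensable, is the domination $T_n(\tht)\le C\,s_n(0)$ near $\tht_0$: it controls the $\tht$-driven translation of the nuisance prior and is the technical embodiment of the absence of semiparametric bias flagged before Lemma~\ref{lem:lehmann}.
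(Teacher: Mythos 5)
Your denominator step is sound (the upgrade of (\ref{eq:ilan}) to a uniform statement over compact $h$-balls via measurable selection and a.s.\ continuity is standard), but your treatment of the numerator contains two genuine gaps. The first and most serious is the dyadic upgrade of (\ref{eq:lehmann}) to the quadratic bound $\prod_{i}(q_{n,\tht,\zeta}/q_{n,\tht_0,\zeta})(X_i)\leq e^{-c\,n\|\tht-\tht_0\|^2}$ on all of $\Tht_n^c$. Hypothesis (\ref{eq:lehmann}) is a separate limit statement for each sequence: applying it to $(2^jM_n)_n$ gives, for each fixed $j$, an event $A_{n,j}$ with $P_0^n(A_{n,j})\rightarrow1$, but with no rate and no uniformity in $j$. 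Your shell bound needs $A_{n,0},A_{n,1},A_{n,2},\ldots$ to hold simultaneously — and since $\{\|h\|>M_n\}$ is unbounded, infinitely many of them are needed; even cutting off at a fixed $\|\tht-\tht_0\|=\delta$ leaves $J_n\asymp\log n$ shells — and an intersection of a growing number of events, each of unquantified probability tending to one, need not have probability tending to one. A diagonal argument does not close this, because the admissible growth of $J_n$ is dictated by the unknown rates hidden in (\ref{eq:lehmann}). The second gap is the domination $T_n(\tht)\leq C\,s_n(0)$: lemma~\ref{lem:translate} is proved only for bounded stochastic perturbations $h_n=O_{P_0}(1)$ (it rests on (\ref{eq:approxhell}), which is a statement about $\sup_{\|h\|\leq M}$), whereas you need to control the $\eta_n(\tht)$-translated nuisance integral uniformly over $\tht\in\Tht_n^c$, where $\|h\|$ ranges up to $\delta\sqrt{n}$ and beyond; neither prior stability nor nuisance posterior consistency supplies a multiplicative bound by $s_n(0)$ on that range.

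The paper's own proof is shorter and structurally different: it applies (\ref{eq:lehmann}) exactly once, to the given $(M_n)$, and uses the resulting flat bound $e^{-CM_n^2}$ on the whole of $\Tht_n^c$, uniformly in $\zeta$. The factor you call fatal never appears, because the paper does not pair this with your Laplace-scale denominator bound $\asymp n^{-k/2}s_n(0)$; instead it introduces the event $B_n=\bigl\{\int_\Tht S_n(\tht)\,d\Pi_\Tht(\tht)\geq e^{-\frac12CM_n^2}S_n(\tht_0)\bigr\}$ and invokes (\ref{eq:ilan}) together with lemma~6.3 of \cite{Bickel12} to get $P_0^n(B_n^c)=o(1)$, so that on $A_n\cap B_n$ the posterior mass of $\Tht_n^c$ is bounded by $e^{\frac12CM_n^2}\cdot e^{-CM_n^2}=e^{-\frac12CM_n^2}\rightarrow0$. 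The point you missed is that the denominator must be compared to $S_n(\tht_0)$ at the same exponential scale $e^{-\mathrm{const}\cdot M_n^2}$ as the numerator bound; that calibration is exactly the content of the cited lemma, and it is there — not in a quadratic-decay estimate for the numerator — that the tension you correctly sensed between $n^{-k/2}$ and $e^{-CM_n^2}$ is resolved. In short, your Gaussian-tail route attempts to repair a problem created by your own choice of denominator bound, and the repair cannot be carried out from (\ref{eq:lehmann}) as stated.
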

\begin{proof}
Let $(M_n)$, $M_n\rightarrow\infty$ be given. Define $(A_n)$ to be the
events in (\ref{eq:lehmann}) so that $P_0^n(A_n^c)=o(1)$ by assumption. 
In addition, let,
\[
  B_n = \biggl\{\int_\Tht S_n(\tht)\,d\Pi_\Tht(\tht)
    \geq e^{-\ft12\,C\,M_n^2}\,S_n(\tht_0) \biggr\}.
\]
By (\ref{eq:ilan}) and lemma~6.3 in \cite{Bickel12},
$P_0^n(B_n^c)=o(1)$ as well. Then,
\[
  \begin{split}
  P_0^n&\Pi(\tht\in\Tht^c_n|X_1,\ldots,X_n)
    \leq P_0^n\Pi(\tht\in\Tht^c_n|X_1,\ldots,X_n)
      \,1_{A_n\cap B_n} + o(1)\\
    &\leq e^{\ft12\,C\,M_n^2}\,P_0^n\biggl( S_n(\tht_0)^{-1}\\
      &\quad\times\int_H\int_{\Tht^c_n}
      \prod_{i=1}^n\frac{q_{n,\tht,\zeta}}{q_{n.\tht_0,\zeta}}(X_i)\,
      \prod_{i=1}^n\frac{q_{n,\tht_0,\zeta}}{q_{n,\tht_0,\zeta_0}}(X_i)\,
      d\Pi_\Tht\,d\Pi_H\,1_{A_n} \biggr) +o(1) = o(1),
  \end{split}
\]
which proves (\ref{eq:rocULR}).
\end{proof}
Essentially the proof of lemma~\ref{lem:lehmann} revolves around
suppressing the diverging inverse prior masses of $\Tht_n\times H$
by the uniform bound on likelihood ratios implied by (\ref{eq:lehmann}).
The condition says that the \emph{semiparametric likelihood ratio
statistic} associated with the marginal estimation of $\tht$ must
have testing power. Requirements of this type also play a prominent
role in frequentist semiparametric theory, typically to assure that
the centred and rescaled limit-distribution of the estimator is tight.

For example, in general formulations of \emph{profile likelihood}
methods (Severini and Wong (1992) \cite{Severini92}, Murphy and
van~der~Vaart (2000) \cite{Murphy00}), conditions are formulated to
exclude the possibility that $\tht$-estimators (which arises
as ML estimators in models for $\tht|\eta$ in which $\eta$ has
been replaced by a likelihood-maximizer $\hat\eta_n$, compare
with (\ref{eq:lehmann}) above)) develops what is known as 
\emph{semiparametric bias}: local variations of the nuisance parameter
distort the model for $\tht|\eta$ to such an extent, that the
``plug-in'' $\eta=\hat\eta_n$ does not give rise to a
\emph{tight limit law} for $n^{1/2}(\hat\tht_n-\tht_0)|\hat{\eta}_n$.
To exclude this possibility, Murphy and van~der~Vaart (2000)
\cite{Murphy00} (see also \cite{vdVaart98}) impose so-called
\emph{no-bias conditions} of the the following form: for the
maximizers $\hat\eta_n,\hat\tht_n$ and all $\tht\in\Tht$, $\eta\in H$,
\begin{equation}
  \label{eq:nobias}
  \begin{split}
    P_{\hat\tht_n,\eta}\effscore_{\hat\tht_n,\hat{\eta}_n}
      &= o\bigl(n^{-1/2}+\|\hat{\tht}_n-\tht_0\|\bigr),\\[2mm]
    P_{\tht,\eta}\|\effscore_{\hat\tht_n,\hat\eta_n}-\effscore_{\tht,\eta}\|^2
      &=o_P(1),
    \quad P_{\hat\tht_n,\eta}\|\effscore_{\hat\tht_n,\hat\eta_n}\|^2=O_P(1).
  \end{split}
\end{equation}
The requirements that are second order in scores control the local
behaviour of Fisher information coefficients and play a (dominating)
role comparable to that of (\ref{eq:approxU}). The essential condition
is the first one, linear in the efficient score: if, when varying
the nuisance $\eta$, the expectation of the efficient score cannot
be controlled to be (roughly) $o(n^{-1/2})$ uniformly, profile
ML estimators $\hat{\tht}_n$ tend to drift off with a bias of order
$O(n^{-1/2})$ or worse: according to theorem~25.59 in \cite{vdVaart98},
under certain general conditions, solutions $\hat\tht_n$ to
efficient score equations satisfy,
\begin{equation}
  \label{eq:biasedaslin}
  n^{1/2}(\hat\tht_n-\tht_0) = \tilde\Delta_n
    + P_{\hat\tht_n,\eta}\effscore_{\hat\tht_n,\hat{\eta}_n} + o_{P_0}(1),
\end{equation}
(compare with (\ref{eq:semiaslin})). Conditions
(\ref{eq:nobias}) also determine bias in the score tests associated
with the (likelihood ratio) tests of (\ref{eq:lehmann}).
In quite some generality \cite{Murphy00,vdVaart98,Bickel98}
one can say that a (sufficient) no-bias condition is that the
``plug-in'' $\hat\eta_n$ is consistent and that the family $\scrF_0$
of score functions $\effscore_{\tht_0,\eta}$ (with $\eta$ in
neighbourhoods of $\eta_0$) forms a $P_0$-Donsker class.

Of course, lemma~\ref{lem:lehmann} formulates mere sufficient conditions,
so one could suspect that bias issues occur as a result of our chosen
(ML) methods rather than being intrinsic to the problem. However, the
following straightforward lemma shows that inconvenient uniformities
are also part of a strictly Bayesian approach.
\begin{lemma}
  \label{lem:rocBayes} {\rm (Marginal parametric rate (II))}\\
  Let $\Pi_\Tht$ and $\Pi_H$ be given. Assume that there exists
  a sequence $(H_n)$ of subsets of $H$, such that the following
  two conditions hold:
  \begin{itemize}
  \item[(i)] The nuisance posterior concentrates on $H_n$ asymptotically,
  \begin{equation}
    \label{eq:etacons}
    \Pi\bigl(\,\eta\in H\setminus H_n\bigm|X_1,\ldots,X_n\bigr)\convprob{P_0}0.
  \end{equation}
  \item[(ii)] For every $(M_n)$, $M_n\rightarrow\infty$,
    \begin{equation}
      \label{eq:ptwise}
       \sup_{\eta\in H_n}\Pi\bigl(\,n^{1/2}\|\tht-\tht_0\|> M_n
        \bigm|\eta,X_1,\ldots,X_n\,\bigr) \convprob{P_0}0.
    \end{equation}
  \end{itemize}
  Then the marginal posterior for $\tht$ concentrates
  at parametric rate, \ie,
  \[
    \Pi\bigl(\,n^{1/2}\|\tht-\tht_0\|> M_n\bigm|X_1,\ldots,X_n\bigr)
    \convprob{P_0}0,
  \]
  for every sequence $(M_n)$, $M_n\rightarrow\infty$,
\end{lemma}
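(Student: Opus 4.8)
The plan is to exploit the disintegration of the joint posterior on $\Tht\times H$ into the nuisance posterior and the $\tht$-conditional posteriors that already appear in hypothesis~(ii). Fix an arbitrary sequence $(M_n)$ with $M_n\rightarrow\infty$ and abbreviate the (sample-dependent) parametric event $B_n=\{\tht\in\Tht:n^{1/2}\|\tht-\tht_0\|>M_n\}$. First I would write the marginal posterior mass of $B_n$ as the $\Pi(\cdot\,|\,X_1,\ldots,X_n)$-average of the conditional posteriors,
\[
  \Pi\bigl(\tht\in B_n\bigm|X_1,\ldots,X_n\bigr)
    =\int_H \Pi\bigl(\tht\in B_n\bigm|\eta,X_1,\ldots,X_n\bigr)\,
      d\Pi\bigl(\eta\bigm|X_1,\ldots,X_n\bigr),
\]
which is legitimate because the joint posterior admits a regular conditional probability given $\eta$ (the required measurability of $(\tht,\eta)\mapsto P_{\tht,\eta}$ being assumed throughout; see \emph{Notation and conventions}).

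Next I would split the domain of integration at the sieve $H_n$. On the complement $H\setminus H_n$ the integrand is bounded by one, so that contribution is at most $\Pi(\eta\in H\setminus H_n\,|\,X_1,\ldots,X_n)$. On $H_n$ I would bound the integrand by its supremum over $\eta\in H_n$ and bound the remaining nuisance mass $\Pi(\eta\in H_n\,|\,X_1,\ldots,X_n)$ by one. Together these give the pathwise inequality
\[
  \Pi\bigl(\tht\in B_n\bigm|X_1,\ldots,X_n\bigr)
    \leq \sup_{\eta\in H_n}\Pi\bigl(\tht\in B_n\bigm|\eta,X_1,\ldots,X_n\bigr)
      + \Pi\bigl(\eta\in H\setminus H_n\bigm|X_1,\ldots,X_n\bigr).
\]

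It then remains to observe that the right-hand side is a sum of two nonnegative sequences, each converging to zero in $P_0$-probability: the first by hypothesis~(ii), which is exactly the assertion that the displayed supremum vanishes in probability for every $M_n\rightarrow\infty$, and the second by hypothesis~(i). Since the sum of two sequences that tend to zero in probability again tends to zero in probability, the left-hand side does too; as $(M_n)$ was an arbitrary sequence diverging to infinity, this is the claim.

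The argument is short, and the delicate points are measure-theoretic rather than analytic. The main thing to secure is the disintegration in the first display — a version of the $\eta$-conditional posterior for which the Fubini-type factorization holds — together with the joint measurability of $(\eta,X_1,\ldots,X_n)\mapsto\Pi(\tht\in B_n\,|\,\eta,X_1,\ldots,X_n)$, which is what lets the supremum over $H_n$ in hypothesis~(ii) define a bona fide random variable dominating the integral over $H_n$. Both are covered by the standing measurability assumptions on model and prior, and no further probabilistic input is needed; that is precisely the content of the lemma, reducing parametric-rate marginal consistency to the existence of a sieve $(H_n)$ that carries the nuisance posterior and on which the $\tht$-conditional posteriors are uniformly rate-$n^{-1/2}$ consistent.
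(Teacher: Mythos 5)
Your proof is correct and takes essentially the same approach as the paper's: the same disintegration of the marginal posterior over the nuisance posterior, the same split of the integral at $H_n$ with the supremum bound on the sieve and the trivial bound (by assumption \emph{(i)}) off it. The only cosmetic difference is that the paper applies $P_0^n$-expectation first and invokes Fubini's theorem (implicitly using boundedness to turn the in-probability hypotheses into convergence in mean), whereas you keep the inequality pathwise and conclude at the end that a sum of two sequences vanishing in $P_0$-probability vanishes in $P_0$-probability; the two formulations are equivalent here.
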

\begin{proof} Let $(M_n)$, $M_n\rightarrow\infty$ be given and
consider the posterior for the complement of (\ref{eq:strip}). By
assumption {\it (i)} and Fubini's theorem,
\[
  \begin{split}
  P_0^n\Pi\bigl(\,&\tht\in\Tht_n^c\bigm|X_1,\ldots,X_n\bigr)\\
    &\leq P_0^n\int_{H_n}
        \Pi\bigl(\,\tht\in\Tht_n^c\bigm|\eta,X_1,\ldots,X_n\bigr)\,
        d\Pi\bigl(\,\eta\bigm|X_1,\ldots,X_n\bigr)
      + o(1)\\
    &\leq P_0^n\sup_{\eta\in H_n}\Pi\bigl(\,n^{1/2}\|\tht-\tht_0\|> M_n
      \bigm|\eta,X_1,\ldots,X_n\bigr) + o(1),
  \end{split}
\]
the first term of which is $o(1)$ by assumption {\it (ii)}.
\end{proof}
Note that condition~(\ref{eq:ptwise}) requires \emph{uniform}
posterior convergence to $\tht_0$ at rate $n^{-1/2}$ under $P_0$,
in all misspecified parametric models
$\scrP_{\eta}=\{P_{\tht,\eta}:\tht\in \Tht\}$ for $\eta\in H_n$.
From this perspective, it is clear how semiparametric bias manifests
itself in Bayesian context: according to Kleijn and van~der~Vaart
\cite{Kleijn12} and Kleijn (2003) \cite{Kleijn03}, the
misspecified posterior on $\scrP_\eta$ concentrates at parametric
rate \emph{around the minimizer $\tht^*(\eta)$ of
$\tht\mapsto -P_0\log(p_{\tht,\eta}/p_0)$} rather than around
$\tht_0$. So one hopes for the eventuality that marginal posteriors
for $\eta$ concentrate on subsets $H_n$ such that the corresponding
KL-minimizers $\tht^*$ satisfy,
\begin{equation}
  \label{eq:nearstraight}
  \sup_{\eta\in H_n} \|\tht^*(\eta)-\tht_0\| = O\bigl(n^{-1/2}\bigr),
\end{equation}
in order for posterior concentration to occur on the strips
(\ref{eq:strip}). (Recall that marginal $n^{-1/2}$-consistency
is \emph{necessary} for (\ref{eq:assertBvM}).)

The difference between bias terms of orders $o(n^{-1/2})$ or
$O(n^{-1/2})$ is the absence or presence of bias in
the limit experiment \cite{LeCam72}. More precisely, a bias of
order $O(n^{-1/2})$ does not ruin marginal posterior consistency
at $n^{-1/2}$-rate but biases the centring sequence $\tilde\Delta_n$
of (\ref{eq:DefDeltaMS}), in congruence with (\ref{eq:biasedaslin})
and  theorem~25.59 in \cite{vdVaart98}. In theorem~\ref{thm:pan},
biased centring cannot occur because the form of $\tilde\Delta_n$
follows exclusively from the ILAN expansion of the integrated
likelihood. Nevertheless this form of semiparametric bias occurs with
surprisingly high frequency in semiparametric Bernstein-von~Mises
analyses \cite{Rivoirard09,Castillo11}: certain priors (Gaussian
in \cite{Rivoirard09,Castillo11}, but the problem cannot be expected
to be not limited to this class) combine with the likelihood
and distort LAN expansions by undesirable order-$O(n^{-1/2})$ bias of the
form $P_{\tht_0,\eta}\effscore_{\tht_0,\eta_0}$. The interested reader
is referred to Castillo (2011) \cite{Castillo11}.

To conclude we mention a lemma that proves the intuitively
reasonable assertion that convergence at rate $n^{-1/2}$ of
the posterior measures for a sequence of (misspecified)
parametric submodels to their individual Kullback-Leibler
minima implies their convergence to the true value of the
parameter, if the sequence of minima itself converges at
rate $1/\sqrt{n}$. The sequence of submodels may be chosen
stochastically, for example, we may ``model-select'' from
$\{\scrP_\eta:\eta\in H\}$ by means of a point-estimator
$\hat\eta_n$.
\begin{lemma}
Let $\scrP_n$ be a (possibly stochastic) sequence of
parametric models. Assume that the sequence of Kullback-Leibler
minima $\tht_n^*$ satisfies:
\begin{equation}
  \label{eq:MinKLDtoTrue}
  \sqrt{n}(\tht^*_n-\tht_0) = O_{P_0}(1).
\end{equation}
Furthermore, assume that for each of the (misspecified)
models $\scrP_n$, the posterior concentrates around
$\tht^*_n$ at rate $n^{-1/2}$ in $P_0$-expectation. Then,
for every sequence $M_n$ such that $M_n\rightarrow\infty$
\[
  \Pi_n\bigl( \sqrt{n}\|\tht-\tht_0\|> M_n
    |X_1,\ldots,X_n \bigr)\rightarrow 0,
\]
in $P_0$-probability.
\end{lemma}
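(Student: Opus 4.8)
The plan is to prove this by a straightforward triangle-inequality reduction, using the decomposition $\|\tht-\tht_0\|\leq\|\tht-\tht^*_n\|+\|\tht^*_n-\tht_0\|$ to transfer the assumed concentration of the (misspecified) posteriors around the Kullback-Leibler minimizers $\tht^*_n$ into concentration around the true value $\tht_0$. The only genuine analytic inputs are the uniform tightness in (\ref{eq:MinKLDtoTrue}) and the assumed $n^{-1/2}$-concentration around $\tht^*_n$, which I read as the statement that $P_0^n\Pi_n\bigl(\sqrt{n}\|\tht-\tht^*_n\|>M_n\bigm|X_1,\ldots,X_n\bigr)\to0$ for every $M_n\to\infty$; everything else is bookkeeping.

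Concretely, I would fix a sequence $M_n\to\infty$ and, given $\ep>0$, use (\ref{eq:MinKLDtoTrue}) to pick a constant $C>0$ and events $A_n=\{\sqrt{n}\|\tht^*_n-\tht_0\|\leq C\}$ with $P_0^n(A_n^c)<\ep/2$ for all large $n$. On $A_n$ the triangle inequality gives, for each realization and every $\tht$, the set inclusion $\{\sqrt{n}\|\tht-\tht_0\|>M_n\}\subset\{\sqrt{n}\|\tht-\tht^*_n\|>M_n-C\}$, so by monotonicity of the posterior measure, $\Pi_n\bigl(\sqrt{n}\|\tht-\tht_0\|>M_n\bigm|X_1,\ldots,X_n\bigr)\leq\Pi_n\bigl(\sqrt{n}\|\tht-\tht^*_n\|>M_n-C\bigm|X_1,\ldots,X_n\bigr)$. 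Since $M_n-C\to\infty$, the concentration hypothesis yields $P_0^n\Pi_n\bigl(\sqrt{n}\|\tht-\tht^*_n\|>M_n-C\bigm|X_1,\ldots,X_n\bigr)\to0$, and Markov's inequality upgrades this $L^1$-convergence to convergence in $P_0$-probability. Thus for any prescribed $\delta>0$ the event on which the right-hand posterior mass exceeds $\delta$ has $P_0$-probability below $\ep/2$ eventually; splitting $P_0^n\bigl(\Pi_n(\sqrt{n}\|\tht-\tht_0\|>M_n\mid\cdots)>\delta\bigr)$ over $A_n$ and $A_n^c$ and invoking the inclusion on $A_n$ then closes the estimate at $\ep$, which is the desired convergence in $P_0$-probability.

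The one point that requires care, rather than a true obstacle, is that the minimizers $\tht^*_n$ are themselves data-dependent (the models $\scrP_n$ may be selected stochastically, \eg\ via a point-estimator $\hat\eta_n$), so $C$ bounds a random quantity and the triangle-inequality inclusion must be applied realization-by-realization \emph{before} taking $P_0$-expectations. The main obstacle, to the extent there is one, is pinning down the precise reading of ``concentrates around $\tht^*_n$ at rate $n^{-1/2}$ in $P_0$-expectation'' so that it matches the for-every-$M_n\to\infty$ form of the conclusion; once it is read in the uniform-over-$M_n$ sense above, no further input is needed and the argument is purely elementary.
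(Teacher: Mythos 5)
Your proof is correct. The paper gives no in-text argument for this lemma (it simply defers to lemma~4.18 of Kleijn (2003) \cite{Kleijn03}), and your triangle-inequality reduction --- restricting to the high-probability event $\{\sqrt{n}\|\tht^*_n-\tht_0\|\leq C\}$, applying the pointwise inclusion $\{\sqrt{n}\|\tht-\tht_0\|>M_n\}\subset\{\sqrt{n}\|\tht-\tht^*_n\|>M_n-C\}$ realization-by-realization before taking expectations, and upgrading the $L^1$-bound to convergence in $P_0$-probability via Markov's inequality --- is exactly the natural argument that citation stands for; your uniform-in-$M_n$ reading of the concentration hypothesis and your careful treatment of the data-dependence of $\tht^*_n$ are both the intended ones.
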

\begin{proof} See lemma~4.18 in Kleijn (2003) \cite{Kleijn03}.
\end{proof}

To resolve semiparametric bias, alternative point-estimation methods
(\eg\ regularized likelihood maximization by inclusion of suitable
penalties, or, replacement of score equations by general estimating
equations) are applied. This suggests that to prevent semiparametric
bias in Bayesian context, one should expect either the model to be small
enough for $\scrF_0$ to satisfy the Donsker property, or, the prior to
be concentrated (enough) on submodels for which $\scrF_0$ is Donsker.


\subsection{Application in examples}
\label{sub:panapp}

In the three examples below (including the irregular boundary
support problem), the model is kept such that $\scrF_0$ is a
Donsker class.

\begin{example}{\it (Partial linear regression, cont.)}\\
Concerning marginal consistency at parametric rate in the partial
linear model, let $(M_n)$, $M_n\rightarrow\infty$ be given and
define $\Tht_n$ as in (\ref{eq:strip}). Here, $\scrP_n=\scrP^*$
for all $n\geq1$, all $\zeta$ and all $\tht\in\Tht_n^c$,
\[
  \begin{split}
  \PP_n\log&\frac{q_{n,\tht,\zeta}}{q_{n,\tht_0,\zeta}}
    = (\tht-\tht_0)\sum_{i=1}g_{\zeta}(X_i)\\
    &\quad -\frac{n}2(\tht-\tht_0)^2P_{\tht_0,\eta_0+\zeta}(g_{\zeta})^2
      -\frac{n}2(\tht-\tht_0)^2(\PP_n-P_{\tht_0,\eta_0+\zeta})(g_{\zeta})^2.
  \end{split}
\]
Because $P_{\tht_0,\eta_0+\zeta}(g_{\zeta})^2=\effFI_{\tht_0,\eta_0}$
for all $\zeta$ and $PU^4<\infty$, the last term converges
to zero $P_0$-almost-surely, uniform in $\zeta$. Note that
for all $\zeta$, the no-bias condition is satisfied
\emph{exactly},
\[
  P_{\tht_0,\eta_0+\zeta}g_{\zeta}= P_{\tht_0,\eta_0+\zeta}
    \bigl(e(U-E[U|V])\bigr)=0,
\]
so that $n^{1/2}\PP_ng_{\zeta}=\GG_ng_{\zeta}$ under all
$P_{\tht_0,\eta_0+\zeta}$. We conclude that,
\[
  \sup_{\tht\in\Tht_n^c}\sup_{\zeta\in H-\eta_0}
    \PP_n\log\frac{q_{n,\tht,\zeta}}{q_{n,\tht_0,\zeta}}
    \leq M_n\sup_{\zeta\in H-\eta_0}\bigl|\GG_ng_{\zeta}\bigr|
      -\frac12\, M_n^2\,\effFI_{\tht_0,\eta_0}
      + o_{P_0}(1).
\]
Assume that $H$ is a $P_0$-Donsker class and that the efficient Fisher
information is non-singular at $P_0$. The Donsker property guarantees
asymptotic tightness of $\sup_{\zeta\in H-\eta_0}|\GG_ng_{\zeta}|$ so
that lemma~\ref{lem:lehmann} holds and (\ref{eq:sqrtn}) is valid.
We note that $h\mapsto s_n(h)$ is continuous for every $n\geq1$ (see
(\ref{eq:parallellik})) and ILAN according
to the previous section. Applying theorem~\ref{thm:pan}, we conclude
that (\ref{eq:assertBvM}) holds.
\end{example}
\begin{example}{\it (Normal location mixtures, cont.)}\\
Based on examples~25.35,~25.36 and 25.61 in \cite{vdVaart98},
we see that the bias term in the first condition in (\ref{eq:nobias})
vanishes \emph{exactly} and \emph{globally} in the normal location
mixture model:
\[
  P_{\sigma_0,F}\effscore_{\sigma_0,F_0}=0,
\]
for all $F\in\scrD[0,1]$. Referring to the previous example,
this suggests strongly that semiparametric
bias does not play a role and problems regarding
marginal posterior convergence at parametric rate are 
not anticipated. Indeed, preliminary calculations 
indicate that for all $\sigma\in\Tht_n^c$,
the empirical KL divergences $-\PP_n\log(q_{n,\sigma,F-F_0}/q_{n,\sigma_0,F-F_0})$
stay above $D\,(\sigma-\sigma_0)^2$ (for some $D>0$) up to
$o_{P_0}(1)$ uniformly in $F\in D[0,1]$, so that (\ref{eq:lehmann})
would be satisfied. (The lack of a rigorous proof is the second
reason why conjecture~\ref{conj:nlm} is not a theorem.) We note
that $h\mapsto s_n(h)$ is continuous for every $n\geq1$ and ILAN
according to the previous section. Applying theorem~\ref{thm:pan},
we conclude that (\ref{eq:assertBvM}) holds.
\end{example}
\begin{example}{\it (Support boundary estimation, cont.)}\\
Integral LAE was verified in the previous section and continuity of
$h\mapsto s_n(h)$ (on $(-\infty,\Delta_n]$) is implied by the integral
LAE expansion. In the irregular case, marginal consistency at
rate $n^{-1}$ follows from lemma~3.3 in \cite{Kleijn13},
which is completely analogous to lemma~6.1 in \cite{Bickel12}.
To show that the condition is satisfied, note
that for fixed $x$ and $\eta$, the map $\theta\mapsto p_{\theta,\eta}(x)$
is monotone increasing. Therefore
\[
  \sup_{\tht\in\Tht_n^c}\frac{1}{n}
    \log\prod_{i=1}^n\frac{p_{\tht,\eta}}{p_{\tht_0,\eta}}(X_i)
    \leq \frac{1}{n}\log\prod_{i=1}^n\frac{\eta(X_i-\tht^*)}{\eta(X_i-\tht_0)}
      1_{\{X_{(1)}\geq\tht^*\}}(\sample_n),
\]
where $\tht^* = X_{(1)}$ if $X_{(1)} \geq \tht_0+M_n/n$, or 
$\tht_0-M_n/n$ otherwise. We first note that $X_{(1)} < \tht_0+M_n/n$
with probability tending to one. Indeed, shifting the distribution to 
$\tht=0$, we calculate,
\[
  P^n_{0,\eta_0} \Bigl(X_{(1)} \geq \frac{M_n}{n}\Bigr)
    = \Bigl(1- \int_0^{\frac{M_n}{n}}\eta_0(x)\, dx\Bigr)^n
    \leq \exp\Bigl(-n\int_0^{\frac{M_n}{n}}\eta_0(x)\, dx\Bigr).
\]
By lemma~5.1 in \cite{Kleijn13}, the right-hand side of the above display
is bounded further as follows,
\[
  \exp\Bigl(-\gamma_{\tht_0,\eta_0}M_n
    + M_n\int_0^{\frac{M_n}{n}} |\eta_0'(x)|\, dx\Bigr)
  \leq \exp\Bigl(-\frac{\gamma_{\tht_0,\eta_0}}{2} M_n\Bigr),
\]
for large enough $n$. We continue with $\tht^*=\tht_0-M_n/n$. 
By absolute continuity of $\eta$ we have
\[
  \eta(X_i-\tht^*) = \eta(X_i-\tht_0) 
    + \int_{X_i-\tht_0}^{X_i-\tht^*}\eta'(y)\, dy,
\]
and the conditions on the nuisance $\eta$ yield the following bound,
\[
  \int_{X_i-\tht_0}^{X_i-\tht^*}\eta'(y)\, dy
    \leq (\tht_0-\tht^*) (S-\alpha)\eta(X_i-\tht_0).
\]
Therefore
\[
  \frac{1}{n}\log\prod_{i=1}^n\frac{\eta(X_i\!-\!\tht^*)}{\eta(X_i\!-\!\tht_0)}
    1_{\{X_{(1)}\geq\tht^*\}}(\sample_n)
    \leq \frac{1}{n}\log \Bigl(1\!-\!\frac{(\alpha\!-\!S)M_n}{n}\Bigr)^n
    \leq -\frac{(\alpha\!-\!S)M_n}{n}.
\]
With $C < \alpha-S$, the condition of lemma~3.3 in \cite{Kleijn13} is 
satisfied. We conclude that (\ref{eq:laebvm}) holds.
\end{example}


\section{Main results}
\label{sec:main} 

Before we state the main results of this paper, general conditions
imposed on models and priors are formulated.
\begin{itemize}
\item[(i)] {\em Model assumptions}\\
  The model $\scrP$ is assumed to be
  well-specified and dominated by a $\sigma$-finite measure on the 
  samplespace and parametrized identifiably on $\Tht\times H$, with 
  $\Tht\subset\RR^k$ open and $H$ a subset of a metric vector-space 
  with metric $d_H$. It is assumed that $(\tht,\eta)\mapsto
  P_{\tht,\eta}$ is continuous. We also assume that there exists an open 
  neighbourhood $U_0\subset\Tht$ of $\tht_0$ on which approximately
  least-favourable submodels $\eta_n:U_0\rightarrow H$ are defined.
\item[(ii)] {\em Prior assumptions}\\
  For the prior $\Pi$ on (the Borel $\sigma$-algebra of) $\scrP$ we
  endow $\Tht\times H$ with a Borel product-prior $\Pi_\Tht\times\Pi_H$.
  Also it is assumed that the prior $\Pi_\Tht$ is \emph{thick} (that is,
  Lebesgue absolutely continuous with continuous and strictly positive
  density).
\end{itemize}

\subsection{Main theorems}
\label{sub:mainthm}

With the above general considerations for model and prior in mind, we 
formulate the main result of this paper.
\begin{theorem}
\label{thm:sbvmone} {\rm (Semiparametric Bern\-stein-\-Von~Mises)}\\
Let $X_1,X_2,\ldots$ be distributed \iid-$P_0$, with $P_0\in\scrP$ and
let $\Pi_\Tht$ be thick at $\tht_0$. Suppose that for large enough $n$,
$h\mapsto s_n(h)$ is continuous $P_0^n$-almost-surely. Also
assume that the $\tht\mapsto Q_{n,\tht,\zeta}$ are stochastically LAN at
$\tht_0$ in the $\tht$-direction for all $\zeta$ in a neighbourhood
of $\zeta=0$ and that the efficient Fisher information
$\effFI_{\tht_0.\eta_0}$  is non-singular. Furthermore, assume that
there exists a sequence $(\rho_n)$ with $\rho_n\downarrow0$, 
$n\rho_n^2\rightarrow\infty$ such that (\ref{eq:approxU}) holds
and:
\begin{itemize}
\item[{\it (i)}] For all $M>0$, there exists a $K>0$ such that, for large enough $n$,
  \[
    \Pi_H\bigl( K_n(\rho_n,M) \bigr) \geq e^{-Kn\rho_n^2}.
  \]
\item[{\it (ii)}] For all $n$ large enough, the Hellinger metric entropy
  satisfies,
  \[
    N\bigl(\rho_n,H,d_H\bigr)\leq e^{n\rho_n^2}.
  \]
\item[(iii)] For every bounded, stochastic $(h_n)$ and all $L>0$,
Hellinger distances satisfy the uniform bound,
  \[
    \sup_{\{\eta\in H:d_H(\eta,\eta_0)\geq L\rho_n\}}\,
      \frac{H(P_{\tht_n(h_n),\eta},P_{\tht_0,\eta})}{H(P_{\tht_0,\eta},P_0)}=o(1).
  \]
\item[(iv)] For every $(M_n)$, $M_n\rightarrow\infty$, the posterior
satisfies,
  \[
    \Pi_n\bigl(\,\|h\|\leq M_n\bigm|X_1,\ldots,X_n\,\bigr)\convprob{P_0} 1.
  \]
\end{itemize}
Then the sequence of marginal posteriors for $\tht$ converges in total
variation to a normal distribution,
\begin{equation}
  \label{eq:ConvTV}
  \sup_{A}\Bigl|\,
    \Pi_n\bigl(\,h\in A\bigm|X_1,\ldots,X_n\,\bigr)
    - N_{\effDelta_n,\effFI_{\tht_0,\eta_0}^{-1}}(A)
  \,\Bigr| \convprob{P_0} 0,
\end{equation}
centred on $\effDelta_n$ with covariance matrix $\effFI_{\tht_0,\eta_0}^{-1}$.
\end{theorem}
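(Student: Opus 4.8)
The plan is to obtain the assertion (\ref{eq:ConvTV}) as the composition of three results already established in the paper: posterior consistency under $n^{-1/2}$-perturbation (Theorem~\ref{thm:pertroc}), the integral LAN expansion (Theorem~\ref{thm:ilanone}), and posterior asymptotic normality (Theorem~\ref{thm:pan}). The architecture is a chain: Theorem~\ref{thm:pertroc} supplies the perturbed nuisance consistency that is the key hypothesis of Theorem~\ref{thm:ilanone}; the latter delivers the ILAN expansion (\ref{eq:ilan}), which is in turn the central hypothesis of Theorem~\ref{thm:pan}; and condition (iv) supplies the remaining input (\ref{eq:sqrtn}) to Theorem~\ref{thm:pan}, whose conclusion is precisely (\ref{eq:ConvTV}). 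Thus the proof reduces to checking that the hypotheses of the present theorem feed each link of this chain, with a single rate $(\rho_n)$ threaded through all three steps.

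First I would invoke Theorem~\ref{thm:pertroc} with the rate $(\rho_n)$ given in the statement. By the model assumptions there is a Hellinger-continuous $\scrP^*$ and approximately least-favourable $\scrP_n$, so a $\scrP^*$ of the required form is available and we may take $(\rho_n)$ to be a rate witnessing approximate least-favourability as well, so that (\ref{eq:approxhell}) and (\ref{eq:approxU}) hold for it. Conditions (i) and (ii) of the present theorem are verbatim conditions (i) and (ii) of Theorem~\ref{thm:pertroc}, and condition (iii) here is the cone bound (\ref{eq:Hcone}). The only extra ingredient demanded by Theorem~\ref{thm:pertroc}(iii) is $d_H(\eta^*(\tht_n(h_n)),\eta_0)=o(\rho_n)$; this follows from approximate least-favourability by the triangle inequality, since $d_H(\eta^*(\tht_n(h_n)),\eta_n(\tht_n(h_n)))=o(\rho_n)$ by (\ref{eq:approxhell}) while $d_H(\eta_n(\tht_n(h_n)),\eta_0)=O(n^{-1/2})=o(\rho_n)$ by smoothness of the approximating submodels together with $\eta_n(\tht_0)=\eta_0$ and $n\rho_n^2\to\infty$. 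Hence the conditional nuisance posterior is consistent under $n^{-1/2}$-perturbation at rate $(\rho_n)$.

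Next I would apply Theorem~\ref{thm:ilanone}, whose hypotheses are a Hellinger-continuous $\scrP^*$ and approximately least-favourable $\scrP_n$ (available by the model assumptions, with the sLAN property in the $\tht$-direction and the score-convergence (\ref{eq:approxscore}) furnished by the sLAN hypothesis and approximate least-favourability), together with perturbed consistency at a rate valid in (\ref{eq:approxhell}) and (\ref{eq:approxU}). The rate produced in the previous step is exactly such a rate, so Theorem~\ref{thm:ilanone} yields (\ref{eq:ilan}) for every $O_{P_0}(1)$ sequence $(h_n)$. Finally I would feed this into Theorem~\ref{thm:pan}: $\Pi_\Tht$ is thick and $h\mapsto s_n(h)$ is continuous by hypothesis, $\effFI_{\tht_0,\eta_0}$ is non-singular, (\ref{eq:sqrtn}) is condition (iv), and $P_0\effscore_{\tht_0,\eta_0}=0$ follows because each score $g_{n,0}$ has $P_0$-mean zero (it is a score through $Q_{n,\tht_0,0}=P_0$) and $\effscore_{\tht_0,\eta_0}$ is their $L_2(P_0)$-limit via (\ref{eq:approxscore}). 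All hypotheses of Theorem~\ref{thm:pan} being met, its conclusion is exactly (\ref{eq:ConvTV}), with centring $\effDelta_n$ and covariance $\effFI_{\tht_0,\eta_0}^{-1}$.

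The main obstacle is not this assembly but the rate bookkeeping that makes it legitimate: one needs a single sequence $(\rho_n)$ with $n\rho_n^2\to\infty$ that simultaneously satisfies the entropy and prior-mass bounds (i)--(ii), the domination condition (\ref{eq:approxU}), and the approximation bound (\ref{eq:approxhell}). This is precisely the point flagged in the discussion following Theorem~\ref{thm:ilanone}: the argument breaks if the rate forced by (\ref{eq:approxU}) vanishes faster than the optimal perturbed-consistency rate. In the present formulation this conflict is assumed away, since (\ref{eq:approxU}) is postulated for the same $(\rho_n)$ used in (i)--(iii); when (\ref{eq:approxU}) holds for \emph{every} $\rho_n\downarrow0$ (as under Lemma~\ref{lem:Udom}), one may replace (i)--(iii) by the weaker Corollary~\ref{cor:conspert} and let $(\rho_n)$ remain entirely internal to the argument. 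The genuinely delicate hypothesis in applications is condition (iv), marginal consistency at parametric rate, whose failure is the locus of semiparametric bias; here it is taken as given.
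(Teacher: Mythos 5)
Your proposal is correct and follows essentially the same route as the paper, whose proof is exactly this assembly: Theorem~\ref{thm:pertroc} (with Corollary~\ref{cor:conspert} as the rate-free alternative) feeding perturbed nuisance consistency into Theorem~\ref{thm:ilanone}, whose ILAN expansion together with condition \textit{(iv)} feeds Theorem~\ref{thm:pan}. Your added details — the triangle-inequality verification of $d_H(\eta^*(\tht_n(h_n)),\eta_0)=o(\rho_n)$ via (\ref{eq:approxhell}) and the sLAN bound $d_H(\eta_n(\tht_n(h_n)),\eta_0)=O(n^{-1/2})$, and the identification $P_0\effscore_{\tht_0,\eta_0}=0$ as the $L_2(P_0)$-limit of mean-zero scores — are consistent with how the paper handles these points (notably in Lemma~\ref{lem:translate}).
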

\begin{proof}
The assertion follows from combination of theorem~\ref{thm:pertroc}, 
corollary~\ref{cor:conspert}, theorem~\ref{thm:ilanone} and
theorem~\ref{thm:pan}.
\end{proof}
Conditions {\it (i)} and {\it (ii)} also arise when considering
Hellinger rates for nonparametric posterior convergence, and the
methods of Ghosal \ea\ (2000) \cite{Ghosal00} can be applied in
the present context with minor modifications. Typically, the
numerator in condition {\it (iii)} is of order $O(n^{-1/2})$
so it is easily satisfied for nonparametric rates $(\rho_n)$. 
Condition~{\it (iv)} of theorem~\ref{thm:sbvmone} is the most
significant one: note, first of all, that {\it (iv)} is necessary.
Subsection~\ref{sub:marginal}
shows that formulation of straightforward sufficient conditions is
hard in generality. Condition~{\it (iv)} involves the nuisance prior
and, as such, poses a condition for the nuisance prior. In the
examples, the `hard work' stems from condition {\it (iv)}: for
example, $\alpha>1/2$ H\"older smoothness and boundedness of the
family of regression functions in corollary~\ref{cor:smoothplr}
are imposed in order to satisfy this condition. Since
conditions~{\it (i)} and~{\it (ii)} appear quite reasonable and
conditions~(\ref{eq:approxU}) and~{\it (iii)} are satisfied
relatively easily, condition~{\it (iv)} should be viewed as the
most complicated in an essential way.

Consider the rate $(\rho_n)$: on the one hand, $(\rho_n)$ must
converge to zero fast enough to satisfy the second-order
approximation condition (\ref{eq:approxU}), on the other hand,
$(\rho_n)$ is fixed at or above the minimax Hellinger rate for
estimation of the nuisance (with known $\tht_0$) by condition
{\it (ii)} and must converge to zero slowly enough to
satisfy conditions {\it (i)} and {\it (iii)}. Lemma~\ref{lem:Udom}
shows that in many semiparametric models approximately
least-favourable reparametrizations exist that satisfy
(\ref{eq:approxU}) for \emph{any} $(\rho_n)$. In that case,
conditions {\it (i)}, {\it (ii)} and {\it (iii)} can be weakened
and the rate $(\rho_n)$ does not need to be mentioned explicitly
in the formulation of the theorem. This enables a rate-free
corollary in which conditions {\it (i)} and {\it (ii)} above
are weakened to become comparable to those of Schwartz (1965)
\cite{Schwartz65} for nonparametric posterior consistency,
rather  than those for posterior rates of convergence following
Ghosal, Ghosh and van~der~Vaart \cite{Ghosal00}.
\begin{corollary}
\label{cor:simplesbvm} {\rm (Semiparametric Bernstein-Von~Mises, rate-free)}\\
Let $X_1,X_2,\ldots$ be distributed \iid-$P_0$, with $P_0\in\scrP$ and
let $\Pi_\Tht$ be thick at $\tht_0$. Suppose that for large enough $n$,
$h\mapsto s_n(h)$ is continuous $P_0^n$-almost-surely. Assume that
the $\tht\mapsto Q_{n,\tht,\zeta}$ are stochastically LAN at
$\tht_0$ in the $\tht$-direction, for all $\zeta$ in a neighbourhood
of $\zeta=0$ and that the efficient Fisher information
$\effFI_{\tht_0.\eta_0}$  is non-singular. Also assume that
there exists a $\rho>0$ such that (\ref{eq:approxU}) holds with
$\rho_n=\rho$. If,
\begin{itemize}
\item[(i)] for all $\rho>0$, the Hellinger metric entropy satisfies,
  $N\bigl(\rho,H,d_H\bigr) < \infty$ and the nuisance prior satisfies
  $\Pi_H\bigl( K(\rho) \bigr) > 0$,
\item[(ii)] for every $M>0$, there exists an $L>0$ such that for all
  $\rho>0$ and large enough $n$, $K(\rho) \subset K_n(L\rho,M)$,
\item[(iii)] Hellinger distances satisfy,
  $\sup_{\eta\in H}H(P_{\tht_n(h_n),\eta},P_{\tht_0,\eta})=O(n^{-1/2})$,
\item[(iv)] for every $M_n\rightarrow\infty$, we have
  $\Pi_n\bigl(\,\|h\|\leq M_n\bigm|X_1,\ldots,X_n\,\bigr)\convprob{P_0}1$,
\end{itemize}
then the sequence of marginal posteriors for $\tht$ converges in
total variation to a normal distribution,
\[
  \sup_{A}\Bigl|\,
    \Pi_n\bigl(\,h\in A\bigm|X_1,\ldots,X_n\,\bigr)
    - N_{\effDelta_n,\effFI_{\tht_0,\eta_0}^{-1}}(A)
  \,\Bigr| \convprob{P_0} 0,
\]
centred on $\effDelta_n$ with covariance matrix $\effFI_{\tht_0,\eta_0}^{-1}$.
\end{corollary}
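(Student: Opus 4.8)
The plan is to deduce the corollary by chaining Corollary~\ref{cor:conspert}, Theorem~\ref{thm:ilanone} and Theorem~\ref{thm:pan}, after verifying that the weakened hypotheses feed correctly into each link (Hellinger-continuity of $\scrP^*$ and the existence of approximately least-favourable $\scrP_n$ being standing assumptions from section~\ref{sec:main}). The structural fact that makes the rate-free version work is that, although (\ref{eq:approxU}) is assumed only at a single fixed radius $\rho$, it then holds at \emph{every} vanishing rate: in the definition (\ref{eq:approxU}) the ball $B(\rho')=\{\zeta:d_H(\eta_0+\zeta,\eta_0)<\rho'\}$ grows with $\rho'$, so $U_n(\rho',h)\leq U_n(\rho,h)$ whenever $\rho'\leq\rho$, whence $U_n(\rho_n,h_n)\leq U_n(\rho,h_n)=O(1)$ for any $\rho_n\downarrow0$ and any bounded stochastic $(h_n)$. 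This is exactly the situation flagged after Theorem~\ref{thm:ilanone}, in which the nuisance rate need never be named in the final statement.

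First I would read off that conditions {\it (i)}--{\it (iii)} of the corollary are precisely the hypotheses of Corollary~\ref{cor:conspert}: finiteness of $N(\rho,H,d_H)$ and positivity of $\Pi_H(K(\rho))$ for all $\rho>0$ are its primary hypotheses, condition {\it (ii)} is its inclusion hypothesis $K(\rho)\subset K_n(L\rho,M)$, and condition {\it (iii)} is its uniform bound $\sup_{\eta\in H}H(P_{\tht_n(h_n),\eta},P_{\tht_0,\eta})=O(n^{-1/2})$. Corollary~\ref{cor:conspert} then supplies a rate $(\rho_n^{c})$, with $\rho_n^{c}\downarrow0$ and $n(\rho_n^{c})^2\to\infty$, at which the conditional nuisance posterior is consistent under $n^{-1/2}$-perturbation.

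Next I would manufacture a single rate at which all hypotheses of Theorem~\ref{thm:ilanone} hold at once. Approximate least-favourability of $(\scrP_n)$ comes with a rate $(\rho_n^{h})$ for which (\ref{eq:approxhell}) is valid; set $\rho_n=\max(\rho_n^{c},\rho_n^{h})$. Passing to this slower rate preserves consistency under perturbation, since the alternatives $D^c(\tht_n(h_n),L\rho_n)$ only shrink as the radius grows; it preserves (\ref{eq:approxhell}), since a quantity that is $o(\rho_n^{h})$ is a fortiori $o(\rho_n)$ when $\rho_n\geq\rho_n^{h}$; and (\ref{eq:approxU}) holds at $\rho_n$ by the nesting remark above. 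Moreover $n\rho_n^2\geq n(\rho_n^{c})^2\to\infty$ and $\rho_n\downarrow0$. Theorem~\ref{thm:ilanone} therefore applies and yields the integral LAN expansion (\ref{eq:ilan}) for every $O_{P_0}(1)$ sequence $(h_n)$.

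Finally I would invoke Theorem~\ref{thm:pan}. Thickness of $\Pi_\Tht$ and $P_0^n$-almost-sure continuity of $h\mapsto s_n(h)$ are hypotheses of the corollary; the integral LAN expansion just obtained, together with non-singularity of $\effFI_{\tht_0,\eta_0}$ (assumed) and $P_0\effscore_{\tht_0,\eta_0}=0$ (automatic, as $\effscore_{\tht_0,\eta_0}$ is an $L_2(P_0)$-limit of centred scores, \cf\ (\ref{eq:approxscore})), discharges every remaining hypothesis except the marginal rate condition (\ref{eq:sqrtn}), which is condition {\it (iv)} verbatim. Theorem~\ref{thm:pan} then gives the stated total-variation convergence of the marginal posterior to $N_{\effDelta_n,\effFI_{\tht_0,\eta_0}^{-1}}$. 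The only step requiring genuine argument is the rate bookkeeping of the third paragraph; everything else is a direct matching of hypotheses, and condition {\it (iv)}---whose difficulty is analysed in subsection~\ref{sub:marginal}---remains, exactly as in the main theorem, the substantive assumption rather than something derived here.
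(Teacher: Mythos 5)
Your proof is correct and takes essentially the same route as the paper's own (two-sentence) proof: chaining corollary~\ref{cor:conspert}, theorem~\ref{thm:ilanone} and theorem~\ref{thm:pan}, with the monotonicity of $U_n(\rho,h)$ in $\rho$ supplying (\ref{eq:approxU}) at every vanishing rate. The paper leaves the intermediate appeal to theorem~\ref{thm:ilanone} and the rate bookkeeping of your third paragraph implicit; your version merely makes these explicit, which is a faithful elaboration rather than a different argument.
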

\begin{proof}
Due to the fact that (\ref{eq:approxU}) holds for \emph{any} rate,
under conditions {\it (i)}, {\it (ii)}, {\it (iii)} and the stochastic 
LAN assumption, the assertion of corollary~\ref{cor:conspert} holds.
Condition {\it (iv)} then suffices for the assertion of
theorem~\ref{thm:pan}. 
\end{proof}


\subsection{Partial linear regression}
\label{sub:plr}

For the following theorem we think of the regression function
and the process (\ref{eq:kIBM}) as elements of the Banach space
$(C[0,1],\|\cdot\|_\infty)$. In the corollary that follows, we
relate to Banach subspaces with stronger norms to complete the
argument.
\begin{theorem}
\label{thm:plm}
Let $X_1, X_2, \ldots$ be an \iid\ sample from the partial linear
model (\ref{eq:plrmodel}) with $P_0=P_{\tht_0,\eta_0}$ for some
$\tht_0\in\Tht$, $\eta_0\in H$. Assume that $H$ is a subset of
$C[0,1]$ of finite metric entropy with respect to the uniform
norm and that $H$ forms a $P_0$-Donsker class. Regarding the
distribution of $(U,V)$, suppose that $PU=0$, $PU^2=1$ and
$PU^4<\infty$, as well as $P(U-{\rm E}[U|V])^2>0$, 
$P(U-{\rm E}[U|V])^4<\infty$ and
$v\mapsto {\rm E}[U|V=v]\in H$. Endow $\Tht$ with a prior that
is thick at $\tht_0$ and $C[0,1]$ with a prior $\Pi_H$ such 
that $H\subset {\rm supp}(\Pi_H)$. Then the 
marginal posterior for $\tht$ satisfies the Bern\-stein-\-Von~Mises 
limit,
\begin{equation}
  \label{eq:plm}
  \sup_{B\in\scrB}\Bigl|\,
    \Pi\bigl(\,\sqrt{n}(\tht-\tht_0)\in B\bigm|X_1,\ldots,X_n\,\bigr)
    - N_{\effDelta_n,\effFI_{\tht_0,f_0}^{-1}}(B)\,\Bigr| \convprob{P_0} 0,
\end{equation}
where $\effscore_{\tht_0,\eta_0}(X)=e(U-{\rm E}[U|V])$ and
$\effFI_{\tht_0,\eta_0}=P(U-{\rm E}[U|V])^2$.
\end{theorem}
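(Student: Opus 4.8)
The plan is to derive (\ref{eq:plm}) by verifying the hypotheses of the rate-free Corollary~\ref{cor:simplesbvm}, since the required ingredients have essentially been assembled piece by piece in the running partial-linear examples of the preceding sections. Because the model possesses a \emph{smooth} least-favourable submodel $\scrP^*=\{P_{\tht,\eta^*(\tht)}:\tht\in U_0\}$ with $\eta^*(\tht)=\eta_0-(\tht-\tht_0)\,{\rm E}[U|V]$ (well defined since $v\mapsto{\rm E}[U|V=v]\in H$), I would simply take $\scrP_n=\scrP^*$ for every $n$. This identifies the efficient score as $\effscore_{\tht_0,\eta_0}(X)=e(U-{\rm E}[U|V])$ and the efficient Fisher information as $\effFI_{\tht_0,\eta_0}=P(U-{\rm E}[U|V])^2$, which is non-singular by the assumption $P(U-{\rm E}[U|V])^2>0$, thereby pinning down the limit law in the assertion.

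First I would record the stochastic LAN expansion of $\tht\mapsto Q_{\tht,\zeta}$ in the $\tht$-direction, uniformly over $\zeta$ in a Hellinger neighbourhood of $0$: expansion (\ref{eq:parallellik}) supplies this, with remainder controlled by $PU^2<\infty$. Continuous differentiability of $\tht\mapsto\log(q_{\tht,\zeta}/q_{\tht_0,\zeta})$ together with the uniform, $\zeta$-independent bound $Q_{\tht,\zeta}g_{\tht,\zeta}^2=P(U-{\rm E}[U|V])^2+(\tht-\tht_0)^2P(U-{\rm E}[U|V])^4$ (finite by $P(U-{\rm E}[U|V])^4<\infty$) lets me invoke Lemma~\ref{lem:Udom} and conclude that the domination condition (\ref{eq:approxU}) holds for \emph{any} $\rho_n\downarrow0$, in particular for a fixed $\rho$. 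Conditions {\it (i)}--{\it (iii)} of Corollary~\ref{cor:simplesbvm} are then exactly the posterior-concentration facts established in subsection~\ref{sub:pcapp}: the bound $d_H(\eta_1,\eta_2)\le\ft12\|\eta_1-\eta_2\|_\infty^2$ yields finite $d_H$-entropy from the finite uniform-norm entropy of $H$ and shows $K(\rho)$ contains a $\|\cdot\|_\infty$-ball, so that $H\subset\supp(\Pi_H)$ gives $\Pi_H(K(\rho))>0$; the inclusion $K(\rho)\subset K_n(L\rho,M)$ follows from the computation leading to (\ref{eq:pertKL}); and the $\eta$-independent bound $H^2(P_{\tht_n(h_n),\eta},P_{\tht_0,\eta})=O(n^{-1})$ supplies the uniform Hellinger estimate.

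The hard part will be condition {\it (iv)}, marginal posterior consistency at the parametric rate, and it is here that the Donsker hypothesis on $H$ is indispensable. The plan is to apply Lemma~\ref{lem:lehmann}, whose key input (\ref{eq:lehmann}) demands a uniform (over $\zeta$ and over $\tht\in\Tht_n^c$) negative upper bound on the empirical log-likelihood ratio $\PP_n\log(q_{n,\tht,\zeta}/q_{n,\tht_0,\zeta})$. Using the sLAN expansion, this ratio splits into a dominating quadratic contribution proportional to $-(\tht-\tht_0)^2\effFI_{\tht_0,\eta_0}$ (with remainder uniformly negligible by $PU^4<\infty$, since $P_{\tht_0,\eta_0+\zeta}g_\zeta^2=\effFI_{\tht_0,\eta_0}$ for every $\zeta$) and a linear term built from $\GG_n g_\zeta$, the crucial point being the \emph{exact} no-bias identity $P_{\tht_0,\eta_0+\zeta}g_\zeta=P_{\tht_0,\eta_0+\zeta}\bigl(e(U-{\rm E}[U|V])\bigr)=0$, valid for every $\zeta$. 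On $\Tht_n^c$ the quadratic term falls below $-CM_n^2/n$ while the linear term is of strictly smaller order uniformly in $\zeta$, because the Donsker property of $H$ renders $\sup_\zeta|\GG_n g_\zeta|$ asymptotically tight and $M_n\to\infty$; this establishes (\ref{eq:lehmann}) and hence (\ref{eq:sqrtn}). With all hypotheses of Corollary~\ref{cor:simplesbvm} verified---continuity of $h\mapsto s_n(h)$ being clear from (\ref{eq:parallellik})---the conclusion (\ref{eq:plm}) follows immediately. The only genuine subtlety is ensuring that the no-bias cancellation and the quadratic control both survive the supremum over the infinite-dimensional nuisance direction $\zeta$, which is precisely what the exact vanishing of the bias and the Donsker condition jointly deliver.
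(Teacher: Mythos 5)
Your proposal is correct and follows essentially the same route as the paper: the paper's proof of this theorem is precisely the assembly of its running partial-linear examples (posterior consistency under perturbation via Corollary~\ref{cor:conspert}, the ILAN expansion via Lemma~\ref{lem:Udom} with $\scrP_n=\scrP^*$, and marginal $n^{-1/2}$-consistency via Lemma~\ref{lem:lehmann} using the exact no-bias identity and the Donsker hypothesis), combined through Theorem~\ref{thm:pan}, which is exactly the content of the rate-free Corollary~\ref{cor:simplesbvm} you invoke. Your identification of condition~{\it (iv)} as the essential step, and of the Donsker property as what makes the supremum over $\zeta$ tractable, matches the paper's own emphasis.
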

In the following we choose a prior by picking a suitable
$k$ in (\ref{eq:kIBM}) and conditioning on $\|\eta\|_{\alpha}<M$.
The resulting prior (denoted $\Pi^k_{\alpha,M}$) is shown to be
well-defined and have full support.
\begin{corollary}
\label{cor:smoothplr}
Let $\alpha>1/2$ and $M>0$ be given; choose 
$H=\{\eta\in C^\alpha[0,1]:\|\eta\|_\alpha<M\}$ and assume that
$\eta_0\in C^\alpha[0,1]$. Suppose the distribution of the covariates
$(U,V)$ is as in theorem~\ref{thm:plm}. Then, for any integer
$k>\alpha-1/2$, the conditioned prior $\Pi^k_{\alpha,M}$ is
well-defined and gives rise to a marginal posterior for $\tht$
satisfying (\ref{eq:plm}).
\end{corollary}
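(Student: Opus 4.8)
The plan is to verify that the choice $H=\{\eta\in C^\alpha[0,1]:\|\eta\|_\alpha<M\}$ together with the conditioned Gaussian prior $\Pi^k_{\alpha,M}$ meets every hypothesis of theorem~\ref{thm:plm}, after which the Bernstein--von~Mises limit (\ref{eq:plm}) follows at once. First I would settle well-definedness. Writing $\mu$ for the law on $C[0,1]$ of the process (\ref{eq:kIBM}), the polynomial part is smooth and the $k$-th derivative of $I_{0+}^kW$ is Brownian motion, which is Hölder of every order $\beta<\thalf$; hence the paths lie in $C^{k+\beta}[0,1]$ almost surely. Since $k>\alpha-\thalf$ I may fix $\alpha<\alpha'<k+\thalf$ and a corresponding $\beta<\thalf$ with $k+\beta=\alpha'$, so that $\mu$ is in fact supported on $C^{\alpha'}[0,1]$, which embeds compactly into $C^\alpha[0,1]$. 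As $\mu$ is then a centred Gaussian Radon measure, Gaussian small-ball positivity gives $\mu(\{\|\eta\|_\alpha<M\})>0$ (comparing the $\|\cdot\|_\alpha$- and $\|\cdot\|_{\alpha'}$-balls), so conditioning on $H$ is legitimate and $\Pi^k_{\alpha,M}$ is well defined.

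Next I would verify the two metric/empirical-process conditions, which is where the exponents enter decisively. A $\|\cdot\|_\alpha$-ball of radius $M$ is uniformly bounded and, by the classical estimate for Hölder balls, satisfies $\log N(\ep,H,\|\cdot\|_\infty)\lesssim(M/\ep)^{1/\alpha}$, which is finite for every $\ep>0$; this gives the finite uniform metric entropy required by theorem~\ref{thm:plm}. For the Donsker property I would feed the same bound into the uniform-entropy Donsker theorem (\cf\ \cite{vdVaart96}): the entropy integral $\int_0^1\sqrt{\log N(\ep,H,\|\cdot\|_\infty)}\,d\ep\lesssim\int_0^1\ep^{-1/(2\alpha)}\,d\ep$ converges precisely because $\alpha>\thalf$, so $H$ is $P_0$-Donsker. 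Together with $PU^4<\infty$ and $P(U-\mathrm{E}[U|V])^4<\infty$, the preservation properties of Donsker classes then also control the relevant score class $\{g_\zeta:\eta\in H\}$, exactly as needed for condition~{\it (iv)} of theorem~\ref{thm:sbvmone} in the partial linear example.

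The remaining and most delicate point is the support condition $H\subset\supp(\Pi^k_{\alpha,M})$, which for the consistency step is used only through $\|\cdot\|_\infty$-neighbourhoods. Fix $\eta_1\in H$, so $\|\eta_1\|_\alpha<M$. Mollification (after a reflection extension across the endpoints, which preserves $\|\cdot\|_\alpha$) produces smooth approximants $h_\ep$ with $\|h_\ep\|_\alpha\le\|\eta_1\|_\alpha<M$ and $\|h_\ep-\eta_1\|_\infty\to0$; each $h_\ep$ lies in the Cameron--Martin space $\HH=H^{k+1}[0,1]$ of the prior (the polynomial term in (\ref{eq:kIBM}) removes all boundary constraints). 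By quasi-invariance of $\mu$ under Cameron--Martin shifts and the positivity of $\mu(\{\|\eta\|_\alpha<r\})$, I get $\mu(\{\|\eta-h_\ep\|_\alpha<r\})>0$ for every $r>0$. Choosing first $\ep$ small so that $\|h_\ep-\eta_1\|_\infty<\delta/2$ and then $r<\min(\delta/2,\,M-\|h_\ep\|_\alpha)$, this event is contained in $\{\|\eta-\eta_1\|_\infty<\delta\}\cap\{\|\eta\|_\alpha<M\}$ and therefore carries positive conditioned mass; hence $\eta_1\in\supp(\Pi^k_{\alpha,M})$ and, $\eta_1$ being arbitrary, $H\subset\supp(\Pi^k_{\alpha,M})$. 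In particular $\eta_0\in H$ (which holds since $P_0\in\scrP$ forces $\|\eta_0\|_\alpha<M$) lies in the support, so all prior-mass conditions of theorem~\ref{thm:plm} are met and (\ref{eq:plm}) follows. The main obstacle I anticipate is precisely this reconciliation of the $C^\alpha$-conditioning with the $\|\cdot\|_\infty$-support over the non-separable Hölder space; the Hölder-norm-non-increasing mollification combined with Cameron--Martin quasi-invariance is the device that resolves it, while the role of the exponent constraints is transparent---$\alpha>\thalf$ buys the Donsker property and $k>\alpha-\thalf$ buys sample paths in $C^\alpha$.
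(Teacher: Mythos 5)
Your proposal is correct, and its skeleton coincides with the paper's: both reduce the corollary to a verification of the hypotheses of theorem~\ref{thm:plm}, and both obtain the entropy and Donsker conditions from the classical bound $\log N(\ep,H,\|\cdot\|_\infty)\lesssim \ep^{-1/\alpha}$ for H\"older balls, with $\alpha>1/2$ making the entropy integral converge (the paper routes this through bracketing numbers, you through uniform entropy; both are standard). The genuine difference is the support/well-definedness step. The paper regards $\Pi^k$ as a Gaussian measure on $C^\alpha[0,1]$ with the strengthened norm $\|\cdot\|=\|\cdot\|_\infty+\|\cdot\|_\alpha$, invokes the characterization of the support of a centred Gaussian measure as the closure of its RKHS $H^{k+1}[0,1]$ in that norm, and approximates $\eta_0$ by $\phi_\sigma\ast\eta_0$, claiming smallness of the difference in $\|\cdot\|$; positivity of $\|\cdot\|$-balls around $\eta_0$ then yields both $\Pi^k(\|\eta\|_\alpha<M)>0$ and the support statement at once. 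You instead combine small-ball positivity of $\|\cdot\|_\alpha$-balls around the origin (from path regularity in $C^{\alpha'}$, $\alpha<\alpha'<k+\thalf$) with Cameron--Martin quasi-invariance under the RKHS shifts $h_\ep$, and you require the mollified approximants to converge to $\eta_1\in H$ only in $\|\cdot\|_\infty$, keeping merely the bound $\|h_\ep\|_\alpha\le\|\eta_1\|_\alpha<M$. This buys you two things. First, it sidesteps a delicate point in the paper's argument: smooth functions are dense in $C^\alpha[0,1]$ under the $\alpha$-H\"older norm only within the little-H\"older subspace, so genuine $\|\cdot\|_\alpha$-approximation of an arbitrary $\eta_0\in C^\alpha[0,1]$ by its mollifications is not available in general; your argument never needs it, because quasi-invariance transfers positivity of the H\"older ball wholesale, and approximation is demanded only where it holds, in the uniform norm. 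Second, your argument is run for an arbitrary $\eta_1\in H$ and therefore delivers $H\subset{\rm supp}(\Pi^k_{\alpha,M})$ directly, which is the condition theorem~\ref{thm:plm} actually imposes, whereas the paper's proof is written for $\eta_0$ alone and then asserts the full-support conclusion. You also correctly observe that what is really needed is $\eta_0\in H$, i.e.\ $\|\eta_0\|_\alpha<M$, which is implicit in $P_0\in\scrP$. The only informalities in your write-up (Radon-ness of the law on the non-separable H\"older space, measurability of $\|\cdot\|_\alpha$ on $C[0,1]$) are resolved by the usual little-H\"older/separability device and are at the same level of rigour as the paper itself.
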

\begin{proof}
Choose $k$ as indicated; the Gaussian distribution of $\eta$ over
$C[0,1]$ is based on the RKHS $H^{k+1}[0,1]$ and denoted $\Pi^k$.
Since $\eta$ in (\ref{eq:kIBM}) has smoothness $k+1/2>\alpha$,
$\Pi^k(\eta\in C^\alpha[0,1])=1$. Hence, one may also
view $\eta$ as a Gaussian element in the H\"older class
$C^\alpha[0,1]$, which forms a separable Banach space even with
strengthened norm $\|\cdot\|=\|\eta\|_{\infty}+\|\cdot\|_{\alpha}$,
without changing the RKHS. The trivial
embedding of $C^\alpha[0,1]$ into $C[0,1]$ is one-to-one and
continuous, enabling
identification of the prior induced by $\eta$ on $C^\alpha[0,1]$
with the prior $\Pi^k$ on $C[0,1]$. Given $\eta_0\in C^\alpha[0,1]$
and a sufficiently smooth kernel $\phi_\sigma$ with bandwidth
$\sigma>0$, consider $\phi_\sigma\ast\eta_0\in H^{k+1}[0,1]$.
Since $\|\eta_0-\phi_\sigma\ast\eta_0\|_{\infty}$ is of order
$\sigma^\alpha$ and a similar bound exists for the $\alpha$-norm
of the difference \cite{vdVaart07}, $\eta_0$ lies in the closure
of the RKHS both with respect to $\|\cdot\|_{\infty}$ and to
$\|\cdot\|$. Particularly, $\eta_0$ lies in the support of $\Pi^k$,
in $C^\alpha[0,1]$ with norm $\|\cdot\|$. Hence,
$\|\cdot\|$-balls centred on $\eta_0$ receive non-zero prior mass,
\ie\ $\Pi^k(\|\eta-\eta_0\|<\rho)>0$ for all $\rho>0$. Therefore,
$\Pi^k(\|\eta-\eta_0\|_\infty<\rho,
\|\eta\|_\alpha<\|\eta_0\|_\alpha+\rho)>0$, which guarantees
that $\Pi^k(\|\eta-\eta_0\|_\infty<\rho,\|\eta\|_\alpha<M)>0$,
for small enough $\rho>0$. This implies that $\Pi^k(\|\eta\|_\alpha<M)>0$
and,
\[
  \Pi^k_{\alpha,M}(B) = \Pi^k\bigl(\,B\bigm|\|\eta\|_\alpha<M\,\bigl),
\]
is well-defined for all Borel-measurable $B\subset C[0,1]$. Moreover,
it follows that $\Pi^k_{\alpha,M}(\|\eta-\eta_0\|_\infty<\rho)>0$ for
all $\rho>0$. We conclude that $k$ times integrated Brownian motion
started at random, conditioned to be bounded by $M$ in $\alpha$-norm,
gives rise to a prior that satisfies ${\rm supp}(\Pi^k_{\alpha,M})=H$.
As is well-known \cite{vdVaart96}, the entropy numbers of $H$ with
respect to the
uniform norm satisfy, for every $\rho>0$, $N(\rho,H,\|\cdot\|_{\infty})
\leq K\rho^{-1/\alpha}$, for some constant $K>0$ that depends only on
$\alpha$ and $M$. The associated bound on the bracketing entropy
gives rise to finite bracketing integrals, so that $H$ universally
Donsker. Then, if the distribution of the covariates $(U,V)$ is as
assumed in theorem~\ref{thm:plm}, the Bern\-stein-\-Von~Mises limit
(\ref{eq:plm}) holds.
\end{proof}
Comparing the above result with sufficient conditions from the
frequentist literature on this model, one notices that the
restriction $\al>1/2$ is in line with earlier analyses but
{\it boundedness} of the $\alpha$-norm is more restrictive
than expected. However, there are good reasons to suspect that the
restriction on the regression class can be avoided here as well.
To see this, note that the Bern\-stein-\-Von~Mises limit (\ref{eq:plm})
holds for any value of the constant $M>0$ that lies above the
$\alpha$-norm of $\eta_0$, as in corollary~\ref{cor:smoothplr}.
Therefore there exists a sequence $(M_n)$, $M_n\rightarrow\infty$,
such that the corresponding sequence of priors $(\Pi^k_{\alpha,M_n})$
gives rise to marginal posteriors for the parameter $\tht$ that
still satisfy,
\[
  \sup_{B\in\scrB}\Bigl|\,
    \Pi^k_{\alpha,M_n}\bigl(\,\sqrt{n}(\tht-\tht_0)\in B
      \bigm|X_1,\ldots,X_n\,\bigr)
    - N_{\effDelta_n,\effFI_{\tht_0,f_0}^{-1}}(B)\,\Bigr| \convprob{P_0} 0.
\]
Then, one constructs an infinite convex combination of the priors
$(\Pi^k_{\alpha,M_n})$ to obtain a prior that does not depend on
the bound $M$ any longer. However, since we do not know in advance
which sequences of bounds $(M_n)$ diverge slowly enough to maintain
Bern\-stein-\-Von~Mises convergence, this proposal does not possess
great practical advantage.



\subsection{Normal location mixtures}
\label{sub:nlm}

Based on the discussion of the problem of variance estimation in
normal location mixtures as presented above, we conjecture the
following.
\begin{conjecture}
\label{conj:nlm}
Let $X_1, X_2, \ldots$ be an \iid\ sample from 
$P_0=P_{\sigma_0,F_0}$ in the semiparametric normal location mixture
model parametrized by (\ref{eq:nlm}). Let $\Sigma
=[\sigma_-,\sigma_+]\subset(0,\infty)$ have a thick prior and
$\scrD[0,1]$ a Dirichlet prior $D_{\alpha}$ with finite
base measure $\alpha$ that dominates the Lebesgue measure on $[0,1]$.
Assume that the efficient Fisher information at $P_0$ is
non-singular. Then the marginal posterior for the kernel variance
$\sigma$ has a Bernstein-von Mises limit of the form:
\[
  \sup_{B\in\scrB(\Sigma)}\Bigl|\,
    \Pi\bigl(\,\sqrt{n}(\sigma-\sigma_0)\in B
      \bigm|X_1,\ldots,X_n\,\bigr)
    - N_{\effDelta_n,\effFI_{\sigma_0,F_0}^{-1}}(B)\,\Bigr| \convprob{P_0} 0.
\]
\end{conjecture}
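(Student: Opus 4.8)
The plan is to verify the hypotheses of the rate-free semiparametric Bernstein-von~Mises corollary~\ref{cor:simplesbvm} for the normal location mixture model, assembling the pieces developed in the running example of this section. Several ingredients are already in hand: weak-$\ast$ compactness of $\scrD[0,1]$ together with the envelope bound (\ref{eq:envelope}) yields a Hellinger-continuous least-favourable submodel $\scrP^*=\{P_{\sigma,F^*(\sigma)}:\sigma\in\Sigma\}$ minimising the Kullback--Leibler divergence \wrt\ $P_0$; the nuisance space $\scrD[0,1]$ has finite $d_H$-entropy by theorem~3.1 of Ghosal and van~der~Vaart \cite{Ghosal01}; and the Dirichlet prior $D_\alpha$ charges every neighbourhood $K(\rho)$. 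This settles condition~{\it (i)} of corollary~\ref{cor:simplesbvm}; condition~{\it (ii)} ($K(\rho)\subset K_n(L\rho,M)$) follows from the Kullback--Leibler inequalities established earlier, and condition~{\it (iii)} ($\sup_F H(P_{\sigma_n(h_n),F},P_{\sigma_0,F})=O(n^{-1/2})$) from Strasser's lemma~17.3, which reduces the mixture Hellinger distance to $H(\Phi_{\sigma_n(h_n)},\Phi_{\sigma_0})=O(n^{-1/2})$. Combined with the stochastic LAN expansion and the domination bound (\ref{eq:approxU}), corollary~\ref{cor:conspert} then delivers posterior consistency under $n^{-1/2}$-perturbation at some rate $(\rho_n)$.

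Next I would construct the approximately least-favourable submodels. Since $\scrP^*$ has not been shown to be smooth, I would smooth the mixing measures $F^*(\sigma)$ with a scaled kernel sequence $\psi_n(x)=\tau_n^{-1}\psi(x/\tau_n)$ and compensate the smoothing at $F_0$, setting $G_n(\sigma)=\int\psi_n(\sigma'-\sigma)F^*(\sigma')\,d\sigma'$ and $F_n(\sigma)=(F_0-G_n(\sigma_0))+G_n(\sigma)$, so that $\scrP_n=\{P_{\sigma,F_n(\sigma)}:\sigma\in U_0\}$ passes through $P_0$. For a sufficiently smooth kernel $\psi$ these submodels are differentiable, with score splitting into the normal-scaling score plus the score along $G_n$; differentiability survives the $\zeta$-shift, giving the sLAN expansion (\ref{eq:qlan}) for every $\zeta$ in a Hellinger neighbourhood of $0$. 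Because $\scrP^*$ minimises the Kullback--Leibler divergence, the scores $g_{n,0}$ converge to $\effscore_{\sigma_0,F_0}$ as required by (\ref{eq:approxscore}). The domination condition (\ref{eq:approxU}) holds for {\em any} rate by lemma~\ref{lem:Udom}, because the $\sigma$-derivative of $\phi_\sigma$ is bounded, so the Fisher-information coefficient $Q_{n,\sigma,\zeta}(g_{n,\sigma,\zeta})^2$ can be bounded uniformly over $\sigma$ and $\zeta$. With consistency under perturbation and prior stability (lemma~\ref{lem:translate}) in place, theorem~\ref{thm:ilanone} then yields the integral LAN expansion (\ref{eq:ilan}).

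It remains to verify condition~{\it (iv)}, marginal posterior consistency at parametric rate, which I would obtain from lemma~\ref{lem:lehmann}. The favourable structural fact is that the no-bias condition holds {\em exactly} and {\em globally} here: by examples~25.35,~25.36 and~25.61 of \cite{vdVaart98}, $P_{\sigma_0,F}\effscore_{\sigma_0,F_0}=0$ for every $F\in\scrD[0,1]$, so the linear term in the expansion of the empirical log-likelihood-ratio vanishes uniformly in the nuisance. What remains to be shown is the testing-power hypothesis (\ref{eq:lehmann}): that the empirical Kullback--Leibler divergence $-\PP_n\log(q_{n,\sigma,F-F_0}/q_{n,\sigma_0,F-F_0})$ stays above $D\,(\sigma-\sigma_0)^2$ (for some $D>0$), up to $o_{P_0}(1)$, uniformly over $\sigma\in\Tht_n^c$ and $F\in\scrD[0,1]$. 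This calls for a uniform-in-$F$ positive lower bound on the quadratic (Fisher-information) term together with control of the empirical fluctuations of the score family, for which the $P_0$-Donsker property of $\scrF_0=\{\effscore_{\sigma_0,F}:F\in\scrD[0,1]\}$ is the natural tool. Once (\ref{eq:lehmann}) is established, lemma~\ref{lem:lehmann} gives condition~{\it (iv)}, and theorem~\ref{thm:pan} (equivalently corollary~\ref{cor:simplesbvm}) yields the asserted limit.

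I expect the two main obstacles to be precisely the two points at which rigour is presently lacking. The first is the Hellinger approximation (\ref{eq:approxhell}): controlling $\sup_{\|h\|\le M}d_H(\eta_n(\sigma_n(h)),\eta^*(\sigma_n(h)))=o(\rho_n)$ requires quantitative local regularity of the map $\sigma\mapsto F^*(\sigma)$ that has not been established, and although any rate is in principle achievable by letting $(\tau_n)$ decrease fast enough, trading the smoothing bias against the unknown modulus of continuity of $\scrP^*$ is delicate. The second, and harder, is the uniform empirical-Kullback--Leibler lower bound (\ref{eq:lehmann}) underlying marginal parametric-rate consistency, which must hold uniformly over the {\em entire} nuisance space $\scrD[0,1]$; establishing this rigorously is what would promote conjecture~\ref{conj:nlm} to a theorem.
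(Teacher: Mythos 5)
Your proposal follows essentially the same route as the paper's own treatment: posterior consistency under $n^{-1/2}$-perturbation via corollary~\ref{cor:conspert}, kernel-smoothed approximately least-favourable submodels $F_n(\sigma)=(F_0-G_n(\sigma_0))+G_n(\sigma)$ feeding theorem~\ref{thm:ilanone}, and marginal parametric-rate consistency via lemma~\ref{lem:lehmann} exploiting the exact, global no-bias identity $P_{\sigma_0,F}\effscore_{\sigma_0,F_0}=0$. The statement is a conjecture precisely because the paper, like you, cannot rigorously establish the two steps you flag as obstacles --- the Hellinger approximation (\ref{eq:approxhell}) for the smoothed submodels and the uniform empirical Kullback--Leibler lower bound (\ref{eq:lehmann}) --- so your proposal reproduces the paper's sketch faithfully, including its acknowledged gaps.
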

In this case we do not have a closed-form expression for the efficient
score function and, as a result, it could prove difficult to
point-estimate the efficient Fisher information at $\tht_0$ numerically.
As a result, computationally, there is no easy way to find confidence
ellipsoids of the form (\ref{eq:EffCI}). This circumstance clearly
demonstrates the practical value of the semiparametric
Bernstein-von~Mises theorem: when simulating the
marginal posterior for $\sigma$ (with methods very similar to those
discussed in Escobar and West (1995) \cite{Escobar95}), we do not
need estimates for the efficient Fisher information to approximate
credible sets, and hence, confidence sets.


\subsection{Support boundary estimation}
\label{sub:sbe}

Locally asymptotically exponential semiparametric problems are
covered by theorem~2.2 in Kleijn and Knapik (2013) \cite{Kleijn13}:
besides requiring LAE instead of LAN smoothness, conditions are
identical to those of theorem~\ref{thm:sbvmone} with one addition:
we require one-sided contiguity (condition {\it (iv)} of
theorem~2.2 in \cite{Kleijn13}) which is implicit in LAN context.

Based on the results of previous sections regarding the support
boundary problem, we now have the following irregular Bernstein-von~Mises
limit for the marginal posterior.
\begin{theorem}
\label{thm:SBvM-Example}
Let $X_1, X_2, \ldots$ be an \iid\ sample from the location model
of definition (\ref{eq:Esscher}) with $P_0 = P_{\theta_0,\eta_0}$ for some
$\theta_0 \in \Theta$, $\eta_0\in H$. Endow $\Theta$ with a prior that
is thick at $\theta_0$ and $\scrL$ with the prior $\Pi_\scrL$ of
definition (\ref{eq:sb-prior}) (or any other prior such that
$\scrL\subset{\rm supp}(\Pi_\scrL)$). Then the marginal posterior
for $\theta$ satisfies,
\begin{equation}
  \label{eq:Assertion}
  \sup_A\Bigl|\Pi(n(\theta-\theta_0)\in A\, |\, X_1, \ldots, X_n)
    - \NExp_{\Delta_n,\gamma_{\tht_0,\eta_0}}(A)\Bigr|\convprob{P_0} 0,
\end{equation}
where $\Delta_n$ is exponentially distributed with scale
$\gamma_{\tht_0,\eta_0} = \eta_0(0)$.
\end{theorem}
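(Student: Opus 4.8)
The plan is to obtain (\ref{eq:Assertion}) as an application of the locally asymptotically exponential counterpart of Theorem~\ref{thm:sbvmone}, namely theorem~2.2 in Kleijn and Knapik (2013) \cite{Kleijn13}, whose hypotheses differ from those of Theorem~\ref{thm:sbvmone} only in that LAE replaces sLAN smoothness and that a one-sided contiguity requirement is added (the latter being automatic in the LAN setting through Le~Cam's first lemma). The strategy is therefore to verify, one by one, the LAE analogues of the three structural ingredients --- consistency under perturbation, the integral expansion, and marginal consistency at the parametric ($n^{-1}$) rate --- each of which has in fact already been assembled for this model in the running examples. Because smoothness is unavailable here, the role of the least-favourable submodel $\scrP^*$ is played throughout by the adaptive submodel $\{P_{\tht,\eta_0}:\tht\in\Tht\}$, and the approximating sequence $\scrP_n$ may simply be taken constant and equal to it, the model assumptions (identifiability, domination) being routine.

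First I would record the local smoothness of the model. For each fixed nuisance $\eta\in H$, Theorem~V.2.2 in Ibragimov and Has'minskii (1981) \cite{Ibragimov81} gives local asymptotic exponentiality in the $\tht$-direction in the sense of Definition~\ref{df:LAE}, with rate constant $\gamma_{\tht_0,\eta}=\eta(0)$, the jump size at the support boundary; in particular $\gamma_{\tht_0,\eta_0}=\eta_0(0)$, which will be the scale appearing in the limit. The domination condition (\ref{eq:approxU}) and the required one-sided contiguity then follow from Lemma~\ref{lem:DomLemma}: the Esscher parametrization (\ref{eq:Esscher}) produces densities that are log-Lipschitz in $\tht$ with constant $\alpha+S$ (Lemma~4.1 in \cite{Kleijn13}), so hypothesis (\ref{eq:LogLipschitz}) holds with $m=\alpha+S$. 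Consistency of the conditional nuisance posterior under $n^{-1}$-perturbation has been established via the LAE analogue of Corollary~\ref{cor:conspert} (Corollary~3.1 in \cite{Kleijn13}), using finiteness of the Hellinger entropy of $H$ (bracketing entropy of a class of monotone functions, Theorem~2.7.5 in \cite{vdVaart96}), positivity of $\Pi_\scrL$ on Kullback-Leibler neighbourhoods (guaranteed by $\scrL\subset\supp(\Pi_\scrL)$), and the Hellinger bound $n\,H^2(P_{\tht_n(h_n),\eta},P_{\tht_0,\eta})\le 2M\gamma_{\tht_0,\eta}+O(n^{-1})$ (Lemma~4.4 in \cite{Kleijn13}). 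Feeding these into the LAE analogue of Theorem~\ref{thm:ilanone} (Theorem~3.2 in \cite{Kleijn13}) yields the integral LAE expansion of $s_n$.

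The remaining, and genuinely hardest, step is marginal consistency at rate $n^{-1}$ --- the LAE form of condition~{\it (iv)}, which subsection~\ref{sub:marginal} flags as the essential and most delicate hypothesis because it is where semiparametric bias would enter. For this model the obstruction is absent and the condition can be checked by hand: the key structural fact is that $\tht\mapsto p_{\tht,\eta}(x)$ is monotone for fixed $x$ and $\eta$, so the supremum of the empirical log-likelihood ratio over $\Tht_n^c=\{n|\tht-\tht_0|>M_n\}$ is controlled by combining (a) a tail bound showing $X_{(1)}<\tht_0+M_n/n$ with probability tending to one, via $P^n_{0,\eta_0}(X_{(1)}\ge M_n/n)\le\exp(-\ft12\,\gamma_{\tht_0,\eta_0}\,M_n)$ (Lemma~5.1 in \cite{Kleijn13}), with (b) the log-Lipschitz bound, which forces the normalized log-likelihood ratio below $-(\alpha-S)M_n/n$. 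This verifies the hypothesis of lemma~3.3 in \cite{Kleijn13} (the LAE analogue of Lemma~\ref{lem:lehmann}) for any $C<\alpha-S$, giving (\ref{eq:sqrtn}) at rate $n^{-1}$. With all hypotheses in place, theorem~2.2 of \cite{Kleijn13} delivers (\ref{eq:laebvm}), which upon rescaling $h=n(\tht-\tht_0)$ is exactly the assertion (\ref{eq:Assertion}), with limit $\NExp_{\Delta_n,\gamma_{\tht_0,\eta_0}}$, $\Delta_n$ asymptotically $\Exp_{0,\gamma_{\tht_0,\eta_0}}$-distributed and $\gamma_{\tht_0,\eta_0}=\eta_0(0)$. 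I expect the main obstacle to be precisely the marginal-rate step: although monotonicity makes it tractable here, it is the structurally critical uniformity over the infinite-dimensional nuisance on which the whole theorem hinges, and it is the one place where the absence of a semiparametric-bias term must be exploited rather than merely assumed.
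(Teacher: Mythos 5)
Your proposal is correct and takes essentially the same route as the paper: the paper also proves theorem~\ref{thm:SBvM-Example} by invoking theorem~2.2 of \cite{Kleijn13} (the LAE analogue of theorem~\ref{thm:sbvmone}) and verifying its hypotheses exactly as in the running support-boundary example --- LAE for fixed $\eta$ via Theorem~V.2.2 of \cite{Ibragimov81} with $\gamma_{\tht_0,\eta}=\eta(0)$, domination and one-sided contiguity via the log-Lipschitz property and lemma~\ref{lem:DomLemma}, consistency under $n^{-1}$-perturbation via Corollary~3.1 of \cite{Kleijn13}, the integral LAE expansion via Theorem~3.2 of \cite{Kleijn13}, and marginal $n^{-1}$-consistency via lemma~3.3 of \cite{Kleijn13} using the monotonicity of $\tht\mapsto p_{\tht,\eta}(x)$, the first-order-statistic tail bound and the bound $-(\alpha-S)M_n/n$. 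Your identification of the marginal-rate step as the essential one also matches the paper's own assessment.
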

In an example concerning a scaling parameter for which the
model satisfies the LAE property, a similar result is derived in
Kleijn and Knapik (2013) \cite{Kleijn13}.



\section{Proofs}
\label{sec:proofs} 

In this section we collect several proofs from earlier sections.
Following definition (\ref{eq:defD}), Hellinger neighbourhoods of
$\scrP_n$ are given by
\begin{equation}
  \label{eq:defDn}
  D_n(\tht,\rho)=\{\,\eta\in H\,:\,d_H(\eta,\eta_n(\tht))<\rho\,\},
\end{equation}
for all $\tht\in U_0$.

\begin{proof}{\bf of lemma~\ref{lem:Udom}}
Let $(h_n)$ be stochastic and upper-bounded by $M>0$. For every 
$\zeta$ and all $n\geq1$,
\[
\begin{split}
  Q_{n,\tht_0,\zeta}^n&\Biggl|\prod_{i=1}^n\frac{q_{n,\tht_n(h_n),\zeta}}
    {q_{n,\tht_0,\zeta}}(X_i)-1\Biggr|\\
  &= Q_{n,\tht_0,\zeta}^n\Biggl|
    \int_{\tht_0}^{\tht_n(h_n)}\sum_{i=1}^ng_{n,\tht',\zeta}(X_i)
    \prod_{j=1}^n\frac{q_{n,\tht',\zeta}}{q_{n,\tht_0,\zeta}}(X_j)\,d\tht'\Biggr|\\
  &\leq \int_{\tht_0-\frac{M}{\sqrt{n}}}^{\tht_0+\frac{M}{\sqrt{n}}}
    Q_{n,\tht',\zeta}^n\Bigl|\sum_{i=1}^ng_{n,\tht',\zeta}(X_i)\Bigr|\,d\tht'\\
  &\leq \sqrt{n}\int_{\tht_0-\frac{M}{\sqrt{n}}}^{\tht_0+\frac{M}{\sqrt{n}}}
    \sqrt{Q_{n,\tht',\zeta}(g_{n,\tht',\zeta})^2}\,d\tht',
\end{split}
\]
with use of the Cauchy-Schwartz inequality. For
large enough $n$, $\rho_n<\rho$ and the square-root of (\ref{eq:Udomcond}) 
dominates the difference between $U(\rho,h_n)$ and $1$.
\end{proof} 

\begin{proof}{\bf of lemma~\ref{lem:translate}}
Let $(h_n)$ and $(\rho_n)$ be given. Denote $\tht_n=\tht_n(h_n)$,
$E_n=D(\tht_n,\rho_n)$ and $F_n=D(\tht_0,\rho_n)$ for all $n\geq1$.
Since,
\[
  \Bigl|\,\Pi_H(E_n) - \Pi_H(F_n)\,\Bigr|
  \leq \Pi_H\bigl((E_n\cup F_n)\setminus(E_n\cap F_n)\bigr),
\]
we consider the sequence of symmetric differences. Note that, since
the submodels $\scrP_n$ are sLAN, we have
$d_H\bigl(\eta_n(\tht_n(h_n)),\eta_0\bigr) = O(n^{-1/2})$. Fix some
$0<\alpha<1$; for all $\eta\in E_n$,
\[
  d_H(\eta,\eta_0)
  \leq d_H(\eta,\eta^*(\tht_n))
    + d_H(\eta^*(\tht_n),\eta_n(\tht_n))
    + d_H(\eta_n(\tht_n),\eta_0),
\]
which is dominated by $(1+\alpha)\rho_n$ for large enough $n$,
in accordance with (\ref{eq:approxhell}). As a result,
$E_n\cup F_n\subset D(\tht_0,(1+\alpha)\rho_n)$.
Furthermore, for any $\eta\in D(\tht_0,(1-\alpha)\rho_n)$,
\[
  \begin{split}
  d_H(\eta,\eta^*(\tht_n))
    &\leq d_H(\eta,\eta_0) + d_H(\eta_0,\eta_n(\tht_n))
      + d_H(\eta_n(\tht_n),\eta^*(\tht_n))\\
    &\leq (1-\alpha)\rho_n + o(\rho_n),
  \end{split}
\]
so that $D(\tht_0,(1-\alpha)\rho_n)\subset E_n\cap F_n$ for large
enough $n$. Therefore,
\[
  (E_n\cup F_n)\setminus(E_n\cap F_n) \subset
    D(\tht_0,(1+\alpha)\rho_n) \bigr)\setminus D(\tht_0,(1-\alpha)\rho_n)
    \rightarrow \emptyset,
\]
for large enough $n$, which implies (\ref{eq:stab}).
\end{proof}

\begin{proof}{\bf of theorem~\ref{thm:ilanone}}
Throughout this proof $G_n(h,\zeta)=\
\sqrt{n}\,h^T\PP_ng_{n,\zeta}-\frac12 h^T I_{n,\zeta} h$,
for all $h$ and all $\zeta$. Furthermore, we abbreviate
$\tht_n(h_n)$ to $\tht_n$, $D(\tht_n,\rho_n)$ to $D_n$ and omit
explicit notation for $(X_1,\ldots,X_n)$-dependence in several
places. Let $\delta,\ep>0$ be
given and let $\tht_n=\tht_0+n^{-1/2}h_n$ with $(h_n)$ bounded in
$P_0$-probability. Then there exists a constant $M>0$ such that
$P_0^n(\|h_n\|>M)<\ft12\delta$ for all $n\geq1$. With $(h_n)$ bounded,
the assumption of consistency under $n^{-1/2}$-perturbation says that, 
\[
  P_0^n\Bigl(\,
    \log \Pi\bigl(\,D_n\bigm|\,
      \tht=\tht_n\,;\,X_1,\ldots,X_n\,\bigr)\geq-\ep\, \Bigl)
  > 1-\ft12\delta,
\]
for large enough $n$. This implies that the posterior's numerator 
and denominator are related through,
\begin{equation}
  \label{eq:intD}
  \begin{split}
  P_0^n\biggl(\,
    \int_H \prod_{i=1}^n&\frac{p_{\tht_n,\eta}}{p_{\tht_0,\eta_0}}(X_i)
    \,d\Pi_H(\eta)\\
  &\leq e^\ep\,1_{\{\|h_n\|\leq M\}}\int_{D_n}
    \prod_{i=1}^n\frac{p_{\tht_n,\eta}}{p_{\tht_0,\eta_0}}(X_i)\,d\Pi_H(\eta)
  \,\biggr) > 1-\delta.
  \end{split}
\end{equation}
We continue with the integral over $D_n$ under
the restriction $\|h_n\|\leq M$. Next, parametrize the model locally
in terms of $(\tht,\zeta_n)$, \cf\ (\ref{eq:reparan}). Define
$B_n$ as the image of $D_n$ under reparametrization (\ref{eq:reparan})
and $C_n$ by $D(\tht_0,\rho_n)=\eta_0+C_n$ (\ie\ the image of $D(\tht,\rho_n)$
with $\tht=\tht_0$ under any of the reparametrizations (\ref{eq:reparan})
or (\ref{eq:repara})).
\begin{equation}
  \label{eq:repint}
  \int_{D_n}
    \prod_{i=1}^n\frac{p_{\tht_n,\eta}}{p_{\tht_0,\eta_0}}(X_i)
    \,d\Pi_H(\eta)\\
  = \int_{B_n}
    \prod_{i=1}^n\frac{q_{n,\tht_n,\zeta}}{q_{n,\tht_0,0}}(X_i)
    \,d\Pi_n(\zeta|\tht=\tht_n),
\end{equation}
where $\Pi_n(\,\cdot\,|\tht)$ denotes the prior for $\zeta_n$ given
$\tht$, \ie\ $\Pi_H$ translated over $\eta_n(\tht)$. Next we note
that by Fubini's theorem and the domination condition
(\ref{eq:approxU}), there exists a constant $L>0$ such that,
\[
  \begin{split}
  \biggl|\,P_0^n\int_{B_n}
    &\prod_{i=1}^n\frac{q_{n,\tht_n,\zeta}}{q_{n,\tht_0,0}}(X_i)
    \,d\Pi_n\bigl(\zeta\bigm|\tht=\tht_n\bigr)\\
  & \qquad\qquad- P_0^n\int_{C_n}
    \prod_{i=1}^n\frac{q_{n,\tht_n,\zeta}}{q_{n,\tht_0,0}}(X_i)
    \,d\Pi_n\bigl(\zeta\bigm|\tht=\tht_0\bigr)\bigr)\,\biggr|\\
  &\leq L\,
    \Bigl|\,\Pi_n\bigl(\,B_n\bigm|\tht=\tht_n\,\bigr)
      -\Pi_n\bigl(\,C_n\bigm|\tht=\tht_0\,\bigr)\,\Bigr|\\
  &= L \Bigl|\,\Pi_H\bigl(\,D(\tht_n,\rho_n)\,\bigr)
      -\Pi_H\bigl(\,D(\tht_0,\rho_n)\,\bigr)\,\Bigr|,
  \end{split}
\]
for large enough $n$. Lemma~\ref{lem:translate} asserts
that the difference on the \rhs\ of the above display is $o(1)$,
so that,
\begin{equation}
  \label{eq:shiftprior}
  \int_{B_n}
    \prod_{i=1}^n\frac{q_{n,\tht_n,\zeta}}{q_{n,\tht_0,0}}(X_i)
    \,d\Pi_n\bigl(\zeta\bigm|\tht=\tht_n\bigr)
  = \int_{C_n}
    \prod_{i=1}^n\frac{q_{n,\tht_n,\zeta}}{q_{n,\tht_0,0}}(X_i)
    \,d\Pi(\zeta) + o_{P_0}(1),
\end{equation}
where we use the notation $\Pi(A)=\Pi_n(\,\zeta\in A\,|\,\tht=\tht_0\,)$
for brevity. We define for all $\zeta$, $\ep>0$, $n\geq1$ the events
$F_n(\zeta,\ep)=\bigl\{\sup_h|G_n(h,\zeta)-G_n(h,0)|
\leq\ep\bigr\}$. With (\ref{eq:approxU}) as a domination condition,
Fatou's lemma and the fact that $F_n^c(0,\ep)=\emptyset$ lead to,
\begin{equation}
  \label{eq:intQF}
  \begin{split}
  \limsup_{n\rightarrow\infty}
    \int_{C_n} &Q^n_{n,\tht_n,\zeta}\bigl(F_n^c(\zeta,\ep)\bigr)
      \,d\Pi(\zeta)\\
  &\leq \int \limsup_{n\rightarrow\infty} 1_{C_n\setminus\{0\}}(\zeta)\,
    Q^n_{n,\tht_n,\zeta}\bigl(F_n^c(\zeta,\ep)\bigr)\,d\Pi(\zeta)
  =0,
  \end{split}
\end{equation}
(again using (\ref{eq:approxU}) in the last step). Combined with
Fubini's theorem, this suffices to conclude that,
\begin{equation}
  \label{eq:introbn}
  \int_{C_n}
    \prod_{i=1}^n\frac{q_{n,\tht_n,\zeta}}{q_{n,\tht_0,0}}(X_i)
    \,d\Pi(\zeta)
  = \int_{C_n}
    \prod_{i=1}^n\frac{q_{n,\tht_n,\zeta}}{q_{n,\tht_0,0}}(X_i)
    1_{F_n(\zeta,\ep)}\,d\Pi(\zeta) + o_{P_0}(1),
\end{equation}
and we continue with the first term on the \rhs. By stochastic
local asymptotic normality for every $\zeta$,
expansion (\ref{eq:qlan}) of the log-likelihood implies that,
\begin{equation}
  \label{eq:LANintegrand}
  \prod_{i=1}^n \frac{q_{n,\tht_n,\zeta}}{q_{n,\tht_0,0}}(X_i)
  =\prod_{i=1}^n \frac{q_{n,\tht_0,\zeta}}{q_{n,\tht_0,0}}(X_i)
   \,e^{G_n(h_n,\zeta)+R_n(h_n,\zeta)},
\end{equation}
where the rest term is of order $o_{Q_{n,\tht_0,\zeta}}(1)$. Accordingly,
we define, for every $\zeta$, the events $A_n(\zeta,\ep)=\{
|R_n(h_n,\zeta)|\leq\ft12\ep\}$, so that
$Q^n_{\tht_0,\zeta}(A_n^c(\zeta,\ep))\rightarrow0$. Contiguity
then implies that $Q^n_{n,\tht_n,\zeta}(A^c_n(\zeta,\ep))\rightarrow0$
as well. Reasoning as in (\ref{eq:introbn}) we see that,
\begin{equation}
  \label{eq:introan}
  \begin{split}
  \int_{C_n} &\prod_{i=1}^n\frac{q_{n,\tht_n,\zeta}}
    {q_{n,\tht_0,0}}(X_i)\,1_{F_n(\zeta,\ep)}\,d\Pi(\zeta)\\
  &=
    \int_{C_n}\prod_{i=1}^n\frac{q_{n,\tht_n,\zeta}}
      {q_{n,\tht_0,0}}(X_i)\,1_{A_n(\zeta,\ep)\cap F_n(\zeta,\ep)}
      \,d\Pi(\zeta)
    + o_{P_0}(1).
  \end{split}
\end{equation}
For fixed $n$ and $\zeta$ and for all
$(X_1,\ldots,X_n)\in A_n(\zeta,\ep)\cap F_n(\zeta,\ep)$:
\[
  \biggl|\, \log\prod_{i=1}^n\frac{q_{n,\tht_n,\zeta}}
      {q_{n,\tht_0,0}}(X_i) - G_n(h_n,0) \,\biggr|
  \leq 2\ep,
\]
so that the first term on the \rhs\ of (\ref{eq:introan}) satisfies
the bounds,
\begin{equation}
  \label{eq:expcorrection}
  \begin{split}
  &e^{G_n(h_n,0)-2\ep}\int_{C_n}
    \prod_{i=1}^n\frac{q_{n,\tht_0,\zeta}}{q_{n,\tht_0,0}}(X_i)
    \,1_{A_n(\zeta,\ep)\cap F_n(\zeta,\ep)}
    \,d\Pi(\zeta)\\
  &\qquad\leq
    \int_{C_n}\prod_{i=1}^n\frac{q_{n,\tht_n,\zeta}}
      {q_{n,\tht_0,0}}(X_i)
    \,1_{A_n(\zeta,\ep)\cap F_n(\zeta,\ep)}
    \,d\Pi(\zeta)\\
  &\leq
    e^{G_n(h_n,0)+2\ep}\int_{C_n}
    \prod_{i=1}^n\frac{q_{n,\tht_0,\zeta}}{q_{n,\tht_0,0}}(X_i)
    \,1_{A_n(\zeta,\ep)\cap F_n(\zeta,\ep)}
    \,d\Pi(\zeta).
  \end{split}
\end{equation}
The integral factored into lower and upper bounds can be
relieved of the indicator for $A_n\cap F_n$ by reversing the
argument that led to (\ref{eq:introbn}) and (\ref{eq:introan})
(with $\tht_0$ replacing $\tht_n$), at the expense of an
$e^{o_{P_0}(1)}$-factor. Substituting in (\ref{eq:expcorrection})
and using, consecutively, (\ref{eq:introan}), (\ref{eq:introbn}),
(\ref{eq:shiftprior}) and (\ref{eq:intD}) for the bounded
integral, we find,
\[
  e^{G_n(h_n,0)-3\ep+o_{P_0}(1)}\,s_n(0)
  \leq s_n(h_n) \leq
    e^{G_n(h_n,0)+3\ep+o_{P_0}(1)}s_n(0).
\]
According to (\ref{eq:approxscore}) $g_{n,\zeta=0}$ converges to
$\effscore_{\tht_0,\eta_0}$ in $L_2(P_0)$. As a result,
$I_{n,\zeta=0}=\|g_{n,0}\|^2_{P_0,2}$ converges to
$\|\effscore_{\tht_0,\eta_0}\|^2_{P_0,2}=\effFI_{\tht_0,\eta_0}$
and, by Markov's inequality and the boundedness of $h_n$,
\[
  n^{1/2}\PP_nh_n^T(g_{n,0}-\effscore_{\tht_0,\eta_0})
  = \GG_nh_n^T(g_{n,0}-\effscore_{\tht_0,\eta_0})=o_{P_0}(1).
\]
So $G_n(h_n,0)$ differs from the \rhs\ of (\ref{eq:ilan}) only
by an $o_{P_0}(1)$-term and we conclude that (\ref{eq:ilan}) holds.
\end{proof}



\section*{Acknowledgements}
\label{sec:ack} 

The author thanks P.~Bickel, Y.~D. Kim and B.~Knapik and A.~van~der~Vaart
for valuable discussions and suggestions.






\end{document}